\newtheorem{theorem}{Theorem}[section]
\newtheorem{lemma}[theorem]{Lemma}
\newtheorem{corollary}[theorem]{Corollary}
\newtheorem{cor}[theorem]{Corollary}
\newtheorem{proposition}[theorem]{Proposition}
\newenvironment{customthm}[1]
  {\innercustomthm}
  {\endinnercustomthm}
\newtheorem*{thmone}{Theorem~\ref{thm:one}}
\newtheorem*{thmtwo}{Theorem~\ref{thm:two}}
\newtheorem*{thmincidence}{Theorem~\ref{thm:incidence}}
\newtheorem*{thminfty}{Theorem~\ref{thm:infty}}
\theoremstyle{definition}
\newtheorem{definition}[theorem]{Definition}
\newtheorem{remark}[theorem]{Remark}
\newcommand\CCC{\mathcal{C}}
\newcommand\PP{\mathcal{P}}
\newcommand\R{\mathbb{R}}
\newcommand\Z{\mathbb{Z}}
\newcommand\Isom{\text{Isom}}
\newcommand\Stab{\text{Stab}}
\renewcommand\hat{\widehat}
\newcommand{\cD}{\mathcal{D}}
\title{On groups with Schottky set boundary} 
\author[1]{Peter Ha\"issinsky}
\author[2]{Luisa Paoluzzi}
\author[3]{Genevieve S. Walsh}
\affil[1]{Aix-Marseille Univ, CNRS, I2M, Marseille, France,
phaissin@math.cnrs.fr}
\affil[2]{Aix-Marseille Univ, CNRS, I2M, Marseille, France, luisa.paoluzzi@univ-amu.fr  }
\affil[3]{(Corresponding author) Tufts University Dept. of Math, Medford, USA,
genevieve.walsh@tufts.edu}
\date{}
\begin{document} 
\maketitle 

\abstract{ We study relatively hyperbolic group pairs whose boundaries are Schottky sets. We characterize the groups that have boundaries where the Schottky sets have incidence graphs with 1 or 2 components.} 

\vskip 2mm

\noindent\emph{MSC 2020:} Primary 20F67; Secondary 20F65, 20H10, 20E99.

\vskip 2mm

\noindent\emph{Keywords:} Relatively hyperbolic groups, Bowditch boundaries, Kleinian groups, Schottky sets.

\section{Introduction} 
Convergence group actions on the 2-sphere were introduced by Gehring and Martin in \cite{GM1} and have been studied extensively since then.  It was conjectured 
by Martin and Skora in \cite{martin:skora} that every faithful convergence group action $G$ on $S^2$ by orientation preserving homeomorphisms is \emph{covered} by the induced action of a discrete subgroup of $\Isom(\mathbb{H}^3)$ on $S^2$, i.e., there exist a Kleinian group $K$, an isomorphism $\rho : K\to G$ and a degree 1 map $\phi:\hat{\mathbb{C}} \to S^2$ such that
the following diagram commutes:
$$\begin{array}{ccccc}
K & \curvearrowright & \hat{\mathbb{C}}  & \longrightarrow &\hat{\mathbb{C}} \\
& & & & \\
\Big\downarrow \rho  & & \Big\downarrow \phi & & \Big\downarrow \phi\\
& & & & \\
G &\curvearrowright  & S^2 & \longrightarrow & S^2
\end{array}$$
This remains open. This conjecture is closely related to Cannon's conjecture \cite{Cannon91}, which asserts that a 
hyperbolic group with 2-sphere boundary is virtually a discrete subgroup of $\Isom(\mathbb{H}^3)$.
Indeed, if a hyperbolic group $G$ acts faithfully on its Gromov boundary $\partial G$ and $\partial G = S^2$, then Martin and Skora's conjecture posits that its finite index  subgroup of
orientation preserving elements is conjugate to a Kleinian group.  

Here we are dealing with the case of relatively hyperbolic groups, and specifically those whose boundaries are {\it topological Schottky sets}. These are defined and described in Section \ref{sec:topschott}. 
Some familiar examples are the Sierpi\'nski carpet and the Apollonian gasket. 
Our motivation for studying those groups is essentially twofold. Firstly, there are many examples of groups that admit a peripheral structure for which their boundary is a topological Schottky set, cf. Theorem \ref{thm:infty}.
Secondly, the relatively hyperbolic groups with these boundaries are all conjectured to be virtually discrete subgroups of $\Isom(\mathbb{H}^3)$, see \cite{hruska:walsh}. Here we show that many relatively hyperbolic groups with boundaries that are topological Schottky sets are virtually discrete subgroups of $\Isom(\mathbb{H}^3)$. Furthermore, we say which Kleinian groups arise when the boundaries are certain types of topological Schottky sets. 

We say that $(G,\mathcal{P})$ is a {\it relatively hyperbolic group pair} if $(G, \mathcal{P})$ acts as a geometrically finite convergence group on the compactification $X\cup \partial X$ of a hyperbolic space $X$. See section \ref{sec:general} for the detailed definition.  In this case, we say that the Gromov boundary of $X$, $\partial X$, is the Bowditch boundary of $(G, \mathcal{P})$, denoted $\partial(G, \mathcal{P}).$  We also call $\partial(G, \mathcal{P})$ the {\it relatively hyperbolic boundary} or sometimes just ``the boundary".  Throughout, $(G, \mathcal{P})$ is a non-elementary relatively hyperbolic group pair (besides Proposition \ref{prop:elementary} where the classification of elementary convergence groups acting on $S^2$ is provided), which means that $\partial(G, \mathcal{P})$ has more than two points.  

Following \cite{BKM}, we define a \emph{Schottky set} as the complement of at least three  
disjoint open round balls in the $n$-sphere $S^n$, where $S^n$ is equipped with the standard metric as a subset of Euclidean space.  Throughout this paper, we will restrict ourselves to $n=2$, so all our Schottky sets are planar. We actually work with the non-metric analog, which we call {\it topological} Schottky sets. 

Due to the properties of a topological Schottky set $\mathcal{S}$ in Definition \ref{def:schottky}, every $\mathcal{S}$ produces an {\it incidence graph $\Gamma(\mathcal{S})$}, the simplicial graph whose vertices correspond to the open disks $\{D_i\}_{i\in I}$ of its complement in $S^2$, and whose edges correspond to (1-point) incidences between closures of the disks $D_i$.

Our main results are as follows: 

\begin{customthm}{A} 
\label{thm:incidence} 
Let $\mathcal{S}$ be a topological Schottky set with $\mathcal{S} = \partial(G, \mathcal{P})$.   Then the incidence graph $\Gamma(\mathcal{S})$ has 1, 2 or infinitely many components. 
The stabilizer of each component is virtually the deck transformation group of a regular planar covering of a closed surface. 
\end{customthm}

\begin{customthm}{B}
    
 \label{thm:one} Let $\mathcal{S}$ be a topological Schottky set with $\mathcal{S} = \partial(G, \mathcal{P})$.
 
When the incidence graph $\Gamma(\mathcal{S})$ has one component, then $G$ contains a finite index subgroup $H$ that is isomorphic to
a free product of a free group $F_n$ of rank $n\geq 0$ and some finite index subgroups of groups in $\mathcal{P}$. Moreover, the action of the group $H$ on $\mathcal{S}$ is faithful and orientation preserving, and $H$ is covered by a geometrically finite Kleinian group $K$.
\end{customthm} 
\begin{remark} The Apollonian gasket satisfies the hypotheses of both theorems.  In this case, $G$ is always a free group of rank 2 \cite{HPW}.  This is also the covering group of an infinite planar surface covering a genus-2 surface. \end{remark}

From a topological viewpoint, $K$ contains a finite-index torsion-free subgroup that uniformizes a 3-manifold 
obtained by gluing together along compression disks a handlebody and $I$-bundles over surfaces.

 \begin{customthm}{C} \label{thm:two} Let $\mathcal{S}$ be a topological Schottky set with $\mathcal{S} = \partial(G, \mathcal{P})$.
When the incidence graph $\Gamma(\mathcal{S})$  has exactly  2 components, $G$ is virtually a closed surface group. \end{customthm}

In contrast, when the incidence graph has infinitely many components, then the group  is covered by a geometrically finite convergence group that may have a Sierpi\'nski carpet boundary. Showing that these are essentially Kleinian is still a wide open question, even in the word hyperbolic case, cf. \cite{KK}. Note that Theorem \ref{thm:infty} below enables us to construct examples of Schottky limit sets that have infinitely many components of their incidence graphs but that do not come from a Sierpi\'nski carpet limit set. For example, apply the theorem to a geometrically finite Kleinian group that contains a rank-2 accidental parabolic fixed point (see for instance the first example in \cite{Brock}, illustrated by Figure 6 therein).
So far, all the examples we know of with Sierp\'inski carpet boundary are virtually fundamental groups of hyperbolic 3-manifolds with totally geodesic boundary (which may have cusps), and this is consistent with conjectures in \cite{KK} and \cite{hruska:walsh}. 

 \begin{customthm}{D}
     \label{thm:infty}
  Let $K$ be a geometrically finite Kleinian group with non-empty domain of discontinuity. Then there is a peripheral structure  $\mathcal{P}_{K'}$ on a finite index subgroup $K'$ of $K$, such that $(K', \mathcal{P}_{K'})$  is a relatively hyperbolic group pair and $\partial(K', \mathcal{P}_{K'})$ is a topological Schottky set. 
 Moreover, $\mathcal{P}_{K'}$ contains 
 the natural peripheral structure of the Kleinian group $K'\subset K$.
 \end{customthm}

In Section \ref{sec:general} we prove some general facts about relatively hyperbolic groups, generalizing some theorems about hyperbolic boundaries to relatively hyperbolic boundaries. In Section \ref{sec:S2} 
we restrict to relatively hyperbolic groups 
that are geometrically finite convergence groups acting on $S^2$. 
Although the results in this section will be used later in the context of topological Schottky sets, they do not only apply to this specific context.
In Section \ref{sec:blow} we describe how to ``blow up" 2-ended peripheral subgroups in geometrically finite groups acting on $S^2$.  This will change the peripheral structure, but not the group; moreover the group with its new peripheral structure is shown to admit again a convergence action on the 2-sphere. In Sections \ref{sec:topschott} and \ref{sec:incidence} we introduce and discuss topological Schottky sets and their incidence graphs, and prove Theorem \ref{thm:incidence}. Finally in Section \ref{sec:one} we prove Theorem \ref{thm:one}, and in Section \ref{sec:more} we prove Theorems \ref{thm:two} and \ref{thm:infty}. 

\subsection*{Acknowledgements} We thank the CIRM (Centre International de Rencontres Math\'ematiques) in Luminy, Marseille,  where this work began. The first author was partially supported by ANR-22-CE40-0004 GoFR. The third author was partially supported by the NSF, through DMS 2005353 and 2405033. We thank the anonymous referees for constructive comments that helped the authors improve this text.

\section{Relative hyperbolicity and relative quasiconvexity}
\label{sec:general} 
Here we provide some results about general relatively hyperbolic groups and their boundaries. References on metric spaces in the sense of Gromov include \cite{ghys:delaharpe:groupes, bridson:haefliger}.
Let $G$ be a finitely generated  group
and a family $\mathcal{P}$ of subgroups consisting of finitely many conjugacy classes. 

Let us first recall that a \emph{convergence group} $G$ is a group of homeomorphisms of a compact metric space $Z$ such that any sequence $(g_n)_n$ of distinct elements contains a convergent subsequence, i.e., up to a subsequence, there are  two points $a$ and $b$ in $Z$ so that $(g_n)$ tends uniformly to the constant map $a$ on compact subsets of $Z\setminus\{b\}$.
One may then define the limit set $\Lambda_G$ as the set of limit points $a$ of all convergence 
sequences in $G$. It is a compact invariant subset of $Z$. Its complement, $\Omega_G$, is the ordinary set: the action of $G$ on $\Omega_G$ is properly discontinuous, see \cite{GM1} for more properties. 
Note that any discrete group of isometries on a geodesic, proper, hyperbolic space $X$ admits a convergence action on $X\cup \partial X$.

\begin{definition}[\cite{bowrelhyp}] \label{def:original rel hyp}  The pair $(G,\mathcal{P})$ is \textit{relatively hyperbolic} if $G$ acts on $X$ properly discontinuously and by isometries, where $X$ is a proper hyperbolic geodesic metric space such that:

\begin{enumerate}
	\item each point of $\partial X$ is either a \emph{conical limit point} or a \emph{bounded parabolic point}. 
	\item $\mathcal{P}$ is exactly the collection of maximal \emph{parabolic subgroups}.
\end{enumerate}

A conical limit point is a point $y \in \partial X$ such that
there exists a sequence $(g_i)$ in $G$  and distinct points $a, b \in \partial X$, such that $g_i(y) \rightarrow a$ and $g_i(z) \rightarrow b$, for all $z \in \partial X \setminus \{y\}$. A parabolic point $y_P$ is a point with an infinite stabilizer that fixes no other point, i.e., the fixed point of a parabolic subgroup $P$. It is  bounded if $(\partial X \setminus \{y_P\})/P$ is compact. 
Whenever we have a properly discontinuous action by isometries and these two conditions are satisfied, we say $(G, \mathcal{P})$ acts \textit{geometrically finitely} on $X$.  If $(G, \mathcal{P})$ is a relatively hyperbolic pair, then $\partial( G, \PP)\ = \partial X $ is its \textit{Bowditch boundary}, or \textit{relatively hyperbolic boundary}.  This depends on $\mathcal{P}$, but is well-defined for the pair $(G, \mathcal{P})$. 

\end{definition} 

There are many equivalent definitions of relatively hyperbolic groups, see \cite{Hruskasurvey}.  In particular, a relatively hyperbolic group admits a {\it cusp-uniform} action on a hyperbolic metric space $X$.  By definition, this action is cocompact on the complement of a $G$-invariant collection of horoballs in $X$, which are centered at points of the boundary of the hyperbolic metric space $X$.  See \cite[Chapter 3]{BuyaloS}.

As we will be using topological properties of Bowditch boundaries, we recall two topological notions that will be used several times.

\begin{definition}[Null sequences and $E$-sets]\label{def:nses}
Given a compact metric space $Z$, a \emph{null-sequence} is a collection of subsets $\mathcal{C}$ such that, for any $\delta >0$, the collection $\mathcal{C}$ contains at most finitely many elements of diameter at least $\delta$.

An \emph{$E$-set} is a connected compact subset of the sphere $S^2$ such that the collection of connected components of its complement is a null-sequence.
\end{definition}

 Next we recall the definition of a relatively quasi-convex subgroup of a relatively hyperbolic group pair, which is a generalization of the notion of a quasi-convex subgroup of a hyperbolic group.  This definition is from \cite{Hruskasurvey}, where its equivalence to several other definitions is proven. 
\begin{definition} Let $H$ be a subgroup of a group $G$ such that $(G, \mathcal{P})$ is a relatively hyperbolic group pair.  We say that $H$ is relatively quasiconvex if every point of the limit set of $H$ is either a conical limit point or a bounded parabolic point for the action of $H$. That is, let $H \cap \mathcal{P} = \lbrace H \cap P: P \in \mathcal{P}, |H\cap P| = \infty \rbrace$. Then $(H, H \cap \mathcal{P})$ is a relatively hyperbolic group pair. 
\end{definition}

 We emphasize that there are many different definitions. In particular, subgroups which are relatively quasiconvex are {\it dynamically quasiconvex} in the sense of \cite[Definition 4.9]{GPdyn}, which implies point (1) in Proposition \ref{prop:nullseq} below.  This characterizes a subgroup as relatively quasiconvex by looking at the orbit of the limit set of the subgroup.  We provide an elementary proof for the convenience of the reader. 

\begin{proposition}\label{prop:nullseq} Let $(G,\PP)$ be relatively hyperbolic.
\begin{enumerate}
\item If $K$ is the limit set of a relatively quasiconvex subgroup, then the set of elements in the orbit $GK$ forms a null-sequence. 

\item Let $\CCC$ be a $G$-invariant collection of compact subsets of $\partial( G,\PP)$ which defines a null-sequence, where each element of $\CCC$ contains more than one point. 
Then $\CCC/G$ is finite and, for any  perfect set $K\in\CCC$, $\Stab (K)$ is a relatively quasiconvex subgroup with
limit set $K$. 
\end{enumerate}
\end{proposition}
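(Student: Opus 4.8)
The plan is to reduce both parts to known facts about relatively quasiconvex subgroups and the dynamics of geometrically finite convergence actions. For part (1), let $H$ be the relatively quasiconvex subgroup with limit set $K = \Lambda_H$. Since $H$ is relatively quasiconvex, $K$ is a proper closed subset of $\partial(G,\mathcal{P})$, and I would first note that $K$ cannot be all of $\partial(G,\mathcal{P})$ unless $H$ has finite index, in which case there is nothing to prove (the orbit is a single point up to finitely many, hence trivially a null-sequence). Assuming $K \neq \partial(G,\mathcal{P})$, pick a point $z_0 \in \partial(G,\mathcal{P}) \setminus K$. The key input is the convergence property: if infinitely many translates $g_n K$ had diameter bounded below by some $\delta > 0$, then after extracting a subsequence the $g_n$ would converge uniformly on compacta of $\partial(G,\mathcal{P}) \setminus \{b\}$ to a constant $a$, for some pair $a,b$. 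One then argues that all but finitely many of the $g_n K$ must meet a fixed small neighborhood of $a$ (because $K$ is compact and, for all but finitely many $n$, $K$ avoids the shrinking neighborhood of $b$ that we are allowed to excise), forcing their diameters to $0$ — a contradiction. The one subtlety is handling the finitely many $g_n$ for which $K$ does hit the neighborhood of $b$; this is standard and uses that $\{b\}$ is a single point while $K$ is compact, so only boundedly many distinct cosets $g_n H$ can place $K$ near $b$. This gives the null-sequence conclusion.

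For part (2), the finiteness of $\mathcal{C}/G$ is essentially a pigeonhole/compactness argument: since $\mathcal{C}$ is a null-sequence, only finitely many elements of $\mathcal{C}$ have diameter $\geq \delta_0$ for a suitable threshold $\delta_0$ (chosen relative to a fixed cocompact-on-the-complement structure near the parabolic points), and every $G$-orbit must contain such a large element — otherwise an entire orbit consists of arbitrarily small sets, which contradicts the convergence dynamics (translates of a fixed set of positive diameter cannot all be small, by the same escaping argument as in part (1), since translates accumulate only at limit points and one can pull back to recover diameter). So each orbit is represented among the finitely many large elements, whence $\mathcal{C}/G$ is finite. For the second assertion, fix a perfect $K \in \mathcal{C}$ and let $H = \Stab(K)$. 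Since $K$ is perfect it is infinite, so $H$ is infinite; I would show $\Lambda_H = K$ by a two-sided inclusion. The inclusion $\Lambda_H \subseteq K$ follows because $K$ is closed and $H$-invariant. For $K \subseteq \Lambda_H$: if some $x \in K$ were not in $\Lambda_H$, then since $\mathcal{C}$ is a null-sequence and $GK$ is a $G$-orbit in it, the translates $g K$ form a null-sequence (this is the link back to part (1) applied in reverse, or directly from the null-sequence hypothesis), and one uses the standard characterization — a closed invariant set on which the stabilizer acts is the limit set precisely when the orbit of the set is a null-sequence and the set is ``big enough'' — to pin down $\Lambda_H = K$. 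Finally, relative quasiconvexity of $H$ follows from a criterion (e.g. the dynamical characterization of relative quasiconvexity in terms of the limit set and the induced action being geometrically finite) once we know $\Lambda_H = K$ and that the collection $GK$ is a null-sequence, which controls the coarse geometry.

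The main obstacle, I expect, is the second half of part (2): cleanly establishing that $\Stab(K)$ is \emph{relatively} quasiconvex, not merely that it has limit set $K$. In the relatively hyperbolic setting one must be careful about how the parabolic subgroups of $G$ interact with $K$ — a component of a null-sequence could conceivably accumulate at a parabolic point of $\partial(G,\mathcal{P})$, and one needs to verify that the induced peripheral structure on $H$ (the $H$-conjugacy classes of $P \cap H$ for $P \in \mathcal{P}$ intersecting $K$ in an infinite set) makes $(H, \mathcal{P}_H)$ into a relatively hyperbolic pair with Bowditch boundary $K$. I would handle this by invoking the characterization of relative quasiconvexity via the action on the ``cusped space'' or via the property that $H$ acts geometrically finitely on the convex hull of $K$, checking that conical and bounded-parabolic points of $K$ for the $H$-action are inherited from those of $\partial(G,\mathcal{P})$ for the $G$-action — conical points transparently, and bounded parabolic points using that $K$ is a null-sequence element so that $(K \setminus \{p\})/\Stab_H(p)$ is compact because only finitely many $G$-translates of comparable size crowd near $p$. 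The null-sequence hypothesis is exactly what prevents pathologies here, so the proof should go through, but the bookkeeping around peripheral structures is where the real work lies.
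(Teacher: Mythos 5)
Your part (1) rests on a purely dynamical argument that does not close. The convergence property gives $g_n\to a$ uniformly on compact subsets of $\partial(G,\PP)\setminus\{b\}$, and your conclusion that $\mathrm{diam}\,g_n(K)\to 0$ is only valid when $b\notin K$. But the case that matters is precisely $b\in K$: since $K$ and $b$ are fixed, your phrase ``for all but finitely many $n$, $K$ avoids the shrinking neighborhood of $b$'' does not parse --- either $b\in K$ or it is not --- and when $b\in K$ (e.g.\ $b\in\Lambda_H=K$) the translate $g_n(K)$ decomposes as a piece collapsing to $a$ together with $g_n\bigl(K\cap B(b,\epsilon)\bigr)$, whose diameter the convergence property does not control at all. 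Your proposed patch, that ``only boundedly many distinct cosets $g_nH$ can place $K$ near $b$,'' is exactly the statement that needs to be proved, and it is here that relative quasiconvexity must enter as a geometric hypothesis; the convergence action alone cannot see it. The paper's proof works instead in a proper hyperbolic space $X$ on which $(G,\PP)$ acts geometrically finitely: a translate $g(K)$ of diameter at least $\delta$ admits a geodesic passing within $R$ of the basepoint $o$; choosing the invariant horoballs deep enough (distance at least $R+1$ from $o$), that near point lies outside the horoballs, so relative quasiconvexity places it within $q$ of $gHo$; hence each large translate yields a coset representative $g_L$ with $g_L(o)\in B(o,R+q)$, and proper discontinuity bounds the number of such cosets. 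That geometric step is the missing idea in your sketch.

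Part (2) is closer in spirit to the paper (finitely many orbits because any pair of distinct points can be $m$-separated by a group element, so every orbit meets the finite set $\CCC_m$; then relative quasiconvexity of $\Stab(K)$ via the dynamical characterization, checking conical and bounded parabolic points), but two steps are glossed over. First, ``$K$ perfect $\Rightarrow$ $\Stab(K)$ infinite'' is not immediate; the paper extracts it from the dynamics by showing that each conical (resp.\ parabolic) point of $K$ is already conical (resp.\ parabolic) for $\Stab(K)$, the key move being to use the null-sequence hypothesis to pass from a sequence $(g_n)$ in $G$ with $\mathrm{diam}\,g_n(K)$ bounded below to a subsequence with $g_n(K)$ constant, so that $g_1^{-1}g_n\in\Stab(K)$. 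Second, there is no ``standard characterization'' of the limit set of the form you invoke; the equality $\Lambda_{\Stab(K)}=K$ falls out of that same conical/parabolic verification (those points automatically lie in the limit set) rather than being established beforehand. Your treatment of bounded parabolic points via finiteness of large translates near $p$ is essentially the paper's argument and is fine.
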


\begin{proof}
Let us first consider a 
geometrically finite action of the group $G$ on a proper geodesic hyperbolic metric space $X$ so that
the stabilizers of the parabolic points are the elements of $\PP$. We may then identify $\partial X$ with $\partial(G,\PP)$ and endow
it with a visual distance seen from a base point $o\in X$. 

Let $H$ be a relatively quasiconvex subgroup of the relatively hyperbolic group $(G, \mathcal{P})$. We will prove that the orbit of its limit set $\Lambda_H = K$ forms a null sequence.  See \cite[Corollary 2.5]{GMRS} for the hyperbolic case. 

Fix $\delta >0$ and
let $R>0$ denote the upper bound on the distances from the origin $o$ to  any geodesic joining points $\delta$-apart in the boundary. 
Let us pick a  $G$-invariant collection of horoballs $\mathcal{H}$  in $X$ centered at the set of parabolic points 
in such a way  that they are   
pairwise disjoint and that their distance
to $o$ is at least $R+1$ (by shrinking if necessary). By abuse of notation, we will also let $\mathcal{H}$ denote the union of the horoballs of the collection.
Let $\CCC$ denote the set of translates $g(K)$ of diameter at least $\delta$ and assume that $K\in\CCC$.  Since $H$ is relatively quasiconvex,
there is some $q>0$ so that, for any geodesic $\gamma$  joining two points in $K$, $\gamma\cap (X\setminus \mathcal{H})$ is contained in the $q$-neighborhood of $Ho$, \cite[Definition 6.6]{Hruskasurvey}.

If $L=g(K)\in\CCC$, then we may find a geodesic $\gamma$ joining two points in $K$ such that $g(\gamma)$ is at distance at most $R$ from $o$. Since the horoballs are $G$-invariant and at distance at least $R+1$ from $o$, by the previous observation about $\gamma$, we may find a point of $g(\gamma)$ at distance at most $R$ from the origin and at distance at most $q$ from $gHo$. Thus, there exists $g_L\in gH$ such that $g_L(o)\in B(o,R+q)$.
 Since the action
of $G$ is properly discontinuous, there are finitely many elements $g\in G$ with $g(o)\in B(o,R+q)$, hence finitely many $L\in\CCC$. This shows  that $GK$ is a null-sequence.

\medskip

 We now establish point 2. 
 Let $m>0$ be such that any distinct pair of points of $\partial(G, \PP)$ can be $m$-separated by an element of $G$, i.e., for any $x,y\in\partial(G,\PP)$, $x\neq y$,
 there is some $g\in G$ such that $d(g(x),g(y))\ge m$.  Such $m$ exists since the action on the set of distinct pairs is cocompact, see \cite{TukiaConical}. 
Given $\delta>0$, we let $\CCC_\delta$ denote the subset 
of elements $K$ of $\CCC$ such that $\hbox{diam}\,K\ge \delta$; this set is  finite since $\CCC$ is a null-sequence
and non-empty for small enough $0<\delta \le m$. 

For all $K\in\CCC$, we can find two points 
$x_1,x_2\in K$ and a group element $g\in G$ such that $\{g(x_1),g(x_2)\}$ is $m$-separated: this implies
that $g(K)\in\CCC_m$, so that $\CCC$ is composed of finitely many orbits.

Let $K\in\CCC$ be a perfect compact set. Since $G_K=\Stab\ (K)$ is a subgroup of $G$, its action on the set of distinct triples of $K$ is automatically properly discontinuous.
Let us prove that it is also geometrically finite.  

Let $x,y\in K$, $x\neq y$, and assume that $x$ is conical for $G$. Let $(g_n)$ be a sequence of $G$ such that
$(g_n(x))_n$ tends to $a$ and $(g_n(y))_n$ tends to $b\ne a $.   This means that for all $n$ large enough $\hbox{diam}\,g_n(K)$ is larger than some constant $\delta>0$ (for instance $\delta=d(a,b)/2$)
so belongs to a finite subcollection of $\CCC$.   Extracting a subsequence if necessary, we may assume that $g_n(K)=L$ for some $L\in\CCC$. 
It follows that $h_n= g_1^{-1}g_n$  defines a sequence of $G_K$ such that $(h_n(x))$ tends to $g_1^{-1}(a)$ and $(h_n(y))$ tends to $g_1^{-1}(b)$ for all other
points $y$. This means that $x$ is conical for $G_K$.

 If $x\in K$ is parabolic, denote by $G_x$ its stabilizer and let $L$ be a compact fundamental domain for the action of $G_x$
on $\partial(G,\PP) \setminus\{x\}$. We first prove that $G_x\cap G_K$ is infinite, establishing that $x$ is a also a parabolic point for $G_K$. Since $x$ is non-isolated in $K$, we may find a sequence $(x_n)_n$ in $K$ which
tends to $x$ and a sequence  $(g_n)$ in $G_x$ so that $g_n(x_n)\in L$. The collection $(g_n)_n$ is infinite 
and  $\hbox{diam}\,g_n(K)$ is at least $d(x,L)>0$
so $g_n(K)$ belongs to a finite subcollection  $\CCC_L$. Extracting a subsequence if necessary, we may assume that $g_n(K)$ is a fixed  compact subset so that $(g_1^{-1}g_n)_n$ is an infinite
sequence in $G_x\cap G_K$. We will now prove that $x$ is also bounded as a parabolic point for $G_K$.  Let us label the elements of $\CCC_L$  by  $\{K_1,\ldots, K_N\}$ and let us  fix, for each index $j\in\{1,\ldots, N\}$,
an element $h_j\in G_x$ such that $h_j(K)=K_j$. Set $L_K= \cup_{1\le j\le N} h_j^{-1}(L)$ that is compact in $\partial(G,\PP) \setminus \{x\}$. 
For any $y\in K\setminus\{x\}$, we may find $g\in G_x$ so that $g(y)\in L$; note that $g(y)\in K_j$ for some $j\in\{1,\ldots, N\}$, implying that $h_j^{-1}g(y)\in L_K$. This shows that $x$
is a bounded parabolic point. Thus, any point in $K$ is either conical or bounded parabolic for $G_K$, so that $G_K$ is geometrically finite with limit set $K$.
\end{proof}

Another group feature we can identify from the relatively hyperbolic boundary is splittings of the group. We will express a splitting of a group in terms of an action of the group on a simplicial tree with finite edge stabilizers and without edge inversions.
A splitting is said to be {\it relative}
to a certain collection of subgroups if every subgroup in this collection
fixes a vertex of the tree. It is {\it non-trivial} if no vertex of the tree is
fixed by the whole group. 

Given a group $G$ with an action on a simplicial tree $T$ with no edge inversions, we let $\Gamma=T/G$ be the orbit space. For each vertex $v$ of $\Gamma$ we may consider  a vertex group $G_v$  defined
as the stabilizer of a representative of the vertex in $v$. In the same manner, we define edge groups $G_e$ for edges. The action of $G$ on $T$ provides us with 
 injective maps $\phi_{0,e}: G_e \rightarrow G_v$, $\phi_{1,e}:G_e \rightarrow G_v$  defined whenever $e(0)$ or $e(1)$ is  $v$.
 
 We say the tuple $\mathcal{G}= (\Gamma, \lbrace G_v \rbrace, \lbrace G_e \rbrace, \lbrace \phi_{\epsilon, e} \rbrace)$ is a graph of groups, and $G$ is
  the {\it fundamental group of the graph of groups} $\mathcal{G}$. The set of generators of $G$ is the union of the sets of generators for all the $G_e$ and the $G_v$, together with a set containing a generator $t_e$ for each edge of $\Gamma$. The relations are all the relations in each $G_e$ and $G_v$, $t_e = 1$ if $e$ is in a fixed maximal tree, $t_e^{-1} = t_{\bar e}$ and $t_e \phi_{0,e}(x) t_e^{-1} = \phi_{1,e}(x)$ for all $x \in G_e$.

  \bigskip
  
If the Bowditch boundary of a relatively hyperbolic group consists of more than one component, then Bowditch showed that the group must split. More precisely the following holds.

\begin{theorem}\cite[Theorem 10.1]{bowrelhyp} \label{thm:bow10.1}
The boundary $\partial \Gamma$  of a relatively hyperbolic group, $\Gamma$, is connected if and only if $\Gamma$ does not split non-trivially over any finite subgroup relative to peripheral subgroups. \end{theorem}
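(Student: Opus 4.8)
This is \cite[Theorem~10.1]{bowrelhyp}; we indicate how one proves it. Throughout fix a proper geodesic hyperbolic space $X$ on which $(\Gamma,\PP)$ acts geometrically finitely (Definition~\ref{def:original rel hyp}), together with a $\Gamma$-invariant family of pairwise disjoint horoballs $\mathcal H$ centred at the parabolic points and whose stabilisers are the conjugates of the $P\in\PP$, as in the proof of Proposition~\ref{prop:nullseq}; recall that $\Gamma$ then acts cocompactly on $X\setminus\mathcal H$ (a standard feature of geometrically finite actions). The plan is to relate connectivity of $\partial\Gamma=\partial X$ to a coarse-separation property of $X$, and then to invoke a version of Stallings' ends theorem relative to $\PP$.

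For the direction ``$\Gamma$ splits $\Rightarrow$ $\partial\Gamma$ disconnected'', suppose $\Gamma$ acts minimally and without global fixed point on a simplicial tree $T$ with finite edge stabilisers and with every $P\in\PP$ elliptic. Since the peripherals are elliptic, one obtains a $\Gamma$-equivariant collapse map $f\co X\to T$ for which the preimage of each edge midpoint is a \emph{bounded} subset of $X$ (this is where finiteness of the edge groups enters). Fix an edge $e$, and write $T\setminus\{\mathrm{mid}\,e\}=T^{+}\sqcup T^{-}$. Then $X=X^{+}\sqcup f^{-1}(\mathrm{mid}\,e)\sqcup X^{-}$ with $X^{\pm}=f^{-1}(T^{\pm})$, the middle piece is bounded, and both $X^{\pm}$ are unbounded since $T$ is infinite and minimal; hence their limit sets $U^{\pm}\subseteq\partial X$ are non-empty, disjoint, clopen, and cover $\partial X$. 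So $\partial\Gamma$ is disconnected.

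The converse ``$\partial\Gamma$ disconnected $\Rightarrow$ $\Gamma$ splits'' is the substantive one. First I would note that, $X$ being proper and hyperbolic, disconnectedness of $\partial X$ is equivalent to the existence of a bounded set $K\subseteq X$ whose complement has at least two deep components --- that is, to $\Gamma$ having more than one end relative to $\PP$. Averaging such a coarse bisection over the cocompact $\Gamma$-action on $X\setminus\mathcal H$ --- Dunwoody's resolution/track argument, carried out relative to the horoballs --- yields a $\Gamma$-invariant discrete family of nested separating sets, hence a non-trivial $\Gamma$-action on a simplicial tree with finite edge stabilisers. It then remains to check the splitting is relative to $\PP$: each horoball $B\in\mathcal H$ is one-ended, so no bounded set cuts it into two deep pieces, and therefore $\Stab(B)$ --- equivalently, each conjugate of a $P\in\PP$ --- must fix a vertex of the tree.

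The hard part is precisely this last direction: upgrading a single non-trivial relative coarse bisection of $X$ to an honest $\Gamma$-tree with \emph{finite} edge groups and with the horoballs left undivided. This is a Stallings--Dunwoody ends/accessibility theorem relative to $\PP$, which Bowditch establishes via his theory of discrete tree-like (pretree) structures; a self-contained proof has to reprove, or cite, that accessibility input.
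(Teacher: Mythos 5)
The paper does not prove this statement at all --- it is quoted verbatim as \cite[Theorem 10.1]{bowrelhyp}, so there is no internal argument to compare yours against. Your sketch is a faithful outline of Bowditch's actual strategy: the direction ``splitting $\Rightarrow$ disconnected boundary'' via an equivariant collapse to the Bass--Serre tree with bounded midpoint preimages is correct as written (the clopenness of $U^{\pm}$ following from the fact that geodesics joining the two limit sets must cross the bounded middle set, so the relevant Gromov products are bounded), and the converse is, as you say, exactly a relative ends/Stallings--Dunwoody statement that Bowditch establishes with his pretree machinery. Since you explicitly flag that the accessibility input must be cited rather than reproved, the sketch has no hidden gap; it simply is not, and does not claim to be, self-contained --- which is the appropriate level of detail for a result the paper itself only cites.
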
 

In the case where the group splits, we have the following description which is again due to Bowditch. 

\begin{theorem} \cite[Theorem 10.3]{bowrelhyp} \label{thm:bow10.3}
Suppose a relatively hyperbolic group pair splits as a graph of groups with finite edge groups and relative to the peripheral subgroups. Then each vertex group is hyperbolic relative to the peripheral subgroups that it contains
and its boundary  is naturally identified as a closed subset of the boundary of the whole group.\end{theorem}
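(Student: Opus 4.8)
The plan is to identify each non-elementary vertex group $G_v$ of the splitting with a \emph{relatively quasiconvex} subgroup of $(G,\PP)$ carrying precisely the induced peripheral structure $\PP_v := \{\, gPg^{-1}\le G_v \mid P\in\PP,\ g\in G \,\}$ (up to $G_v$-conjugacy), and then to feed this into Proposition~\ref{prop:nullseq} together with the structure theory of relatively quasiconvex subgroups. First I would fix the Bass--Serre tree $T$ of the graph of groups (which we may take to be finite and reduced); since the edge groups are finite and the splitting is relative to $\PP$, every edge stabilizer is finite and each $P\in\PP$ is elliptic in $T$, fixing a single vertex, so each $P$ is conjugate into a unique vertex group, and $\PP_v$ makes sense as stated.

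The main --- and I expect the only genuinely difficult --- step is to show that each $G_v=\Stab(v)$ is relatively quasiconvex in $(G,\PP)$. I would argue through the tree-of-spaces picture: the coned-off Cayley graph $\hat G$ of $(G,\PP)$ (hyperbolic by \cite{bowrelhyp}) is $G$-equivariantly quasi-isometric to a tree of spaces over $T$ whose vertex space over $u$ is the relative Cayley graph $\hat{G_u}$ of $G_u$ relative to the peripheral subgroups it contains, and whose edge spaces are \emph{bounded} (coning off does nothing to the finite edge groups). In such a tree of spaces each vertex space is quasiconvex: a geodesic between two of its points makes only excursions of uniformly bounded length into the other pieces, because each excursion enters and returns through a bounded edge space. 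Transporting via the quasi-isometry, $\hat{G_v}$ is quasiconvex in the hyperbolic graph $\hat G$, hence $\hat{G_v}\hookrightarrow\hat G$ is a quasi-isometric embedding, i.e.\ $G_v$ is undistorted relative to the peripherals --- one of the standard forms of relative quasiconvexity (cf.\ \cite[Definition~6.6]{Hruskasurvey}). Finiteness of the graph of groups makes all constants uniform over the finitely many conjugacy classes of vertex groups. The same bounded-edge-space estimate shows that distinct vertices have disjoint limit sets (sequences in $G_v$ and $G_w$ converging in $\partial(G,\PP)$ to a common point would have unbounded Gromov products in $\hat G$, impossible once the finitely many edge spaces along $[v,w]$ are bounded), so $\Stab(\Lambda(G_v))=G_v$.

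Granting relative quasiconvexity, the rest is bookkeeping. The induced peripheral structure of $G_v$ consists of the $G_v$-conjugacy classes of infinite subgroups $G_v\cap gPg^{-1}$; since $gPg^{-1}$ fixes a unique vertex $w$, one has $G_v\cap gPg^{-1}\le G_v\cap G_w$, finite unless $w=v$, so these are exactly the peripheral subgroups contained in $G_v$ and the induced structure is $\PP_v$. Hence $(G_v,\PP_v)$ is relatively hyperbolic by the structure theory of relatively quasiconvex subgroups; equivalently, Proposition~\ref{prop:nullseq}(1) makes $G\cdot\Lambda(G_v)$ a null-sequence, so the union $\CCC$ of these orbits over the conjugacy classes of non-elementary vertex groups is a $G$-invariant null-sequence of perfect compacta, and Proposition~\ref{prop:nullseq}(2) gives that $\Stab(\Lambda(G_v))=G_v$ acts geometrically finitely with limit set $\Lambda(G_v)$; the topological characterization of relatively hyperbolic groups (Yaman, Bowditch) then exhibits $(G_v,\PP_v)$ as relatively hyperbolic with Bowditch boundary $G_v$-equivariantly homeomorphic to $\Lambda(G_v)$, which is compact and so closed in $\partial(G,\PP)$ --- the asserted natural identification. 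The degenerate vertex groups (finite, two-ended, or peripheral) are immediate, being elementary, trivially hyperbolic relative to the peripherals they contain, with $\Lambda(G_v)$ a closed set of at most two points (empty when $G_v$ is itself peripheral). Thus the whole argument funnels into the relative quasiconvexity of the vertex groups; everything afterwards is conjugacy-class bookkeeping plus one topological point --- that the subspace topology $\Lambda(G_v)$ inherits from $\partial(G,\PP)$ coincides with its intrinsic Bowditch topology --- which is exactly what the null-sequence property supplies.
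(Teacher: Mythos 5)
The paper offers no proof of this statement: it is quoted verbatim from Bowditch's \emph{Relatively hyperbolic groups} as \cite[Theorem 10.3]{bowrelhyp}, so there is nothing internal to compare your argument against. Judged on its own terms, your reconstruction is essentially correct and follows the standard route: the entire content is the relative quasiconvexity of the vertex groups, obtained from the tree-of-spaces decomposition with uniformly bounded edge spaces (finite edge groups), after which the identification of the induced peripheral structure and of the limit set as the Bowditch boundary is bookkeeping via Susskind--Swarup, the null-sequence machinery of Proposition~\ref{prop:nullseq}, and the Yaman/Bowditch topological characterization. Bowditch's own proof runs the analogous construction directly on a fine hyperbolic graph for $(G,\PP)$, building from it a fine graph for each vertex group; your version, routed through the cusped/coned-off space and Hruska's formulation, buys compatibility with the quasiconvexity language this paper actually uses (in particular it plugs straight into Proposition~\ref{prop:nullseq}).

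Two points deserve more care than your sketch gives them. First, quasiconvexity in the coned-off Cayley graph alone does not in general imply relative quasiconvexity in the sense of \cite[Definition~6.6]{Hruskasurvey}, which is a statement about geodesics in the cusped space staying near the orbit \emph{outside the horoballs}; you should run your excursion argument in the cusped space itself (this works, since each peripheral coset and hence each horoball lies in a single vertex space, so the cusped space is again a tree of spaces over $T$ with the same bounded edge sets), rather than deducing the cusped-space condition from the coned-off one. Second, your Gromov-product argument for disjointness of limit sets of distinct vertices is too quick: by Proposition~\ref{suswarup} the intersection $\Lambda(G_v)\cap\Lambda(G_w)$ could a priori contain common parabolic points even when $G_v\cap G_w$ is finite. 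The clean way to exclude this is the conical/bounded-parabolic dichotomy for the relatively quasiconvex subgroup $G_w$: a parabolic point of $G$ lying in $\Lambda(G_w)$ cannot be conical for $G_w$ (it would be conical for $G$) and cannot be parabolic for $G_w$ unless its $G$-stabilizer meets $G_w$ in an infinite subgroup, which forces its fixed vertex to be $w$. Neither issue changes the architecture of your proof.
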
 

The following proposition 
is an immediate consequence of our Proposition \ref{prop:nullseq} and the above discussion and Theorems~\ref{thm:bow10.1} and \ref{thm:bow10.3} by Bowditch. 

\begin{proposition}\label{prop:infends} The set of components of the Bowditch boundary of a relatively hyperbolic group $(G,\PP)$ 
forms a null-sequence. Moreover, for each 
component containing at least two points, the stabilizer is hyperbolic relative to conjugates of the original
peripheral subgroups $\PP$. \end{proposition}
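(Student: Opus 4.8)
The plan is to treat the connected and disconnected cases separately, reducing the disconnected case to Bowditch's structure results (Theorems~\ref{thm:bow10.1} and~\ref{thm:bow10.3}) together with both conclusions of Proposition~\ref{prop:nullseq}. If $\partial(G,\PP)$ is connected there is essentially nothing to do: the family of components is the single set $\partial(G,\PP)$, which is a null-sequence by the observation above, and the only non-trivial component is the whole boundary, whose stabilizer is $G$, hyperbolic relative to $\PP$ by hypothesis.

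So suppose $\partial(G,\PP)$ is disconnected. By Theorem~\ref{thm:bow10.1}, $G$ splits non-trivially over a finite subgroup relative to $\PP$; by accessibility of relatively hyperbolic groups (Bowditch, \cite{bowrelhyp}) I would pass to a maximal such splitting, realised by a finite graph of groups with Bass--Serre tree $T$ and finite edge groups. By Theorem~\ref{thm:bow10.3} each vertex group $G_v$ is hyperbolic relative to the conjugates of members of $\PP$ that it contains, and $\partial G_v$ embeds as a closed subset of $\partial(G,\PP)$; since $G_v$ admits no further splitting over finite subgroups relative to its peripheral structure, Theorem~\ref{thm:bow10.1} applied to $G_v$ shows that $\partial G_v$ is connected whenever it has more than one point, and empty or a single point otherwise. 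Being a vertex group of a splitting over finite edge groups relative to $\PP$, $G_v$ is relatively quasiconvex, and $\partial G_v$ is its limit set. Finally, the finite edge groups have empty limit sets, which forces distinct $G$-translates of the sets $\partial G_v$ to be pairwise disjoint; as each is closed and connected, a routine point-set argument then identifies the non-degenerate (more-than-one-point) components of $\partial(G,\PP)$ with precisely the $G$-translates of those $\partial G_v$ that have more than one point. In particular there are only finitely many $G$-orbits of such components.

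Now each non-degenerate component is the limit set $\partial G_v$ of the relatively quasiconvex subgroup $G_v$, so Proposition~\ref{prop:nullseq}(1) makes each orbit $G\cdot\partial G_v$ a null-sequence, and a finite union of null-sequences is a null-sequence; hence the collection $\CCC$ of non-degenerate components of $\partial(G,\PP)$ is a $G$-invariant null-sequence (the remaining components are single points and cause no harm). A connected compact metric space with more than one point has no isolated points, so every $K\in\CCC$ is perfect, and Proposition~\ref{prop:nullseq}(2) yields that $\Stab(K)$ is relatively quasiconvex with limit set $K$; a relatively quasiconvex subgroup is itself relatively hyperbolic, with peripheral structure given by its infinite intersections with conjugates of $\PP$ (cf.\ \cite{Hruskasurvey}). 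It remains only to check these intersections are full conjugates. If $P=gP_0g^{-1}$ with $P_0\in\PP$ has parabolic fixed point $p$, and $p\in K$, then $P$ fixes $p$ and hence preserves the component through $p$, namely $K$, so $P\subseteq\Stab(K)$. If instead $p\notin K$, any infinite subgroup of $\Stab(K)\cap P$ would have limit set $\{p\}$, so some sequence in it would contract $K$ onto $p$, forcing $\operatorname{diam} K=0$ and contradicting $|K|\ge 2$; thus $\Stab(K)\cap P$ is finite. Hence the peripheral structure of $\Stab(K)$ consists precisely of conjugates of members of $\PP$, which (being those of a relatively hyperbolic group) fall into finitely many conjugacy classes.

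The step I expect to be the main obstacle is the identification of the components of $\partial(G,\PP)$ with vertex-boundary translates --- in particular, excluding a component built by ``chaining together'' infinitely many vertex boundaries along an end of $T$. This is where maximality of the splitting (forcing the $\partial G_v$ to be connected), the emptiness of the limit sets of the finite edge groups (forcing disjointness of distinct translates), and closedness of each translate all get used; granting it, everything else is a direct application of Proposition~\ref{prop:nullseq} and standard convergence dynamics.
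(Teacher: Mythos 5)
Your argument is correct and is essentially the paper's own: the proposition is stated there as an immediate consequence of Proposition~\ref{prop:nullseq} together with Bowditch's Theorems~\ref{thm:bow10.1}, \ref{thm:bow10.3} and the accessibility statement (Theorem~\ref{thm:bow10.2}), which is exactly the decomposition you carry out, with Proposition~\ref{prop:nullseq}(1) giving the null-sequence on each of the finitely many orbits and Proposition~\ref{prop:nullseq}(2) (or Theorem~\ref{thm:bow10.3} directly) giving the stabilizers. The ``chaining'' worry you flag is handled by applying the disconnection underlying Theorem~\ref{thm:bow10.1} to each edge of the Bass--Serre tree: since the edge groups are finite with empty limit sets, every edge yields a partition of the boundary into two clopen pieces separating the vertex-boundary translates on either side, so no component can meet two distinct translates.
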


While the boundary of a relatively hyperbolic group is not always connected and sometimes contains cut points, the structure of cut points allows us to rule out a dendrite boundary: 

\begin{lemma}\label{cor:nodendrite} 
Let $(G, \mathcal{P})$ 
be a geometrically finite convergence group. 
Then $\partial(G, \mathcal{P})$ is not a dendrite.
\end{lemma}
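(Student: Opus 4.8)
The plan is to argue by contradiction: suppose $\partial(G,\mathcal{P})$ is a dendrite. A dendrite is a connected, locally connected compact metric space containing no simple closed curve; in particular it is connected, so by Theorem~\ref{thm:bow10.1} the group $(G,\mathcal{P})$ does not split non-trivially over a finite subgroup relative to $\mathcal{P}$. A dendrite that is not a single point always contains cut points, and in fact its set of non-cut points (the endpoints) is dense; more to the point, any dendrite with at least two points has at least two endpoints, and removing a cut point disconnects it. I want to leverage the convergence-group dynamics against this abundance of cut points.

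The key dynamical input is Bowditch's analysis of cut points in boundaries of (relatively) hyperbolic groups: if $\partial(G,\mathcal{P})$ is connected and has a cut point, then either the group splits over a two-ended subgroup relative to $\mathcal{P}$ (which, combined with the dendrite structure, would have to be detectable), or a peripheral/parabolic point is a cut point, or there is a cut point whose stabilizer is ``large.'' The cleanest route is: in a dendrite, since the action of $G$ is a convergence action and the boundary has no non-separating proper subcontinuum structure beyond arcs, one can locate a canonical fixed cut point or a canonical finite invariant set. Concretely, a dendrite has a well-defined notion of a "center" or, failing a global fixed point, the action on the dendrite has bounded orbits of branch points of low valence, yielding a finite $G$-invariant set; its stabilizer has finite index, and the fixed point of that finite-index subgroup is then fixed by a finite-index subgroup of $G$. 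That contradicts the non-elementarity hypothesis (the Bowditch boundary has more than two points, and a non-elementary convergence group cannot fix a point with finite-index stabilizer — indeed cannot fix a single point at all after passing to finite index, since limit sets of non-elementary convergence groups are perfect).

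More carefully, here is the skeleton I would write. First, reduce to the connected case as above. Second, recall that a non-degenerate dendrite $D$ admits no arc-fixing or arc-reversing obstruction: the group acts on $D$ by homeomorphisms, and one shows $G$ fixes a point or an arc or a finite set using the tree-like structure (the branch points of valence $\ge 3$, together with the combinatorics of how arcs meet, give a $G$-invariant real tree or simplicial tree structure, and $G$ acts on it). Third, if $G$ fixes a point $p\in D$, then a finite-index subgroup fixes $p$; but by Proposition~\ref{prop:nullseq}(2) or the basic theory of convergence groups, the only points with infinite stabilizer that fix no other point are bounded parabolic points, and a finite-index subgroup fixing $p$ forces $p$ to be a global parabolic fixed point with $\partial(G,\mathcal{P})\setminus\{p\}$ compact — forcing $G$ itself to be (virtually) that parabolic group, hence elementary, contradiction. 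If instead $G$ stabilizes an arc $\alpha$, its endpoints form a $G$-invariant (or index-2-invariant) pair of points, again forcing $G$ elementary after passing to finite index. If $G$ stabilizes a finite set $F$, pass to the finite-index stabilizer of a point of $F$ and argue as before. In every case we contradict non-elementarity, so $\partial(G,\mathcal{P})$ is not a dendrite.

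The main obstacle is producing the $G$-invariant point, arc, or finite set from the dendrite structure in a way that is clean and citable: one must be careful that the action is only by homeomorphisms (not isometries of a canonical metric), so the argument must use the purely topological tree structure of a dendrite — the order-2 cut points, branch points, and the induced $G$-action on the (possibly non-locally-finite) "dendrite skeleton." I expect to invoke a standard fact that a group acting on a dendrite either fixes a point or an end/arc, or has a finite invariant set (this is the dendrite analogue of the fixed-point theorem for group actions on trees, and appears in the literature on dendrite actions), and then the convergence-dynamics step ruling out these fixed objects is routine given non-elementarity and Definition~\ref{def:original rel hyp}.
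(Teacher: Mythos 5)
There is a genuine gap, and it sits exactly at the step you flag as the ``main obstacle.'' The dichotomy you want to invoke --- that a group acting on a dendrite must fix a point, stabilize an arc, or have a finite invariant set --- is not a theorem. The actual state of the art (Duchesne--Monod, \emph{Group actions on dendrites and curves}) is a Tits alternative: a group acting on a dendrite either has a finite orbit of cardinality at most $2$ \emph{or contains a non-abelian free subgroup}, and in the latter case there are plenty of minimal actions with no finite orbit (e.g.\ free groups acting minimally on Wa\.zewski dendrites). A non-elementary relatively hyperbolic group contains free subgroups, so it lands squarely in the branch where no invariant point, arc, or finite set is guaranteed, and your contradiction never materializes. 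The auxiliary idea of extracting a $G$-invariant simplicial or real tree from the branch points is also not salvageable in general: the branch points can be dense and of infinite order, and the resulting structure carries no fixed-point theory you can lean on. So the proposal as written cannot be completed; it would prove that no non-elementary group acts on a dendrite at all, which is false.

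The paper's proof goes a completely different and much shorter way, and the input you should have reached for is the cut point theorem rather than a fixed-point theorem. By Dasgupta--Hruska (\cite{Dasgupta:Hruska}, Theorem 1.1), every cut point of $\partial(G,\mathcal{P})$ is a parabolic point, hence there are at most countably many cut points. On the other hand, a non-degenerate dendrite contains an open arc consisting entirely of cut points: take a path $p$ from $x$ to $y$; if some $z$ on $p$ were not a cut point, then $L\setminus\{z\}$ would be a connected, locally connected generalized continuum, hence path-connected, giving a second path from $x$ to $y$ avoiding $z$ and therefore a simple closed curve in $L$, which is absurd in a dendrite. An arc of cut points is uncountable, contradicting countability. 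Your instinct that ``cut points should clash with the dynamics'' was right, but the clash is a cardinality count against parabolicity of cut points, not a fixed-point argument.
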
 

Recall that a \emph{dendrite} is a connected, locally connected, compact metric space containing at least two points that
admits no simple closed curve.

\begin{proof}
 According to \cite[Theorem 1.1]{Dasgupta:Hruska}, every cut point of $\partial(G, \mathcal{P})$ is a parabolic point. This readily implies that there are at most countably many cut points in $\partial(G, \mathcal{P})$.
 To reach the desired conclusion, we shall show that a dendrite contains an open path of cut points. To see this, let $L$ be a dendrite. Then $L$ is path connected according to \cite[II.5.1]{whyburnantop}, since it is a locally connected complete metric space. Let $x$ and $y$ be distinct points in $L$ and $p$ a path between them. Remove a point $z$ on $p$.  If $z \in L$ is a not a cut point, $L \setminus \{z\}$ is connected. Since $L\setminus\{z\}$ is connected and locally compact, i.e. a \emph{generalized continuum}, the fact that it is locally connected implies that it is path-wise connected \cite[II.5.2]{whyburnantop}. 
 Thus there is another path $p'$ from $x$ to $y$ that misses $z$. Then the set $\{r\in[0,1] \mid p'(r) \in p([0, 1])\}$ is closed in $[0, 1]$ and not all of $[0, 1]$ so its complement contains an open interval, and this gives us a loop in $L$, which is absurd.
\end{proof}

Lemma \ref{cor:nodendrite} can also be derived using Theorem   1.2 of \cite{Dasgupta:Hruska}. 

\medskip

The
statement of the next proposition is due to Susskind and Swarup for geometrically finite Kleinian groups \cite[Thm 3]{susskind:swarup}. 
The same argument applies to relatively hyperbolic groups.

\begin{proposition}[Susskind and Swarup]\label{suswarup} 
Let $(G,\PP)$ be relatively hyperbolic and $H,K$ be two
relatively quasiconvex subgroups. Then  $H\cap K$ is relatively quasiconvex and
$ \Lambda_H\cap \Lambda_K= \Lambda_{H\cap K} \cup P$ where $P$ is a (possibly empty) 
discrete set of common parabolic points.\end{proposition}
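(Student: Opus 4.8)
The plan is to adapt the Susskind–Swarup argument to the relatively hyperbolic setting, using only the quasiconvexity machinery available for geometrically finite actions. First I would fix a geometrically finite action of $(G,\PP)$ on a proper geodesic hyperbolic space $X$ with a $G$-invariant system of pairwise disjoint horoballs $\mathcal H$ centered at the parabolic points, and identify $\partial X$ with $\partial(G,\PP)$. Recall that relative quasiconvexity of a subgroup $H$ means (as in \cite[Definition 6.6]{Hruskasurvey}) that for some constant $q$, every geodesic in $X$ with endpoints in $\Lambda_H$ meets $X\setminus\mathcal H$ only inside the $q$-neighborhood of an orbit $Ho$. The inclusion $\Lambda_{H\cap K}\cup P \subseteq \Lambda_H\cap\Lambda_K$ is immediate: $\Lambda_{H\cap K}\subseteq\Lambda_H\cap\Lambda_K$ since $H\cap K\le H$ and $H\cap K\le K$, and any common parabolic fixed point $p$ of $H$ and $K$ lies in $\Lambda_H\cap\Lambda_K$ by definition. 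So the content is the reverse inclusion together with the assertion that $H\cap K$ is relatively quasiconvex with the claimed limit set.

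The main step is to show that if $H$ and $K$ are relatively quasiconvex then so is $H\cap K$. I would do this via the standard coarse-geometry argument: take a point $x\in\Lambda_H\cap\Lambda_K$ which is \emph{not} parabolic, i.e. conical. Choose a geodesic ray $\gamma$ from $o$ to $x$. By relative quasiconvexity, every sub-segment of $\gamma$ lying outside $\mathcal H$ is within $q_H$ of $Ho$ and within $q_K$ of $Ko$; using the conicality of $x$ to control how $\gamma$ enters and exits the horoballs (a conical point is approached by $\gamma$ along a path that spends only bounded time deep in each horoball, or returns to the thick part infinitely often), I would find an unbounded sequence of points on $\gamma$ each simultaneously close to $Ho$ and to $Ko$. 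Two elements $h\in H$, $k\in K$ with $ho,ko$ within bounded distance of a common point of $\gamma$ satisfy $d(ho,ko)\le C$, and since the $G$-action is properly discontinuous there are only finitely many possible values of $h^{-1}k$; by pigeonhole, infinitely many of these are equal to a fixed $g_0$, whence $hg_0^{-1}\cdot h^{-1}\in H\cap K$ — more carefully, $h_n k_n^{-1}=h_m k_m^{-1}$ gives $h_m^{-1}h_n = k_m^{-1}k_n\in H\cap K$, producing infinitely many elements of $H\cap K$ whose orbit tracks $\gamma$, so $x\in\Lambda_{H\cap K}$. This simultaneously shows that the tracking is uniform (bounded in terms of $q_H,q_K,C$), which is exactly relative quasiconvexity of $H\cap K$ restricted to conical points; the parabolic points of $H\cap K$ are then handled automatically, and any point of $\Lambda_H\cap\Lambda_K$ which is parabolic for $G$ but not in $\Lambda_{H\cap K}$ must be a common parabolic fixed point of $H$ and $K$, contributing to the set $P$.

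Finally I would check that $P$ is discrete: $P$ consists of parabolic points of $G$, each stabilized by an infinite subgroup of both $H$ and $K$, and since the parabolic points of $G$ form a discrete subset of $\partial(G,\PP)$ away from the rest of the limit set in the appropriate sense — more precisely, distinct $G$-orbits of parabolic points are finite in number and within a single orbit two parabolic points cannot accumulate on a third parabolic point — the set $P$ has no accumulation point inside itself. The hardest part is the coarse-geometry estimate controlling the behavior of the geodesic ray to a conical limit point as it passes through horoballs; this is where the relatively hyperbolic case genuinely differs from the Kleinian one, and I would lean on the characterization of conical limit points in terms of geodesics staying boundedly far from the thick part infinitely often, together with the fact that the horoballs in $\mathcal H$ are pairwise disjoint, to make the two quasiconvexity conditions interact. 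Everything else is a routine transcription of \cite[Thm 3]{susskind:swarup}.
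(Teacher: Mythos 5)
The paper offers no written proof of this proposition: it cites \cite[Thm 3]{susskind:swarup} and asserts that the argument transfers verbatim, so your sketch is an attempt to supply exactly the transfer the authors have in mind. Its core is right: a point of $\Lambda_H\cap\Lambda_K$ that is not parabolic for $G$ is conical, its geodesic ray returns to $X\setminus\mathcal H$ at an unbounded sequence of times (a connected tail contained in the union of pairwise disjoint horoballs would lie in a single horoball and converge to its parabolic center), each return point is simultaneously within $q_H$ of $Ho$ and within $q_K$ of $Ko$, and pigeonholing the finitely many values of $h_n^{-1}k_n$ produces elements of $H\cap K$ certifying $x\in\Lambda_{H\cap K}$. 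Two small repairs there: the element whose orbit point stays near $\gamma(t_n)$ is $h_nh_m^{-1}=k_nk_m^{-1}$, not $h_m^{-1}h_n$ (whose orbit tracks $h_m^{-1}\gamma$); and the quasiconvexity constant for $H\cap K$ depends on a choice of coset representatives $a_g\in H\cap Kg^{-1}$ for the finitely many $g$ with $d(o,go)\le C$, not only on $q_H,q_K,C$. You should also record why every non-conical point of $\Lambda_H\cap\Lambda_K$ is a \emph{common} parabolic point: a $G$-parabolic point in $\Lambda_H$ cannot be conical for $H$ (conicality passes to overgroups, and parabolic points of a geometrically finite action are never conical), so $\Stab_G(p)\cap H$ and $\Stab_G(p)\cap K$ are both infinite.

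The genuine gap is your justification that $P$ is discrete. The claim that ``within a single orbit two parabolic points cannot accumulate on a third parabolic point'' is false: for a non-elementary geometrically finite action with parabolics, every orbit of parabolic points is dense in the limit set, so parabolic points accumulate on one another everywhere. The correct statement is that $P$ has no accumulation point in $(\Lambda_H\cap\Lambda_K)\setminus\Lambda_{H\cap K}$, and its proof is another run of the same pigeonhole argument rather than any general separation property of parabolic points. If $p_n\in P$ are distinct and converge to $q$ (which lies in the closed set $\Lambda_H\cap\Lambda_K$), the horoballs of $\mathcal H$ centered at the $p_n$ are pairwise disjoint and only finitely many of them meet any bounded subset of $X$, so the entry point $x_n$ of the geodesic $[o,p_n]$ into the horoball at $p_n$ satisfies $d(o,x_n)\to\infty$ while $x_n$ converges to $q$ in $X\cup\partial X$. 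Each $x_n$ lies on the boundary of $\mathcal H$, hence is within $q_H$ of $Ho$ and within $q_K$ of $Ko$, and pigeonholing exactly as in the conical case yields elements of $H\cap K$ whose orbit points converge to $q$, forcing $q\in\Lambda_{H\cap K}$. Without this (or some substitute), the ``discrete'' clause of the proposition is unproved.
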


See \cite[Theorem 1.2(2)]{Hruskasurvey} for the result that the intersection of two relatively quasiconvex subgroups is relatively quasiconvex, and \cite{Yang} for the result about the limit set.

\medskip

Together with Theorem \ref{thm:bow10.3} above, we will rely on one more result regarding splittings, again by Bowditch.

\begin{theorem}\cite[Theorem 10.2]{bowrelhyp}\label{thm:bow10.2}
Any relatively hyperbolic group pair can be expressed as the fundamental group of a finite graph of groups with finite edge groups and with every peripheral subgroup conjugate into a vertex group, with the property that no vertex group splits non-trivially over any finite subgroup relative to the peripheral subgroups.
\end{theorem}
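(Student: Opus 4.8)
The plan is to obtain the decomposition by iterated refinement, using Theorem~\ref{thm:bow10.1} to recognise when a vertex group can no longer be split and the component structure of the Bowditch boundary to run an accessibility argument that forces the refinement process to terminate. (If $G$ is elementary --- finite or two-ended --- the statement is immediate, so assume $(G,\PP)$ is non-elementary.)

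Begin with the trivial graph of groups: one vertex carrying $G$, no edges, into which every $P\in\PP$ is vacuously conjugate. Suppose inductively we have reached a finite graph of groups decomposition of $G$ with finite edge groups and every peripheral subgroup elliptic, and that some vertex group $G_v$ splits non-trivially over a finite subgroup relative to the peripheral subgroups $\PP_v$ it contains. A finite group acting on a tree fixes a point, so every edge group incident to $v$ is elliptic in the Bass--Serre tree of this splitting of $G_v$; hence by Bass--Serre theory we may refine the decomposition at $v$, replacing $v$ by the graph of groups of $G_v$, keeping all edge groups finite and all peripherals elliptic. By Theorem~\ref{thm:bow10.3}, $(G_v,\PP_v)$ is itself relatively hyperbolic and $\partial(G_v,\PP_v)$ is a closed subset of $M:=\partial(G,\PP)$ --- in fact, because all edge groups are finite, a union of connected components of $M$, the finite edge groups contributing nothing at infinity. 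By Theorem~\ref{thm:bow10.1} the vertex $v$ admits no further refinement exactly when $\partial(G_v,\PP_v)$ is connected (the finitely many elementary vertex groups that may occur being dealt with by hand). So the entire content is termination.

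For termination I would use the component structure of $M$: by Proposition~\ref{prop:infends} its connected components form a null-sequence, by Proposition~\ref{prop:nullseq}(2) they lie in finitely many $G$-orbits, and the stabiliser of each non-singleton component is relatively hyperbolic with that (connected) component as boundary, hence already terminal. A refinement step replaces a non-terminal $G_v$ --- whose boundary is a proper union of at least two components of $M$ --- by groups whose boundaries are again proper unions of components, and the natural invariant to track is a well-founded complexity built from the finite multiset of $G$-orbits of non-singleton components carried by the current vertices, together with a control of the ``finite part'' of the decomposition (the sub-graph-of-groups where every incident group is finite). Each non-trivial refinement should strictly decrease this complexity; when it can no longer drop, the resulting finite graph of groups has finite edge groups, every peripheral elliptic, and every vertex group terminal, i.e.\ not splitting over a finite subgroup relative to $\PP$.

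The main obstacle is precisely this accessibility/termination step. A vertex boundary can a priori meet infinitely many components of $M$, and the ``finite part'' must itself be shown accessible, so the complexity has to be set up with care. Two routes I would pursue are the following: (a) use that a relatively hyperbolic pair $(G,\PP)$ is finitely presented relative to $\PP$ and acts cocompactly on a suitable neutered space, reducing to Dunwoody's accessibility of finitely presented groups in its version relative to a peripheral structure; or (b) feed the disconnected compactum $M$ with its $G$-action directly into Dunwoody's structure-tree machinery to extract the required $G$-tree in one step, the null-sequence and finite-orbit properties forcing the quotient graph of groups to be finite. One must also keep all splittings relative to the original $\PP$: each $P\in\PP$ fixes a parabolic point of $M$ and therefore preserves the component of $M$ containing it, so $P$ is conjugate into the stabiliser of that component (or into a vertex group of the finite part, when the component is a singleton), i.e.\ $P$ stays elliptic at every refinement.
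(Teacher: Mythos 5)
The paper does not prove this statement; it is quoted verbatim from Bowditch \cite[Theorem 10.2]{bowrelhyp}, so there is no internal proof to compare against. Judged on its own terms, your proposal has a genuine gap, and it is exactly the one you flag yourself: the termination of the refinement process. Everything up to that point is fine and standard (refining at a vertex whose group splits over a finite subgroup relative to its peripherals, keeping incident finite edge groups elliptic, using Theorem~\ref{thm:bow10.1} to recognise terminal vertices via connectedness of $\partial(G_v,\PP_v)$). But the complexity you propose --- the multiset of $G$-orbits of non-singleton components of $M$ carried by the vertices --- cannot drive the induction. In the most basic case, $G$ free and $M$ a Cantor set, every component of $M$ is a singleton, so this multiset is empty from the start and never decreases; \emph{all} of the splitting happens in what you call the ``finite part,'' whose accessibility is the entire content of the theorem in that case. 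More generally, even when non-singleton components exist, a refinement can split off a vertex with totally disconnected boundary without changing your multiset at all. So the invariant does not strictly decrease at each step, and termination is not established.

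The two rescue routes you sketch also need more care than you give them. Route (a) invokes Dunwoody accessibility of finitely presented groups, but a relatively hyperbolic pair $(G,\PP)$ in the sense of this paper has arbitrary finitely generated peripherals, so $G$ need not be finitely presented, and one must work with accessibility of \emph{pairs} relative to $\PP$ (finitely presented relative to $\PP$), a statement you assert but do not justify; recall that finitely generated groups can be inaccessible (Dunwoody), so some finiteness beyond generation is genuinely required. Route (b) --- extracting a $G$-tree directly from the null-sequence of components of $M$ via structure-tree machinery, with finitely many orbits forcing a finite quotient graph --- is essentially how Bowditch actually argues, and is the route I would push you toward; but as written it is a pointer to a theory rather than an argument, and the key step (that the tree so produced has finite edge stabilisers and finitely many orbits of edges, and that its vertex stabilisers have connected boundary, hence are terminal by Theorem~\ref{thm:bow10.1}) is precisely what needs to be carried out. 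As it stands, the proposal is a correct reduction of the theorem to an unproved accessibility statement, not a proof.
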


We obtain in this way

\begin{cor}\label{cor:bow} Suppose $(G,\mathcal{P})$ is a relatively hyperbolic group pair and  $\partial(G, \mathcal{P})$ is a Cantor set. 
The group $G$ is the fundamental group of a finite graph of groups  where all the edge groups are finite, and each vertex group is either finite or a peripheral group.  
\end{cor}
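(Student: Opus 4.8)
The plan is to combine Theorem \ref{thm:bow10.2} with Theorem \ref{thm:bow10.3} and then analyze the vertex groups using the fact that the boundary is a Cantor set. First I would apply Theorem \ref{thm:bow10.2} to express $G$ as the fundamental group of a finite graph of groups with finite edge groups, with every peripheral subgroup conjugate into a vertex group, and with the property that no vertex group splits non-trivially over a finite subgroup relative to its peripheral subgroups. By Theorem \ref{thm:bow10.3}, each vertex group $G_v$ is hyperbolic relative to the peripheral subgroups it contains, and its Bowditch boundary $\partial G_v$ embeds as a closed subset of $\partial(G,\mathcal{P})$. Since $\partial(G,\mathcal{P})$ is a Cantor set, every closed subset is either empty, a finite set, or a Cantor set; in particular $\partial G_v$ is totally disconnected, hence $\partial G_v$ is connected only if it has at most one point.

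Next I would invoke the ``no splitting'' property of the vertex groups together with Theorem \ref{thm:bow10.1}: since $G_v$ does not split non-trivially over a finite subgroup relative to its peripheral subgroups, its boundary $\partial G_v$ is connected. Combined with the previous paragraph, $\partial G_v$ has at most one point. If $\partial G_v$ is empty, then $G_v$ is finite (a relatively hyperbolic pair with empty boundary is finite, since the action of $G_v$ on $X$ has bounded orbits). If $\partial G_v$ is a single point, then that point must be a bounded parabolic point — it cannot be conical, as a conical limit point requires at least two points in the boundary — so $G_v$ is the stabilizer of a parabolic point, i.e. $G_v$ is (conjugate into) a peripheral subgroup, and since it is a vertex group with a single peripheral subgroup conjugated into it and equal to its whole boundary stabilizer, $G_v$ itself is that peripheral subgroup (or a subgroup commensurable with it, but being maximal parabolic it is exactly a peripheral group). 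This gives the stated dichotomy for the vertex groups, and the edge groups are finite by construction, completing the proof.

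The main point requiring care is the last step: one must argue cleanly that a vertex group $G_v$ whose Bowditch boundary is a single point really is a peripheral subgroup of $\mathcal{P}$, not merely a parabolic group of some ambient action. This uses that $(G_v, \mathcal{P}_v)$ is relatively hyperbolic with $\mathcal{P}_v$ the peripheral subgroups contained in $G_v$ (by Theorem \ref{thm:bow10.3}), that a one-point boundary forces the single element of $\mathcal{P}_v$ to be all of $G_v$ (since $\partial(G_v,\mathcal{P}_v)$ is the single bounded parabolic point fixed by that peripheral subgroup, and $G_v$ fixes it too, so $G_v$ lies in the maximal parabolic, which is the peripheral subgroup itself), and that by Theorem \ref{thm:bow10.2} this peripheral subgroup is conjugate (indeed, after adjusting the graph of groups, equal) to a member of $\mathcal{P}$. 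I expect the handling of the empty-boundary case (finite vertex groups) and the bookkeeping that the edge groups coming from Theorem \ref{thm:bow10.2} are finite to be entirely routine.
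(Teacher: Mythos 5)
Your proposal is correct and follows essentially the same route as the paper: apply Theorem \ref{thm:bow10.2}, use Theorem \ref{thm:bow10.3} to realize each vertex-group boundary as a closed (hence totally disconnected) subset of the Cantor set, and use Theorem \ref{thm:bow10.1} together with the no-further-splitting property to force each such boundary to be a point or empty, whence each vertex group is peripheral or finite. You simply spell out in more detail the final identification of a one-point-boundary vertex group with a peripheral subgroup, which the paper leaves implicit.
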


\begin{proof} We apply Theorem \ref{thm:bow10.2} to express $G$ as the fundamental group of a finite graph of groups with finite edge groups and with every peripheral subgroup conjugate into a vertex group, with the property that no vertex group splits non-trivially over any finite subgroup relative to the peripheral subgroups. Theorem \ref{thm:bow10.1} tells us that this graph of groups is non-trivial since the boundary
is disconnected. Since it is totally disconnected, Theorem \ref{thm:bow10.3} implies that a vertex group is the stabilizer of a point or 
{finite}, so each vertex group is either conjugate to a peripheral subgroup or  finite. 
 \end{proof}

\begin{theorem} \label{thm:Cantor} Let $(G, \mathcal{P})$ be a relatively hyperbolic group pair with $\partial(G, {\mathcal{P}})$ homeomorphic to a Cantor set.  Assume that for all $P \in {\mathcal{P}}$, $P$ is residually finite.  Then $G$ is virtually $F *(*_1^n P_i)$ where $F$ is free (possibly of rank 0) and each $P_i$ is a finite index subgroup of some $P \in \mathcal{P}$. \end{theorem}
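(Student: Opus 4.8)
The plan is to start from Corollary \ref{cor:bow}, which expresses $G$ as the fundamental group of a finite graph of groups $\mathcal{G}$ with finite edge groups and with each vertex group either finite or (conjugate to) a peripheral subgroup $P\in\mathcal{P}$. The goal is then to pass to a finite-index subgroup that splits as a genuine free product of the required form, and the natural tool is a standard result on virtually free splittings: a group acting on a tree with finite edge stabilizers has a finite-index subgroup that is a free product of finitely many conjugates of (finite-index subgroups of) the vertex stabilizers together with a free group, provided one can promote the finite edge groups away. First I would reduce to the case where $G$ is finitely generated — which it is, by hypothesis — so that Bass--Serre theory applies and the Stallings--Swan / Karrass--Pietrowski--Solitar theory of virtually free groups and, more generally, the Dunwoody-type accessibility of the splitting are available; finiteness of the graph of groups is exactly what makes this work.

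The key step is a residual-finiteness argument to kill the finite edge groups. Since each peripheral $P_i$ is residually finite by hypothesis and each finite vertex group is trivially residually finite, and since $G$ is an amalgam of residually finite groups over finite subgroups, $G$ itself is residually finite (this is classical for finite graphs of groups with finite edge groups over residually finite vertex groups). More to the point, I would produce a finite-index normal subgroup $N\trianglelefteq G$ that intersects every edge group (and every conjugate of every edge group, of which there are finitely many up to conjugacy) trivially: for each edge group $E$, finiteness lets me choose a finite-index subgroup of $G$ avoiding $E\setminus\{1\}$, using residual finiteness of the vertex groups to separate the finitely many nontrivial elements of $E$ from $1$; intersecting over the finitely many conjugacy classes of edge groups and taking the normal core yields $N$. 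Now restrict the Bass--Serre tree action to $N$: $N$ acts on the same tree with trivial edge stabilizers, hence $N$ splits as a free product. By the structure of the induced graph-of-groups decomposition for $N$, its vertex groups are exactly the intersections of $N$ with the $G$-conjugates of the original vertex groups — so each is either finite, hence (being torsion and sitting inside a free product with trivial amalgamation) trivial, or a finite-index subgroup $P_i$ of some conjugate of a peripheral group — and the graph underlying the splitting of $N$ contributes a free group $F$ of some finite rank (possibly $0$) coming from the fundamental group of the quotient graph. Thus $N\cong F*(*_{i=1}^n P_i)$ with $F$ free and each $P_i$ of finite index in some $P\in\mathcal{P}$, which is the claim.

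I expect the main obstacle to be the bookkeeping that identifies the vertex groups of the induced splitting of $N$ and, in particular, ruling out that a finite vertex group of $G$ contributes a nontrivial factor to $N$: this needs the observation that $N\cap(\text{finite group})$ can be arranged to be trivial (again by residual finiteness / finite index) and that torsion-free free products have no nontrivial finite subgroups in the factors, so such vertices genuinely disappear. A secondary subtlety is keeping track of conjugacy: a single peripheral $P$ may have several $G$-conjugates meeting a transversal of the $N$-action, producing several factors $P_i$, each of finite index in a conjugate of $P$ (equivalently, after conjugating, in $P$ itself), and one must make sure these are finitely many — which follows since the graph of groups is finite and $[G:N]<\infty$. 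None of these points is deep, but assembling them correctly is where the care lies.
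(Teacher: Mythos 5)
Your proposal is correct and follows essentially the same route as the paper: Corollary \ref{cor:bow} supplies the finite graph of groups with finite edge groups, and one then passes to a finite-index subgroup meeting every edge group (and every finite vertex group) trivially, whose action on the Bass--Serre tree yields the desired free product of a free group with finite-index subgroups of peripherals. The only difference is cosmetic: where you invoke the classical residual finiteness of fundamental groups of finite graphs of residually finite groups with finite edge groups, the paper builds the required finite quotient by hand --- first replacing each infinite vertex group by a finite quotient injective on its incident edge groups, then applying Scott--Wall to the resulting finite graph of finite groups --- which is essentially the standard proof of the fact you cite.
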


Theorem \ref{thm:Cantor} follows from Corollary \ref{cor:bow} and 

\begin{theorem} \label{thm:GOG} Let $(G,\mathcal{P})$ be a relatively hyperbolic group pair, such that $G$ can be written as a finite graph of groups, where every edge group is finite and each vertex group is either finite or a peripheral group.  Assume that each peripheral group is residually finite.  Then $G$ is virtually the free product of a free group and finite index subgroups of peripheral groups. 
\end{theorem}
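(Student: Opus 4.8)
The plan is to realise $G$ as the fundamental group of the given finite graph of groups, pass to its Bass--Serre tree $T$, and then replace $G$ by a finite-index normal subgroup $N$ whose action on $T$ has \emph{trivial} edge stabilisers. Bass--Serre theory will then exhibit $N$ as the fundamental group of a finite graph of groups with trivial edge groups, that is, as a free product of a free group and the vertex groups of $N\backslash T$, and a quick bookkeeping of those vertex groups will finish the proof.

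\emph{Step 1: $G$ is residually finite.} By hypothesis $G$ is the fundamental group of a finite graph of groups all of whose edge groups are finite and all of whose vertex groups are residually finite (each is either finite, or peripheral and residually finite by assumption). Building the graph of groups up edge by edge from a spanning tree, $G$ is obtained from residually finite groups by iterated amalgamated free products over finite subgroups followed by iterated HNN extensions over finite subgroups; by the classical results on residual finiteness of such amalgams and HNN extensions, $G$ is residually finite.

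\emph{Step 2: killing the finite edge and vertex groups.} Let $F_1,\dots,F_m$ be the (finitely many, since the graph is finite) subgroups of $G$ occurring as an edge group of the graph of groups or as a finite vertex group. Using residual finiteness, for each $k$ choose a homomorphism $q_k\colon G\to Q_k$ onto a finite group that is injective on $F_k$ (separate each of the finitely many non-trivial elements of $F_k$ and intersect the resulting kernels). Put $q=\prod_k q_k$ and $N=\ker q$. Then $N$ is a finite-index normal subgroup of $G$ with $N\cap F_k=\{1\}$, hence $N\cap gF_kg^{-1}=g(N\cap F_k)g^{-1}=\{1\}$, for every $k$ and every $g\in G$. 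Now $N$ acts on the same tree $T$, and since $[G:N]<\infty$ the quotient $\Lambda=N\backslash T$ is a finite graph. In the induced graph-of-groups decomposition of $N$, every edge group has the form $N\cap gG_eg^{-1}$ with $G_e$ finite, hence is trivial; every vertex group has the form $N\cap gG_vg^{-1}$, which is trivial when $G_v$ is finite and equals $g(N\cap G_v)g^{-1}$ when $G_v$ is peripheral, in which case it is isomorphic to the finite-index subgroup $N\cap G_v$ of $G_v$. As all edge groups of $\Lambda$ are trivial, $N$ is the fundamental group of the graph of groups $\Lambda$, hence the free product of the fundamental group of the underlying graph of $\Lambda$ (a finitely generated free group) with the finitely many non-trivial vertex groups. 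Thus $N$ is the free product of a free group with finite-index subgroups of peripheral groups, and $[G:N]<\infty$, which is the assertion.

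\emph{Expected main obstacle.} The only non-formal ingredient is Step 1, the residual finiteness of $G$; the remainder is a routine separation argument together with Bass--Serre theory. The one point requiring care is that we must separate \emph{only} the finite edge and vertex subgroups and not the torsion sitting inside the peripheral vertex groups: $G$ need not be virtually torsion-free, since a peripheral group may well contain torsion, so $N$ is chosen to meet trivially exactly those finitely many finite subgroups, leaving the peripheral vertex groups merely cut down to finite index.
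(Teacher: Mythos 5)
Your argument is correct, and its endgame coincides with the paper's: both proofs produce a finite quotient of $G$ that is injective on all the finite edge groups (and finite vertex groups), take its kernel $N$, and observe via Bass--Serre theory that $N$ acts on the same tree with trivial edge stabilizers, hence is a free product of a free group with finite-index subgroups of the peripheral vertex groups. Where you diverge is in how that finite quotient is manufactured. You establish outright that $G$ is residually finite, by building the graph of groups up through amalgamated products and HNN extensions over finite subgroups and invoking the classical combination theorems (Baumslag for amalgams over finite subgroups, Baumslag--Tretkoff for HNN extensions with finite associated subgroups); residual finiteness then lets you separate the finitely many finite subgroups in one stroke. The paper instead avoids any residual finiteness claim about $G$ itself: it first uses residual finiteness of each peripheral group $P$ only to find a finite quotient $C_v$ of each infinite vertex group that is injective on the images of the incident edge groups, thereby collapsing $G$ onto the fundamental group $G'$ of a finite graph of \emph{finite} groups, and then appeals to Scott--Wall for a finite quotient of $G'$ injective on its vertex groups. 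Your route proves strictly more (residual finiteness of $G$) at the cost of importing two nontrivial combination theorems; the paper's route is more economical in what it needs from the peripheral groups, using only finitely many separations inside each $P$ plus the elementary Scott--Wall fact about graphs of finite groups. One small point to keep visible in your write-up: your separation must be applied to the edge groups themselves (not merely to the finite vertex groups), since a finite edge group sitting inside an infinite peripheral vertex group is not contained in any finite vertex group; your list $F_1,\dots,F_m$ does include the edge groups, so this is handled, but it is exactly the point where the paper's choice of $\psi_v$ injective on the $\phi_{\epsilon,e}(G_e)$ is doing the corresponding work.
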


 Let $G$ be the fundamental group of a graph of groups $\mathcal{G}$ with underlying graph $\Gamma$.  Suppose further, as in the hypotheses of Theorem \ref{thm:GOG}, that  $\mathcal{P}$ is a collection of subgroups of $G$ where each subgroup is residually finite,  each edge group is finite, and each vertex group is either finite or a subgroup in $\mathcal{P}$.

\begin{proof}[Proof of Theorem \ref{thm:GOG}]
We will map $G$ to the fundamental group of a graph of groups $G'$, over the same graph $\Gamma$ but where the vertex groups and edge groups are all finite. 
For each infinite vertex group $G_v$, conjugate to some $P \in \mathcal{P}$, there are finitely many edges meeting the vertex $v$.  Since $P$ is residually finite, there is a map
 $\psi_v:G_v  \rightarrow C_v$ onto a finite group $C_v$
 which is injective on the union of the images  $\phi_{\epsilon, e}(G_e)$ where $e(\epsilon) =v$.  
We will define $G'$ as the fundamental group of $\mathcal{G'} = (\Gamma, \lbrace G'_v \rbrace, \lbrace G_e \rbrace, \lbrace f_{\epsilon, e} \rbrace)$ where \begin{itemize} 

\item $G'_v = G_v$ if $G_v$ is finite, and $G'_v = C_v$  if $G_v$ is infinite. 
\item $f_{\epsilon, e} = \psi_v\circ \phi_{\epsilon,e} : G_e \rightarrow C_v$ if $G_{e(\epsilon)} $ is infinite, and $f_{\epsilon, e} = \phi_{\epsilon,e}$ if $G_{e(\epsilon)}$ is finite. 
\end{itemize} 

The group $G$ admits a natural surjection to $G'$.  Furthermore, $G'$ admits a surjection to a finite group which is injective on every edge and vertex group of $G'$, by Scott and Wall \cite[Chapter 7]{scott:wall}.  
Then the composition of these two maps is a map from $G$ to a finite group which is injective on every finite vertex group and every edge group.  The kernel $H$ of this composition is a finite index subgroup
of $G$ which acts on the  same tree as $G$ but with trivial edge groups. Thus, we see $H$ as the fundamental group of a finite graph of groups where the edge groups are trivial and the vertex groups
are either trivial or finite index subgroups conjugate to peripheral subgroups of $G$. This implies that $H$ is the free product of a free group and finite index subgroups of peripheral groups. 
\end{proof}

\section{Geometrically finite convergence groups acting on $S^2$}
\label{sec:S2}

A relatively hyperbolic group pair $(G,\PP)$ can have a planar boundary where the action does not extend to $S^2$;  see, for example \cite[Section 9]{KK}, where the group $G$ is hyperbolic and virtually Kleinian. The group $G$ need not be virtually Kleinian for $\partial(G, \PP)$ to be planar, though, and its peripheral subgroups can be arbitrary \cite{hruska:walsh}.
Here we collect some general results on geometrically finite convergence groups on $S^2$, which will be used for the more specific case of Schottky sets which we study here. 

Let $G$ be a convergence group acting on $S^2$ with limit set $\Lambda=\Lambda_G\subset S^2$. A relatively hyperbolic group pair $(G,\PP)$ is a {\it geometrically finite convergence group on $S^2$} if every point of $\Lambda$ is either a bounded parabolic point (with maximal parabolic group in $\PP$) or a conical limit point.  
We are not in general assuming that the action is faithful: there could be a finite normal subgroup of $G$  which acts as the identity on $S^2$.  When we know that the quotient by this finite normal subgroup is virtually a 2 or 3-manifold group, there is a finite index subgroup of $G$ which acts  as a subgroup of $Homeo(S^2)$, by \cite[Theorem 1.3]{peter:cyril}.  In what follows we will be analyzing the quotients by the finite normal subgroup, and the results in general will be virtual.

\begin{lemma}\label{lma:translation} An infinite-order, orientation-preserving parabolic element of a geometrically finite convergence group on $S^2$ is conjugate to a translation {in the plane obtained by removing from $S^2$ the fixed point of the parabolic element}.
\end{lemma}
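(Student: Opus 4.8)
The plan is to understand $g$ directly as an orientation-preserving homeomorphism of $S^2$ and show that it generates an infinite cyclic group acting properly discontinuously on the complement of its parabolic fixed point; the quotient will be an open annulus, and lifting this identification back to the plane exhibits the desired conjugacy.

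First I would record the dynamics. Since $g$ is parabolic it fixes a single point $p$ of the limit set, and from the standard behaviour of infinite-order elements of convergence groups (see \cite{GM1,TukiaConical}) one has $g^n\to p$ and $g^{-n}\to p$ uniformly on compact subsets of $S^2\setminus\{p\}$ as $n\to+\infty$. Consequently $p$ is the only fixed point of $g$ on $S^2$, and more generally $g^k$ has no fixed point in $S^2\setminus\{p\}$ for any $k\neq 0$: if $g^k(q)=q$ with $q\neq p$ then, since $(g^{kn})_n$ is a subsequence of $(g^m)_{m\geq1}$, we would get $q=g^{kn}(q)\to p$, a contradiction. Thus $\langle g\rangle\cong\Z$ acts freely on $\Sigma:=S^2\setminus\{p\}\cong\R^2$, and the action is properly discontinuous: for compact $K\subset\Sigma$ the uniform convergence $g^{\pm n}\to p$ forces $g^n(K)\cap K=\emptyset$ for all but finitely many $n$.

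Next I would pass to the quotient $X:=\Sigma/\langle g\rangle$. The quotient map is then a covering with deck group $\Z$, so $X$ is a connected surface with $\pi_1(X)\cong\Z$ and universal cover $\R^2$; as $g$ preserves orientation, $X$ is orientable. By the classification of noncompact surfaces, the only such surface is the open annulus $A=S^1\times\R$. Fixing a homeomorphism $\varphi\colon X\to A$ and lifting it to universal covers gives a homeomorphism $\widetilde\varphi\colon\Sigma\to\R^2$, where I normalise so that $\R^2\to A$ is the usual covering $(s,t)\mapsto(e^{2\pi i s},t)$, whose group of deck transformations is generated by the translation $\tau(s,t)=(s+1,t)$. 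Then $\widetilde\varphi$ conjugates the deck group $\langle g\rangle$ of $\Sigma\to X$ onto $\langle\tau\rangle$, so $\widetilde\varphi\, g\,\widetilde\varphi^{-1}\in\{\tau,\tau^{-1}\}$; replacing $\widetilde\varphi$ by its composition with $(s,t)\mapsto(-s,t)$ if needed, we may take $\widetilde\varphi\, g\,\widetilde\varphi^{-1}=\tau$. Finally, the homeomorphism $\widetilde\varphi$ between the planes $\Sigma$ and $\R^2$ extends to their one-point compactifications, i.e. to a homeomorphism $S^2\to S^2$ carrying $p$ to $\infty$, under which $g$ becomes the translation $\tau$.

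The one genuinely non-soft ingredient is the classification step — that an open orientable surface with fundamental group $\Z$ is an annulus — and this is the part I expect to need care (one checks genus $0$, since $H_1$ is cyclic, and that there are exactly two ends). If one prefers to avoid invoking the classification of surfaces, the same conclusion is reachable by a Brouwer-plane-translation argument: produce a properly embedded line $\ell\subset\Sigma$ with $g(\ell)\cap\ell=\emptyset$ such that the closed strips bounded by consecutive translates $g^n(\ell)$ exhaust $\Sigma$, and then assemble the conjugacy to $\tau$ strip by strip. Either way the remaining verifications are routine.
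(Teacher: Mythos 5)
Your proposal is correct and follows essentially the same route as the paper: show $\langle g\rangle$ acts freely and properly discontinuously on $S^2\setminus\{p\}$, identify the quotient as an open annulus (the paper's ``cylinder''), and recover the conjugacy to a translation from the deck group of the covering. Your write-up simply supplies the details the paper leaves implicit, notably the convergence-dynamics argument that all nonzero powers of $g$ are fixed-point free and the use of orientability to exclude the M\"obius band.
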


\begin{proof} Let $g\in G$ be parabolic with fixed point $p\in S^2$. Its restriction to the plane $S^2\setminus \{p\}$ is fixed-point
free and its action is properly discontinuous. Hence, $(S^2\setminus\{p\})/\langle g\rangle $ is a surface with cyclic  
fundamental group and so homeomorphic to a cylinder. This implies that the action of $g$ is conjugate to that
of a translation in the plane. \end{proof}

\begin{proposition}\label{prop:planarparabolics} 
 Let {$(G, \mathcal{P})$} be a geometrically finite convergence group on $S^2$ with $G$ finitely generated. Then each $P \in \PP$ is virtually a finite type surface group, that is virtually free of rank at least $1$ or virtually a closed surface group.
\end{proposition}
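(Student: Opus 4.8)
The plan is to fix $P \in \PP$ with bounded parabolic fixed point $p \in \Lambda_G \subset S^2$, and to study the action of $P$ on the punctured sphere $S^2 \setminus \{p\}$, which is homeomorphic to $\R^2$. First I would record that $P$ acts properly discontinuously on $S^2 \setminus \{p\}$: properness off the limit set is automatic, and the only limit point of $P$ in $S^2$ is $p$ itself (since $p$ is a parabolic point whose stabilizer $P$ fixes no other point), so in fact $P$ acts properly discontinuously on all of $S^2 \setminus \{p\} \cong \R^2$. Passing to the orientation-preserving index-at-most-two subgroup $P^+$, we get a properly discontinuous action of $P^+$ on $\R^2$ by orientation-preserving homeomorphisms; moreover, because $p$ is a \emph{bounded} parabolic point, $(S^2 \setminus \{p\})/P$ is compact, hence $(\R^2)/P^+$ is a closed surface or an orbifold with compact underlying space.

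Next I would invoke the structure theory for properly discontinuous group actions on the plane. Since $P$ is finitely generated (it is one of finitely many conjugacy classes of peripheral subgroups of the finitely generated group $G$, and peripheral subgroups of a relatively hyperbolic pair are finitely generated), and acts properly discontinuously on $\R^2$, a theorem on planar discontinuous groups — the classification of groups acting properly discontinuously on the plane, equivalently NEC/Fuchsian-type groups up to the Nielsen realization/uniformization results available in this topological setting, cf. the references already in play such as \cite{peter:cyril} — tells us that $P$ is virtually a surface group: either virtually free or virtually the fundamental group of a closed surface. The compactness of the quotient $(\R^2)/P$ forces the quotient surface to be compact; a torsion-free finite-index subgroup then has quotient a compact surface with (possibly empty) boundary coming from the removed puncture. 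If the puncture contributes a genuine end, the torsion-free subgroup is free of rank $\geq 1$; if the action on $\R^2$ extends to a free action on $S^2$ by the subgroup, the quotient is a closed surface. This dichotomy is exactly the claimed one: $P$ is virtually free of rank $\geq 1$, or virtually a closed surface group. To exclude the trivial and rank-$0$ cases, use that $P$ is infinite (it is a parabolic subgroup) and that $\Lambda_G$ has more than two points, so $P$ cannot be virtually cyclic if... — actually rank $1$ free \emph{is} allowed, so the only thing to rule out is $P$ finite, which is immediate since parabolic subgroups are infinite.

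The main obstacle I anticipate is making precise and citable the passage from ``finitely generated group acting properly discontinuously and cocompactly on $\R^2$ (or on $\R^2$ with the action virtually extending over a puncture)'' to ``virtually a finite-type surface group,'' since the action is only by homeomorphisms, not isometries, so one cannot directly quote Fuchsian group theory. The clean route is: Lemma \ref{lma:translation} already handles individual parabolics; for the group, one uses that a finitely generated group acting properly discontinuously on the plane has a classifying space of dimension $\leq 2$ and is therefore of cohomological dimension $\leq 2$, is virtually torsion-free (by properness and the existence of finite-index torsion-free subgroups for such groups), and a torsion-free group of cohomological dimension $\leq 2$ acting properly discontinuously and (virtually, up to the puncture) cocompactly on $\R^2$ is a surface group by standard $3$-manifold/surface recognition — equivalently one appeals to \cite{peter:cyril} whose Theorem 1.3 is already being used in this section for exactly this kind of ``virtual manifold group'' conclusion. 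I would spell this last implication out carefully, distinguishing the compact-quotient (closed surface) case from the noncompact-quotient (free) case according to whether the puncture $p$ is an isolated end of $(S^2 \setminus \{p\})/P$ or fills in to a point.
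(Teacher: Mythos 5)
Your overall route is the same as the paper's: observe that $\Lambda_P=\{p\}$, so $P$ acts properly discontinuously on $S^2\setminus\{p\}\cong\R^2$; note that $P$ is finitely generated because $G$ is (the paper cites \cite[Prop.~2.29]{Osin06} for this); and then invoke the classification of finitely generated groups acting properly on the plane (the paper quotes \cite[Cor.~3.2]{hruska:walsh}, with \cite[Thm~1.3]{peter:cyril} handling a possible finite normal subgroup). So in outline you and the paper agree.

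However, one step of your write-up is false and would, if taken at face value, prove too much. You assert that because $p$ is a \emph{bounded} parabolic point, $(S^2\setminus\{p\})/P$ is compact. Bounded parabolicity only says that $(\Lambda_G\setminus\{p\})/P$ is compact; it says nothing about the quotient of the whole punctured sphere. For a rank-one cusp of a geometrically finite Kleinian group with nonempty ordinary set, $P\cong\Z$ and $(S^2\setminus\{p\})/P$ is an open annulus, which is not compact. If your compactness claim were true, the quotient orbifold would always be closed and $P$ would always be virtually a \emph{closed} surface group, contradicting the ``virtually free of rank at least $1$'' alternative that the proposition explicitly allows (and that the rank-one example realizes). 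You partially recover by later distinguishing the case where ``the puncture contributes a genuine end,'' but that case is incompatible with the compactness you claimed two sentences earlier, so the argument as written is internally inconsistent. The fix is simply to drop the cocompactness claim entirely: the finiteness-of-type conclusion comes from the finite generation of $P$ together with the classification of finitely generated planar discontinuous groups, not from any compactness of $(S^2\setminus\{p\})/P$. With that correction your argument coincides with the paper's.
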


\begin{proof}
{Since we are assuming that $G$ is finitely generated, according to \cite[Prop.~2.29]{Osin06} any maximal parabolic subgroup $P$ of $G$ is also finitely generated.}
Since $P$ also acts properly on $\mathbb{R}^2$, {the conclusion follows from} 
\cite[Cor. 3.2]{hruska:walsh}  {whose } 
proof uses \cite[Thm 1.3]{peter:cyril} in the case that {the action is not faithful.} 
\end{proof}

{\noindent\bf Elementary action.---} A convergence action on a compact metrizable space is {\it elementary} if its limit set is finite, i.e.,
contains at most two points.  Such actions are classified on the sphere, cf. \cite[Theorem 3.4, Lemma 4.2]{martin:skora}.

\begin{proposition}\label{prop:elementary} Let $G$ be a finitely generated subgroup of $Homeo^+(S^2)$ (the orientation preserving homeomorphisms of $S^2$) which is an elementary convergence group. 

{Then $G$ is isomorphic} to a subgroup of M\"obius transformations.  More precisely,
\begin{enumerate}
\item If $\Lambda_G=\emptyset$, then its action is conjugate to that of a finite subgroup of $SO_3(\R)$;
\item If $\Lambda_G$ is a singleton, then $G$ contains a finite index subgroup whose action is conjugate {on $S^2\setminus\Lambda_G$} either to that of a discrete translation group acting on the plane or to a Fuchsian group {acting on the unit disk} that defines a surface of finite type. In particular, if $G$ is two-ended, then $G$ is either isomorphic to $\Z$ or to $(\Z/2\Z)*(\Z/2\Z)$.
\item If $\Lambda_G$ is a pair of points, then $G$ has an index 1 or 2 subgroup $G'$ which is Abelian, and the action of $G'$ is conjugate to that of $\langle z\mapsto 2z, z\mapsto \zeta z\rangle$, with $\zeta^n=1$ for some $n\in{\mathbb{N}}$.
\end{enumerate}
\end{proposition}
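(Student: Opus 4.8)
\textbf{Proof proposal for Proposition~\ref{prop:elementary}.}
The plan is to handle the three cases according to the size of the limit set, in each case first understanding the algebraic structure of $G$ and then producing the conjugating homeomorphism by hand. For case (1), with $\Lambda_G=\emptyset$ the action is properly discontinuous and cocompact on all of $S^2$, so $G$ is finite; a finite group acting on $S^2$ by orientation-preserving homeomorphisms is, by the resolution of the Nielsen realization / Kerékjártó problem for $S^2$ (or by averaging a Riemannian metric and invoking uniformization), conjugate to a subgroup of $SO_3(\R)$. For case (2), $\Lambda_G=\{p\}$ means $G$ acts properly discontinuously on $S^2\setminus\{p\}\cong\R^2$; since $G$ is finitely generated and acts properly on the plane, Proposition~\ref{prop:planarparabolics} (or directly \cite[Cor.~3.2]{hruska:walsh}) tells us $G$ is virtually a finite-type surface group, hence a finite-index subgroup $G'$ is such a surface group, and since it acts properly on $\R^2$ it is Fuchsian up to conjugacy (uniformize the quotient surface, lift the hyperbolic or Euclidean structure); the two-ended subcase forces the surface to be an annulus or Möbius band, giving $\Z$ or $(\Z/2\Z)*(\Z/2\Z)$.

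For case (3) I would argue as follows. With $\Lambda_G=\{a,b\}$, the set $\{a,b\}$ is $G$-invariant, so there is an index-$1$ or $2$ subgroup $G'$ fixing both $a$ and $b$ pointwise. Then $G'$ acts properly discontinuously on the open annulus $S^2\setminus\{a,b\}$ with quotient a surface having $\Z$-or-finite fundamental group covered by the annulus, and in fact cocompactly (since $\{a,b\}=\Lambda_{G'}$ makes the action on the annulus ``convex cocompact''). The quotient is then a closed annulus/torus orbifold, forcing $G'$ to be virtually $\Z$; combined with the fact that $G'$ fixes both ends and preserves orientation, $G'$ is free abelian of rank $\le 2$ with a $\Z$ acting loxodromically (the translation along the annulus) and the complementary finite cyclic part acting by rotation of the annulus. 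Uniformizing the annulus $S^2\setminus\{0,\infty\}=\C^*$ and conjugating, the loxodromic generator becomes $z\mapsto\lambda z$ (and after a further smooth conjugacy, $z\mapsto 2z$) while the finite-order generator becomes $z\mapsto\zeta z$ with $\zeta^n=1$; one checks these genuinely extend continuously over $\{0,\infty\}$ and so the conjugacy is by a homeomorphism of $S^2$.

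The main obstacle in all three cases is the same: converting an abstract properly discontinuous action by homeomorphisms into a genuine \emph{topological conjugacy} to a Möbius action, i.e.\ building the conjugating homeomorphism of $S^2$ rather than merely identifying the group abstractly. For the finite case this is the classical (and nontrivial) linearization theorem for finite group actions on $S^2$; for cases (2) and (3) it rests on uniformization of the quotient surface together with a careful check that the resulting conjugacy of the punctured sphere extends across the removed points — this boundary-extension step is where one must be most careful, using that parabolics are conjugate to translations (Lemma~\ref{lma:translation}) and that the finite-order elements act as finite rotations near the fixed points. Since the statement only claims the conclusion up to finite index (and, implicitly for the non-faithful case, after quotienting the finite kernel as discussed after Lemma~\ref{lma:translation}), I would cite \cite[Theorem~3.4, Lemma~4.2]{martin:skora} for the clean statements of the extension/conjugacy facts and simply assemble them, rather than reprove the linearization results from scratch.
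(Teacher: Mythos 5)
Your overall strategy --- case analysis on $|\Lambda_G|$, identification of the quotient orbifold of the ordinary set, Proposition~\ref{prop:planarparabolics} for the one-point case, and passing to the pointwise stabilizer $G'$ of the two-point limit set with a torus quotient in case (3) --- is essentially the route the paper takes, and cases (1) and (3) go through modulo a slip in (3): the group you produce is $\Z\times\Z/n$, which is abelian but not ``free abelian of rank $\le 2$''. The torus-quotient argument you sketch does yield the correct structure, because $G'$ acts \emph{freely} on the annulus (a finite-order orientation-preserving homeomorphism of $S^2$ fixing $a$ and $b$ fixes nothing else), so $G'$ is a quotient of $\pi_1(T^2)=\Z^2$ by the image of $\pi_1$ of the annulus, hence abelian of the stated form.

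The genuine gap is in the two-ended subcase of (2). You assert that two-endedness ``forces the surface to be an annulus or M\"obius band, giving $\Z$ or $(\Z/2\Z)*(\Z/2\Z)$.'' This does not work as stated: the M\"obius band has fundamental group $\Z$, not the infinite dihedral group, and cannot arise as the quotient of an orientation-preserving action in any case; the orbifold realizing $(\Z/2\Z)*(\Z/2\Z)$ is a disk with two cone points of order $2$. More importantly, nothing in your argument excludes $G\cong\Z\times\Z/n$ with $n>1$, or more generally a two-ended $G$ with nontrivial finite normal subgroup; ruling this out is the actual content of the subcase. The paper does it as follows: take the finite normal subgroup $F$ given by Scott--Wall, note that $F$ fixes $p$ and is therefore a finite cyclic rotation group with exactly one further fixed point $q$, and observe that an infinite-order element $g$ centralizing $F$ acts as a translation on $S^2\setminus\{p\}$ by Lemma~\ref{lma:translation}, so the infinite orbit $\{g^n(q)\}$ consists of fixed points of $F$ --- impossible unless $F$ is trivial; then \cite[Theorem 5.12(iii)]{scott:wall} gives $\Z$ or $(\Z/2\Z)*(\Z/2\Z)$. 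Some version of this argument (equivalently: a nontrivial normal rotation subgroup would force all of $G$ to stabilize its second fixed point, contradicting proper discontinuity on the plane) must be supplied.
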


\begin{proof} If the limit set is empty, then the action of $G$ on $S^2$ is properly discontinuous and cocompact so that $S^2/G$ is naturally equipped with a good spherical
orbifold structure. In other words,  its action is conjugate to that of a finite subgroup of $SO_3(\R)$.

Let us now assume that the limit set consists of a single point $p$. It must be parabolic so 
Proposition~\ref{prop:planarparabolics} implies that it is virtually a surface group of finite type. 

Let us now assume furthermore that $G$ is two-ended. Following \cite[Theorem 5.12]{scott:wall}, we consider a finite normal subgroup $F$ of $G$. Note that, as $F$ is normal and finite, we can find an infinite order element $g$ in $G$ that centralizes it. This implies that $F$ also fixes the point $p$, and, by the previous case, $F$ has to be a finite cyclic group. 
Let us assume that $F$ is non trivial and let $q$ be the other fixed point under $F$. Since $g$ has
infinite order, it acts as a translation by Lemma \ref{lma:translation}. Thus we should have $g^n f g^{-n}(q)=f(q)=q$ for all $n\in\Z$ and $f\in F$. However, this shows that $f$ fixes infinitely many points and cannot be non-trivial, 
so that we may conclude that $F$ is 
trivial and that $G$ is isomorphic to $\Z$ or $(\Z/2\Z)*(\Z/2\Z)$ by \cite[Theorem 5.12 (iii)]{scott:wall}. 

We now assume that $\Lambda_G$ has two points, so $G$  is two-ended. Now take the subgroup $G'$ of $G$ which fixes pointwise $\Lambda_G$. This is a subgroup of index at most two. As above, we consider a finite normal subgroup $F$ of $G'$. 
Since $F<G'$ fixes $\Lambda_G=\{p,q\}$ pointwise and is finite, $F$ has to be a rotation group, 
i.e., a finite (cyclic) subgroup of $SO_2(\R)$.
Since the action is properly discontinuous, cocompact and free on $S^2\setminus\{p,q\}$, the quotient by $G'$
is a torus. If $G' \neq G$, $G = G' \rtimes \mathbb{Z}/2\mathbb{Z}$, where $\mathbb{Z}/2\mathbb{Z}$ acts dihedrally on $G'$. The result follows.
 \end{proof}

The following is immediate from \cite{Dasgupta:Hruska}, which extends \cite[Thm\,0.1]{bowbound} and \cite[Thm\,0.2]{bowconnect}.

\begin{corollary}\label{cor:lc}
Let $(G, \mathcal{P})$ be a geometrically finite convergence group acting on $S^2$ with relatively hyperbolic boundary $\partial(G, \mathcal{P})$.  Then each component of $\partial(G, \mathcal{P})$ is locally connected and has the property that every cut point  is a parabolic point.
\end{corollary} 

\begin{lemma}\label{lma:omegafuchs} Let $(G,\PP)$ be a geometrically finite convergence group on $S^2$ with connected Bowditch boundary $\partial(G, \PP) = \Lambda$.
The ordinary set $S^2\setminus\Lambda$ is made of finitely many orbits of components, each with stabilizer which is a virtual 2--orbifold group. When $G$ is finitely generated, these are virtually finite-type surface groups.  
\end{lemma}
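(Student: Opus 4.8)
The plan is to analyze the action of $G$ on the ordinary set $\Omega = S^2 \setminus \Lambda$ component by component. Since $(G,\PP)$ is a geometrically finite convergence group and $\Lambda$ is connected, the action on $\Omega$ is properly discontinuous, so for each component $\Omega_0$ of $\Omega$ the stabilizer $\Stab(\Omega_0)$ acts properly discontinuously on $\Omega_0$. First I would check that $\Omega$ has only finitely many $G$-orbits of components: the components of $\Omega$ form a $G$-invariant null-sequence (this is the complementary statement to Proposition \ref{prop:infends} / Corollary \ref{cor:lc}, reflecting local connectedness of $\Lambda$), and a $G$-invariant null-sequence of sets each containing more than one point has finitely many orbits by the same $m$-separation argument used in Proposition \ref{prop:nullseq}(2). (A component that is a single point is impossible here since $\Omega$ is open.) So it remains to identify $\Stab(\Omega_0)$ for a fixed component $\Omega_0$.

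Next I would show $\Stab(\Omega_0)$ acts on $\Omega_0$ properly discontinuously with compact quotient, so that $\Omega_0 / \Stab(\Omega_0)$ is a compact $2$-orbifold and hence $\Stab(\Omega_0)$ is a $2$-orbifold group. Proper discontinuity is inherited from the properly discontinuous action of $G$ on the whole of $\Omega$. For cocompactness, the key point is that $\Omega_0$ is a simply connected (or at worst planar) open subset of $S^2$ whose frontier is $\partial \Omega_0 \subseteq \Lambda$, and by Corollary \ref{cor:lc} the relevant boundary component of $\Lambda$ is locally connected; one then uses the convergence-group dynamics on $\Lambda$ together with the fact that every point of $\Lambda$ is conical or bounded parabolic to produce a compact set in $\Omega_0$ whose $\Stab(\Omega_0)$-translates cover $\Omega_0$ — concretely, parabolic points contribute cusp neighborhoods and the bounded-parabolic condition gives compact fundamental domains for the frontier, while conical points are approached from inside $\Omega_0$ along orbits, forcing the translates of a compact core to exhaust $\Omega_0$. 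Equivalently, one can invoke that $\Stab(\Omega_0)$ is relatively quasiconvex (its limit set being the locally connected frontier component, via Proposition \ref{prop:nullseq}(2) applied to the null-sequence of frontiers) and that a relatively quasiconvex subgroup acting on a complementary planar disk acts cocompactly on it.

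Finally, when $G$ is finitely generated I would upgrade ``$2$-orbifold group'' to ``finite-type surface group.'' Here $\Stab(\Omega_0)$ is finitely generated: it is relatively quasiconvex in $(G,\PP)$, hence (by Osin, as cited in the proof of Proposition \ref{prop:planarparabolics}) finitely generated relative to its peripheral subgroups, and its peripheral subgroups are the maximal parabolics it contains, which by Proposition \ref{prop:planarparabolics} are themselves finitely generated; so $\Stab(\Omega_0)$ is finitely generated. A finitely generated group acting properly discontinuously and cocompactly on a planar surface $\Omega_0$ is the fundamental group of a compact $2$-orbifold with (possibly empty) boundary; passing to a finite-index torsion-free subgroup (using residual finiteness of surface groups, or directly Selberg's lemma for the linear pieces together with \cite[Thm 1.3]{peter:cyril} to handle a possible finite normal kernel acting trivially on $S^2$) one obtains a finite-type surface group, and the statement as phrased is ``virtual,'' matching the conventions set up in Section \ref{sec:S2}.

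The main obstacle I anticipate is the cocompactness of the $\Stab(\Omega_0)$-action on $\Omega_0$: one must carefully convert the geometric finiteness of $(G,\PP)$ on $\Lambda$ — the conical/bounded-parabolic dichotomy on the frontier of $\Omega_0$ — into a statement about the quotient of the open disk $\Omega_0$ being compact, which is precisely where local connectedness of $\Lambda$ (Corollary \ref{cor:lc}) and the bounded-parabolic hypothesis are both essential, and where a sloppy argument could miss noncompactness coming from ends of $\Omega_0$ limiting onto non-locally-connected parts of the frontier.
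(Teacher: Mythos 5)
Your first step (finitely many orbits of components via the null-sequence of their closures and Proposition \ref{prop:nullseq}(2)) matches the paper. The problem is the second step: you assert that $\Stab(\Omega_0)$ acts \emph{cocompactly} on $\Omega_0$, and you route both conclusions of the lemma through that claim. This is false in general. If $\partial(G,\PP)$ has parabolic points on $\partial\Omega_0$ whose stabilizers meet $\Stab(\Omega_0)$ in an infinite subgroup, then $\Omega_0/\Stab(\Omega_0)$ is a surface (or orbifold) with cusps and is not compact --- already the case of a finite-coarea, non-cocompact Fuchsian group acting on $S^2$ with limit set a circle gives a quotient of each complementary disk that is a finite-area cusped surface. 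The ``bounded parabolic'' condition gives cocompactness of the action on $\Lambda\setminus\{p\}$, not on the ordinary components adjacent to $p$; your parenthetical that ``a relatively quasiconvex subgroup acting on a complementary planar disk acts cocompactly on it'' is exactly the false step. The paper avoids cocompactness entirely: once one knows $\Omega_0$ is an open disk and $H=\Stab(\Omega_0)$ acts on it properly discontinuously, $H$ is a $2$-orbifold group by \cite[Cor. 3.2]{hruska:walsh}, with no compactness of the quotient needed. For the finite-type statement the paper does not use cocompactness either; it shows each peripheral subgroup $Q=P\cap H$ of $H$ is finitely generated because $\Omega_0/Q$ embeds as a suborbifold of the finite-type orbifold $(S^2\setminus\{p\})/P$ (using Proposition \ref{prop:planarparabolics}), whence $H$ is finitely generated and of finite type.

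A secondary gap: Proposition \ref{prop:nullseq}(2) applied to the null-sequence of frontiers gives relative quasiconvexity of $\Stab(\partial\Omega_0)$, not of $\Stab(\Omega_0)$, and these need not coincide a priori since several components could share a frontier. The paper closes this by showing $\Stab(\Omega_0)$ has index at most $2$ in $\Stab(\partial\Omega_0)$: either $\partial\Omega_0$ is a Jordan curve bounding at most two complementary components, or (via Carath\'eodory--Torhorst and a crosscut through a cut point) no other component can share the boundary. You would need some version of this step before transferring relative quasiconvexity from the frontier stabilizer to the component stabilizer.
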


\begin{proof} 
We start by observing that the compact connected set $\Lambda$ is also locally connected by Corollary~\ref{cor:lc}. According to \cite[Theorem VI.4.4, p. 113]{whyburnantop}, local connectivity of $\Lambda$ assures that the components of the ordinary set $S^2\setminus\Lambda$ form a null-sequence ($\Lambda$ is an $E$-set) and that the boundary of each component is locally connected.
Moreover, each component $\Omega$ is simply connected. This follows from the fact that every simple closed curve contained in the surface $\Omega$ separates the sphere into two disks, one containing the connected set $\Lambda\supset \partial\Omega$ and the other contained in $\Omega$. 

If $S^2\setminus\Lambda$ has only finitely many components, then $G$ contains a finite index subgroup that stabilizes each component, i.e., a relatively quasiconvex subgroup. Of course, in this case $S^2\setminus\Lambda$ is made of finitely many orbits.

If $ S^2 \setminus\Lambda$ has infinitely many components forming a null sequence, we may apply part 2 of Proposition \ref{prop:nullseq} to conclude that $S^2\setminus\Lambda$ is made of finitely many orbits and their boundaries are stabilized by relatively quasiconvex subgroups. We claim that the stabilizer $H$ of a component $\Omega$ of $S^2\setminus\Lambda$ is of finite index (at most 2) in the stabilizer of its boundary $\partial\Omega$. This shows that $H$ is relatively quasiconvex. The claim follows by observing that the elements of the stabilizer of $\partial\Omega$ that do not leave $\Omega$ invariant must permute the components of $S^2\setminus\Lambda$ that have the same boundary as $\Omega$. If $\Omega$ is a Jordan domain, that is, if its closure is an embedded disk, then its boundary is a Jordan curve and either bounds one or two 
components of $S^2\setminus\Lambda$.  If $\Omega$ is not a Jordan domain, the boundary of $\Omega$ is not an embedded circle. However, since $\partial\Omega$ is locally connected  (cf. the first paragraph of the proof), the Carath\'eodory-Torhorst theorem applies: we can find a homeomorphism of the open disk onto $\Omega$ which extends continuously to the boundaries, {$h: \overline{D^2} \rightarrow \overline{\Omega}$}. Since $\partial\Omega$ is not a circle, this map cannot be injective. We can thus find a simple closed curve $\gamma$ which is contained in the closure of $\Omega$ and meets $\partial\Omega$ in a single cut point. The curve $\gamma$ is the image of a simple arc joining two points of the boundary of the closed disk which are mapped to the same point in $\partial\Omega$. Since every other component of $S^2\setminus\Lambda$ must sit on one side or the other of $\gamma$, another component cannot have the same boundary as $\Omega$. We thus see that $H = Stab(\Omega)$ coincides with the stabilizer of $\partial\Omega$ in this case.  Since $\Omega$ is an open disc and $H$ acts properly discontinuously on $\Omega$, $H$ is  virtually an orbifold group by \cite[Cor. 3.2]{hruska:walsh}.

When $G$ is a finitely generated convergence group acting on $S^2$, the peripheral subgroups are finite-type orbifold subgroups by Proposition \ref{prop:planarparabolics}. We claim that the peripheral subgroups of $H$ are finitely generated, hence $H$ is finitely generated, since it is relatively quasiconvex, and hence hyperbolic relative to the induced peripheral subgroups.   Any peripheral subgroup $Q$ of $H$ is $P\cap H$, where $P$ is a peripheral subgroup of $(G, \mathcal{P})$.  This is exactly the subgroup of $P$ that takes $\Omega$ to itself.  Then $\Omega/Q$ embeds in the finite-type orbifold $(S^2 \setminus \{p\})/P$, so is an embedded sub-orbifold of a finite-type orbifold, and hence of finite type.  Since the peripheral subgroups are finitely generated, so is $H$. 
\end{proof}
 
 Let $(G, \mathcal{P})$ be a geometrically finite convergence group with Bowditch boundary $\partial(G, \PP) = \Lambda$. By Proposition \ref{prop:infends}, we may assume that $\Lambda$ is connected.  Let $H<G$ be the stabilizer of a component $\Omega$ of $S^2\setminus \Lambda$. By  Lemma \ref{lma:omegafuchs}, we may find a homeomorphism $f:\mathbb{D}\to \Omega$, where $\mathbb{D}\subset \mathbb{C}$ is the unit disk in the complex plane, that conjugates the action of $H$ on $\Omega$ to that of a Fuchsian group $F$ on  $\mathbb{D}$, in the sense that 
 $f^{-1}\circ H\circ f=F$ on $\mathbb{D}$. The action of $F$ extends to the whole Riemann sphere. Since $\partial\Omega$ is locally connected, the homeomorphism $f$ extends to a continuous map $h:\overline{\mathbb{D}}\to \overline{\Omega}$ that semi-conjugates $F$ to
$H$, i.e., $hF={H h}$ on $\overline{\mathbb{D}}$.

\begin{lemma}\label{lma:parabolicvsfuchs}  Let $(G, \PP)$ be a geometrically finite, non elementary, convergence group on $S^2$ with limit set $\Lambda$.  Let $\Omega$ be a simply connected component of $S^2 \setminus \Lambda$ and {$f:\mathbb{D}\to \Omega$ the homeomorphism that conjugates the stabilizer $H$ of $\Omega$} to a Fuchsian group, and $h:\overline{\mathbb{D}}\to\overline{\Omega}$ the extension of this homeomorphism as above. Let $p\in\partial\Omega$ be a parabolic point with stabilizer $P$, and set {$Q= f^{-1}\circ(P\cap H)\circ f$}. 
Then the limit set $\Lambda_Q$ is exactly the non-empty set  $h^{-1}(\{p\})$.
\end{lemma}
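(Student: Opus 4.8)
The plan is to work on the disk side via the conjugating homeomorphism $h\co\overline{\mathbb D}\to\overline\Omega$ and to show that $Q=h^{-1}(P\cap H)h$ is an infinite cyclic (or virtually cyclic) parabolic-type convergence group on $\overline{\mathbb D}$ whose limit set can only be the preimage $h^{-1}(\{p\})$. First I would record what we already know from the preceding lemmas: $\Omega$ is simply connected (Lemma~\ref{lma:omegafuchs}), $\partial\Omega$ is locally connected, and the Carath\'eodory--Torhorst theorem gives a continuous extension $h$ of a Riemann map; the stabilizer $H=\Stab(\Omega)$ is a finite-type orbifold group (relatively quasiconvex), and $P\cap H$ is the subgroup of the parabolic $P$ fixing $\Omega$, which is itself infinite since $p$ is a bounded parabolic point whose peripheral orbifold $(S^2\setminus\{p\})/P$ contains $\Omega/(P\cap H)$ as an embedded sub-orbifold of finite type, forcing $P\cap H$ to be infinite. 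So $Q$ is an infinite subgroup of the orbifold group $h^{-1}Hh$ acting on $\overline{\mathbb D}$, and a fortiori a convergence group on the closed disk.

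The key steps, in order. (1) Identify $h^{-1}(\{p\})$: since $p$ is a cut point of $\overline\Omega$ precisely when $\partial\Omega$ is non-Jordan (by Corollary~\ref{cor:lc} every cut point is parabolic, and conversely the non-injectivity points of $h$ on $\partial\mathbb D$ are exactly the cut points of $\overline\Omega$ that disconnect it), the fiber $h^{-1}(\{p\})$ is either a single point (Jordan case) or a finite nonempty set of points on $\partial\mathbb D$. (2) Show $\Lambda_Q\subseteq h^{-1}(\{p\})$: any convergence sequence $(q_n)$ in $Q$ corresponds, via $h$, to a sequence $(g_n)$ in $P\cap H\subseteq P$ that converges uniformly on compacta of $S^2\setminus\{p\}$ to $p$, because $P$ is parabolic with unique fixed point $p$ and all its nontrivial convergence dynamics attract to $p$ and repel from $p$; pulling back by the proper continuous map $h$, the limit points of $(q_n)$ must lie in $h^{-1}(\{p\})$, and the repelling point likewise lies there. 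Hence $\Lambda_Q\subseteq h^{-1}(\{p\})$. (3) Show the reverse inclusion $h^{-1}(\{p\})\subseteq\Lambda_Q$. Here I would argue that $Q$ cannot have limit set a proper nonempty subset of the finite set $h^{-1}(\{p\})$: $Q$ is infinite, so $\Lambda_Q\ne\emptyset$; if $\Lambda_Q$ were a single point $\xi\in h^{-1}(\{p\})$ while $|h^{-1}(\{p\})|\ge 2$, then pick a boundary point of $\mathbb D$ in $h^{-1}(\{p\})\setminus\{\xi\}$; since $Q$ fixes this point too (because $P\cap H$ fixes $p$ and $h$ conjugates the actions, every element of $Q$ permutes the finite fiber $h^{-1}(\{p\})$, and a finite-index subgroup $Q_0$ fixes it pointwise), $Q_0$ would be an infinite convergence group on $\overline{\mathbb D}$ fixing at least two boundary points, so it is elementary with $\Lambda_{Q_0}$ equal to that set of two (or more) points, contradicting $\Lambda_{Q_0}=\Lambda_Q=\{\xi\}$. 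So $\Lambda_Q$ contains every point of the fiber, i.e.\ $\Lambda_Q=h^{-1}(\{p\})$.

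The main obstacle I expect is step (3), and specifically making rigorous the passage between the dynamics of $P$ on $S^2$ near its parabolic fixed point and the dynamics of $Q$ on $\overline{\mathbb D}$ near the (possibly multi-point) fiber: one has to be careful that a single parabolic element $g\in P\cap H$ whose $S^2$-dynamics attracts everything to $p$ can, on $\overline{\mathbb D}$, behave like a parabolic element fixing a finite set of boundary points where the ``attracting'' and ``repelling'' directions on $\partial\mathbb D$ are distributed among the prime-ends over $p$. The clean way around this is to use the finiteness of $h^{-1}(\{p\})$ and Proposition~\ref{prop:elementary}(2): pass to the finite-index subgroup $Q_0\le Q$ pointwise-fixing the fiber, note $Q_0$ is an infinite elementary convergence subgroup of the orbifold group acting on $S^2$ (via $h^{-1}Hh$, conjugate into $SO_2$-plus-translation type), conclude $Q_0\cong\Z$ acting as a translation whose (two-point-degenerate-to-one) limiting behaviour pins $\Lambda_{Q_0}$ to exactly the fixed set, and finally observe $\Lambda_Q=\Lambda_{Q_0}$ since they differ by finite index. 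A secondary point to handle carefully is the equality $h(\Lambda_Q)=\{p\}$ on the nose, which follows from $h$ being a continuous surjection $\overline{\mathbb D}\to\overline\Omega$ equivariant for $Q\to P\cap H$ together with the fact that $P$'s limit set in $\overline\Omega$ is exactly $\{p\}$; this makes the two inclusions above genuine equalities of sets on $\partial\mathbb D$.
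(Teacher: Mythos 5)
Your proof has a genuine gap, and it is the load-bearing step. In (1) you assert that $h^{-1}(\{p\})$ is ``a single point (Jordan case) or a finite nonempty set of points,'' and all of step (3) rests on this: you pass to a finite-index subgroup $Q_0\le Q$ fixing the fiber pointwise and invoke the classification of elementary convergence groups. But the fiber need not be finite. Nothing in the hypotheses forces $P\cap H$ to be elementary: $P$ is only assumed to be a parabolic subgroup (virtually a finite-type surface group by Proposition~\ref{prop:planarparabolics}), and $P\cap H$ can be, say, a nonabelian free group, in which case $Q$ is a non-elementary convergence group on $\overline{\mathbb{D}}$ and its limit set --- which the lemma identifies with $h^{-1}(\{p\})$ --- is a Cantor set. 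The paper says exactly this right after Corollary~\ref{cor:rank1} (``we expect that in general $h^{-1}(p)$ will be a Cantor set if $p$ is not uniquely or doubly accessible''), and the finiteness of the fiber is established there \emph{only} under the extra hypothesis that $P$ is two-ended, as a \emph{consequence} of this lemma, not an input to it. Note also that your argument is internally inconsistent for a finite fiber of cardinality $\ge 3$: an infinite convergence group cannot fix three points, so your reasoning would force $|h^{-1}(\{p\})|\le 2$, which is false in general. A secondary weak point is your justification that $P\cap H$ is infinite (``forcing $P\cap H$ to be infinite'' from an embedded sub-orbifold) --- this does not follow as stated; the paper derives it from the bounded-parabolic property together with the $E$-set property of $\Lambda$, which guarantee that only finitely many $P$-translates of $\Omega$ have closures meeting both $\{p\}$ and a compact fundamental domain $K$ for $P$ acting on $\Lambda\setminus\{p\}$.

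For the reverse inclusion the paper argues quite differently, and in a way that works for infinite fibers. It first rules out that $h^{-1}(\{p\})$ contains an arc (else by $Q$-invariance it would be all of $\mathbb{S}^1$, forcing $H=P$, contradicting $\Lambda_P=\{p\}$ and $\Lambda_H=\partial\Omega$), so the fiber is nowhere dense in $\mathbb{S}^1$. Then, given $x\in h^{-1}(\{p\})$, it chooses $y_n\to x$ with $y_n\notin h^{-1}(\{p\})$ and uses the compact fundamental domain $K$ and the finitely many translates $\Omega_1,\dots,\Omega_k$ of $\Omega$ meeting both $\{p\}$ and $K$ to manufacture elements $q_n\in Q$ with $q_n(y_n)$ trapped in a fixed compact set $L$ disjoint from $h^{-1}(\{p\})$, while $q_n(x)$ remains in the invariant set $h^{-1}(\{p\})$; this violates equicontinuity at $x$, so $x\in\Lambda_Q$. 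Your step (2), by contrast, is essentially sound (and the paper gets the inclusion $\Lambda_Q\subseteq h^{-1}(\{p\})$ even more directly from minimality of the limit set among compact invariant sets), but you will need to replace step (3) wholesale by an argument of the above kind that does not presuppose finiteness of the fiber.
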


\begin{proof}  First use Proposition \ref{prop:infends} to reduce to the case when the boundary is connected. 
Let $K\subset S^2\setminus\{p\}$ be a compact subset containing a fundamental domain for the action
of $P$ on $\Lambda\setminus\{p\}$.  We may find
$g\in P$ such that $g(\partial\Omega)\cap K\ne \emptyset$. 
Thus, $g(\Omega)$ contains in its closure
the point $p$ and at least one point of $K$. Such components form a finite set since $\Lambda$ is an $E$-set (Def. \ref{def:nses}). 
By considering a sequence of points in $\partial\Omega$ tending to $p$, we may pick an infinite sequence $(g_n)$ in $P$ that
maps $\Omega$ to components whose closures intersect both $\{p\}$ and $K$.
As there are only finitely many of them,  we may assume that $g_n(\Omega)=V$ for a fixed component 
$V$,
and all $n\ge 1$. Therefore, $(g_1^{-1}g_n)_n$
is an infinite collection of elements of $H\cap P$ which proves that $\Lambda_Q$ is not empty.

Since $hQ\subset Ph$, it follows that $h^{-1}(\{p\})$ is $Q$-invariant and compact, hence it contains
$\Lambda_Q$ which by definition is the minimal  compact invariant subset under the action of $Q$. 

The equality will follow from the fact that $p$ is a bounded parabolic point. We first rule out the case that $h^{-1}(\{p\})$ contains
an interval. If this were the case, then it would be the whole circle by invariance so that we would have $H=P$; this
contradicts that $\Lambda_P=\{p\}$ and $\Lambda_H=\partial\Omega$. 
Therefore, $h^{-1}(\{p\})$ is nowhere dense in $\mathbb{S}^1$.

Let $\Omega_1,\ldots \Omega_k$, be the $P$-translates
of $\Omega$ whose closures intersect both $\{p\}$ and $K$ and let us fix $g_1,\ldots g_k\in P$ such that
$g_j(\Omega_j)= \Omega$. Set $L= h^{-1}(\cup_{1\le j\le k} g_j(K))$.  This is a compact subset of $\overline{\mathbb{D}}$ disjoint from $h^{-1}(\{p\})$, hence from $\Lambda_Q$.
Let $x\in h^{-1}(p)$.  
We want to prove that the action of $Q$ is not equicontinuous at $x$.
With that in mind,
pick a point $y\in\mathbb{S}^1\setminus h^{-1}(\{p\})$ arbitrarily close to $x$. 
Note that $h(y)\in\partial\Omega\setminus\{p\}$ and since $K$ is a fundamental domain, we may find $g\in P$ and $j\in\{1,\ldots, k\}$ so that $g(h(y))\in K\cap\partial\Omega_j$.
It follows that $g_jg\in (H\cap P)$ so that we may find $q= h^{-1}g_jgh\in Q$ with $q(y)\in L$. This implies that $x\in\Lambda_Q$. 
Indeed, considering now a sequence $(y_n)$ in $\mathbb{S}^1\setminus h^{-1}(\{p\})$ tending to $x$, we obtain in this way a sequence $(q_n)$ in $Q$ such that $(q_n(x), q_n(y_n))\in h^{-1}(\{p\})\times L$: as $L$ and  $h^{-1}(\{p\})$ are disjoint compact subsets, $(q_n)_n$ cannot be equicontinuous at $x$.
\end{proof}

As already observed in the proof of Lemma~\ref{lma:omegafuchs}, if $h$ is not injective, i.e., if $h(x)=h(y)$ for some pair of points $x,y$ in $\mathbb{S}^1$, then $h(x)$ is a cut point of $\Lambda_G$ (we may build a Jordan arc 
in $\overline{\mathbb{D}}$, a crosscut,  that maps under $h$ to a separating Jordan curve), and so $h(x)$ is parabolic. 

Given a parabolic point $p$ with stabilizer $P$ and a simply connected component $\Omega$ of the ordinary set which
contains $p$ in its boundary, we will say that $p$ is {\it uniquely accessible from $\Omega$} if the above
map $h:\overline{\mathbb{D}}\to\overline{\Omega}$ is injective over $p$, i.e., $h^{-1}(\{p\})$ is 
a singleton.  Likewise, we say that $p$ is {\it doubly accessible from $\Omega$} if $h^{-1}(\{p\})$ consists of two points. We expect that in general $h^{-1}(p)$ will be a Cantor set if $p$ is not uniquely or doubly accessible. 

\begin{corollary} \label{cor:rank1}  
Let $(G,\PP)$ be a geometrically finite convergence group acting on $S^2$ and let $p\in S^2$ be a parabolic point with stabilizer $P\in\PP$. 
Assume that the component of its Bowditch boundary containing $p$ is not a singleton. 
Let $\Omega$ be any simply connected component of $S^2 \setminus \partial(G, \mathcal{P})$ such that $\partial\Omega$ contains $p$. If $P$ is two-ended, then $p$ is either uniquely or doubly accessible from $\Omega$. 
\end{corollary}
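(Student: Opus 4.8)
The plan is to read the statement off Lemma~\ref{lma:parabolicvsfuchs} combined with the structure of elementary convergence groups. Since the component $\Lambda_0$ of $\partial(G,\mathcal{P})$ containing $p$ is a non-degenerate continuum, $(G,\mathcal{P})$ is non-elementary and, as in the discussion preceding Lemma~\ref{lma:parabolicvsfuchs}, $\Omega$ is a simply connected component of the ordinary set. We therefore have the Carath\'eodory--Torhorst extension $h\colon\overline{\mathbb{D}}\to\overline{\Omega}$ of the map uniformizing the action of $H=\Stab(\Omega)$, and we set $Q=h^{-1}\circ(P\cap H)\circ h$, a group of homeomorphisms of $\overline{\mathbb{D}}$. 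As already used in the proof of Lemma~\ref{lma:parabolicvsfuchs}, this is a convergence action --- it is carried by $h$ from the convergence action of $H$ on $\overline{\Omega}\subseteq S^2$, with $H$ acting properly discontinuously on the disk $\Omega$ --- and that lemma gives $\Lambda_Q=h^{-1}(\{p\})$, a non-empty set; in particular $P\cap H$ is infinite.

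The argument is then short. Conjugation by $h|_{\mathbb{D}}$ identifies $Q$ with $P\cap H$, an infinite subgroup of the two-ended group $P$; an infinite subgroup of a two-ended group has finite index, so $Q$ is two-ended, hence virtually cyclic, hence contains no non-abelian free subgroup. By the Gehring--Martin/Tukia alternative for convergence groups --- a convergence group either is elementary or contains a rank-two free subgroup (see \cite{GM1}) --- the convergence action of $Q$ is elementary, i.e.\ $|\Lambda_Q|\le 2$. Combining with $\Lambda_Q=h^{-1}(\{p\})\neq\emptyset$, the set $h^{-1}(\{p\})$ consists of one or two points, which is precisely the statement that $p$ is uniquely or doubly accessible from $\Omega$.

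The only delicate point is the assertion that $Q$ acts on $\overline{\mathbb{D}}$ as a convergence group with limit set exactly $h^{-1}(\{p\})$ --- equivalently, keeping track of how limit points transform under the boundary map $h\colon\mathbb{S}^1\to\partial\Omega$, which need not be injective. I expect this to be the main (and essentially only) obstacle; however it is exactly the content that already had to be established in Lemma~\ref{lma:parabolicvsfuchs} in order to make sense of $\Lambda_Q$, so the corollary requires no additional input beyond the elementary-group alternative quoted above.
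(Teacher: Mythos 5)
Your proposal is correct and follows essentially the same route as the paper: both reduce to Lemma~\ref{lma:parabolicvsfuchs} giving $\Lambda_Q=h^{-1}(\{p\})\neq\emptyset$, deduce that $Q$ is infinite and hence two-ended, and conclude $|\Lambda_Q|\le 2$. The only (immaterial) difference is at the last step, where the paper classifies a generator of a finite-index cyclic subgroup of $Q$ as loxodromic or parabolic, while you invoke the elementary/non-elementary dichotomy for convergence groups.
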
 

\begin{proof} Recall the notation of Lemma \ref{lma:parabolicvsfuchs}: $Q$ is defined as the Fuchsian group such that $h\circ Q=(P\cap H)\circ h$, 
where $h$ is the extension of the homeomorphism conjugating the action of the stabilizer $H$ of $\Omega$. The accesses to $p$ from $\Omega$ 
are in bijection with the points of $\Lambda_Q$.  By Lemma \ref{lma:parabolicvsfuchs}, the limit set is non-empty so $Q$ is infinite.

Since we assume that $P$ is two-ended, this is also the case of $Q$. Hence there is a finite index cyclic subgroup in $Q$ that is generated either
by a loxodromic element, implying the point $p$ is doubly accessible, or by a parabolic element, 
implying the point $p$ is uniquely accessible.
\end{proof}

We note that the converse of Corollary~\ref{cor:rank1} does not hold in full generality. 
Here is a counter-example: pick a convex-cocompact Kleinian group $G$ that uniformizes a hyperbolic 3-manifold with totally geodesic boundary; consider one component
$F$ of its boundary and choose a compact $\pi_1$-injective proper subsurface $S$ in $F$, with a non-Abelian free fundamental group $P$, such that each component of the complement of $S$ has also non-Abelian free fundamental group. The pair $(G,\mathcal{P})$, where $\mathcal{P}$ consists of the conjugates of $P$, is a planar relatively hyperbolic group pair.  
To see this, $P$ stabilises a component $\Omega_F$ of the ordinary set, hence the hyperbolic convex hull $K$ of $\Lambda_P$ in $\Omega_F$ is connected and simply connected in $\Omega_F$, and precisely invariant under $P$, i.e., if $g(K)\cap K\ne \emptyset$ for some $g\in G$, then $g\in P$. Therefore, as $\Lambda_G$ is a  Sierpi\'nski carpet, $G(K)$ is a null sequence that satisfies the assumptions of Moore's Theorem \ref{thm:moore}: by collapsing each component
of $G(K)$ we obtain a geometrically finite convergence group action on $S^2$ for which $P$ is parabolic with fixed point $p$. Moreover, the parabolic point $p$ is on the boundary of countably many components $\Omega$ such that $\Stab(\Omega)\cap P $
is cyclic but $P$ is not, and $p$ is uniquely accessible from each component. 

\begin{proposition}\label{prop:parabsubgp}  Let $p$ be a parabolic point with stabilizer $P$ of a geometrically finite convergence group on $S^2$, $(G,\PP)$. We assume that the component of $\partial(G, \PP)$ containing $p$ is not a singleton.
Let $\Omega_p$ denote the union of the 
components {of the ordinary set} which contain $p$ on their
boundary. The action of $P$ on $S^2\setminus (\{p\}\cup\Omega_p)$ is cocompact and
the set of components of $\Omega_p$ forms finitely many orbits.

In particular, if $p$ is in the boundary of no ordinary component, then $P$ acts cocompactly on $S^2\setminus\{p\}$.
\end{proposition}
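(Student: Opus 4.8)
\emph{Proof plan.} The strategy is to exploit the defining feature of a bounded parabolic point — a compact fundamental domain on the limit set — and then transport that compactness onto the ordinary companion $S^2\setminus(\{p\}\cup\Omega_p)$, the point being that the complementary regions of the limit set are suitably controlled near $p$. Write $\Lambda$ for the limit set and $\Omega=S^2\setminus\Lambda$ for the ordinary set, and fix (since $p$ is a bounded parabolic point) a compact set $K\subseteq\Lambda\setminus\{p\}$ with $PK=\Lambda\setminus\{p\}$. Note the decomposition
$$S^2\setminus(\{p\}\cup\Omega_p)=(\Lambda\setminus\{p\})\ \sqcup\ (\Omega\setminus\Omega_p),$$
where $\Omega\setminus\Omega_p$ is the union of the components $O$ of $\Omega$ with $p\notin\overline O$; this is a locally compact $P$-invariant space, and the ``in particular'' is simply the case $\Omega_p=\emptyset$.

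The one genuinely non-formal ingredient I would isolate first is the claim that \emph{the components of $\Omega$ form a null-sequence}. When $\Lambda$ is connected this is exactly the $E$-set property used in the proof of Lemma~\ref{lma:omegafuchs} (local connectedness from Corollary~\ref{cor:lc} together with Whyburn's theorem). In general I would reduce to that case: let $\Lambda_p$ be the component of $\Lambda$ containing $p$ and $H_p=\Stab(\Lambda_p)$; by Proposition~\ref{prop:infends} and Bowditch's results $(H_p,\cdot)$ is relatively hyperbolic with connected Bowditch boundary $\Lambda_p$, it acts geometrically finitely on $S^2$ with limit set $\Lambda_p$, and $p$ is bounded parabolic for it with stabilizer $P$. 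Lemma~\ref{lma:omegafuchs} then gives finitely many $H_p$-orbits of (simply connected) complementary domains of $\Lambda_p$, so these form a null-sequence; combining this with the null-sequence of the components of $\Lambda$ (Proposition~\ref{prop:infends}) — each non-degenerate one being a Peano continuum — a Hausdorff-limit argument shows that the closures of infinitely many components of $\Omega$ of diameter $\geq\delta$ would converge to a non-degenerate continuum lying in a single component $\Lambda_0$ of $\Lambda$ and in a single complementary domain of $\Lambda_0$, which is impossible.

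Granting the null-sequence claim, I would finish in two steps. For the orbit count: given a component $O$ of $\Omega_p$, pick $q\in\partial O\setminus\{p\}$ and $g\in P$ with $gq\in K$; then $gO\in\Omega_p$ and $\partial(gO)\cap K\neq\emptyset$, so $\operatorname{diam}(gO)\geq d(K,p)>0$, and the null-sequence property leaves only finitely many such components; hence $\Omega_p$ has finitely many $P$-orbits. For cocompactness: fix $\delta_0$ with $0<\delta_0<d(K,p)$, let $N$ be the closed $\delta_0$-neighbourhood of $K$ (so $p\notin N$), and let $O^{(1)},\dots,O^{(m)}$ be the finitely many components of $\Omega$ with $\partial O^{(j)}\cap K\neq\emptyset$, $p\notin\overline{O^{(j)}}$ and $\operatorname{diam}(O^{(j)})\geq\delta_0$. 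Put
$$C=\bigl(N\setminus\Omega_p\bigr)\ \cup\ \overline{O^{(1)}}\cup\cdots\cup\overline{O^{(m)}}.$$
Then $C$ is compact (being $N\cap(S^2\setminus\Omega_p)$ — closed in $N$ — together with finitely many compacta) and $C\subseteq S^2\setminus(\{p\}\cup\Omega_p)$ (the $\overline{O^{(j)}}$ avoid $p$ and $\Omega_p$ by construction, and $N\setminus\Omega_p$ avoids $p$ since $d(p,N)\geq d(p,K)-\delta_0>0$). Finally $PC$ is the whole space: a point of $\Lambda\setminus\{p\}$ is moved into $K\subseteq C$; a point in a component $O$ of $\Omega\setminus\Omega_p$ is carried by an element of $P$ sending a boundary point of $O$ into $K$ to a translate $gO$, which is still disjoint from $\{p\}\cup\Omega_p$ because $g$ fixes $p$, and which either equals one of the $O^{(j)}$ or has diameter $<\delta_0$ and hence closure inside $N$ — in either case the image lies in $C$. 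Thus $(S^2\setminus(\{p\}\cup\Omega_p))/P$ is compact.

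The main obstacle is the null-sequence statement for the components of $\Omega$ in the possibly disconnected case; once it is available the rest is bookkeeping, the only subtlety being that $S^2\setminus(\{p\}\cup\Omega_p)$ is neither open nor closed in $S^2$, so one must take some care (e.g.\ in checking that $N\setminus\Omega_p$ is compact and that translating never pushes a component of $\Omega\setminus\Omega_p$ into $\Omega_p$).
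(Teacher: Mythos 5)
Your proof is correct and follows essentially the same route as the paper: a compact fundamental domain $K$ for $P$ on $\Lambda\setminus\{p\}$, the null-sequence ($E$-set) property of the complementary components, translation of components of the ordinary set so that their boundaries meet $K$, and a diameter lower bound to show $\Omega_p$ has finitely many orbits. The paper simply reduces at the outset to connected $\Lambda_G$ via Proposition~\ref{prop:infends} and leaves the compact set implicit, whereas you make the compact set $C$ and the null-sequence claim in the disconnected case explicit.
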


\begin{proof} We may assume that  $\Lambda_G$ is connected according to Proposition \ref{prop:infends}.
Let $K\subset S^2\setminus\{p\}$ be a compact subset containing a fundamental domain for the action
of $P$ on $\Lambda_G\setminus\{p\}$.

Let $L$ denote the closure of the union of the components of the ordinary set disjoint from $\Omega_p$ whose boundaries intersect $K$.
Since $\Lambda_G$ is an $E$-set, we may deduce that $L$ is a compact subset of $S^2\setminus\{p\}$. 
{For} any component $\Omega$ disjoint from $\Omega_p$, we may find
$g\in P$ such that $g(\partial\Omega)\cap K\ne \emptyset$ {holds so that $g(\Omega)\subset L$}. It follows that the action is cocompact on 
$S^2\setminus(\{p\}\cup\Omega_p)$.

We now consider components $\Omega$ which contain $p$ on their boundary. As above, we may find
$g\in P$ such that $g(\partial\Omega)\cap K\ne \emptyset$. Thus, $g(\Omega)$ contains in its closure
the point $p$ and at least one point of $K$. Such components form a finite set since $\Lambda_G$ is an $E$-set.
\end{proof}

We conclude with some general properties of the ordinary set, which are proved as for Kleinian groups. The next proposition
was already known, but we were unable to find a formal proof in the literature. 

\begin{proposition}\label{prop:ordcomp} Let $G$ be a convergence group acting on $S^2$. Then the ordinary set has zero, one, two or infinitely many
components. Furthermore, if the ordinary set is non-empty, then the limit set $\Lambda_G $ has no interior.
\end{proposition}

\begin{proof}
If the action of the convergence group $G$ is elementary, then $\Omega_G$ is connected. Otherwise, let us assume that the action is non elementary and that $\Omega_G$ has at least two, but finitely many
components.

Considering a finite-index subgroup if necessary, one may assume that the group $G$ fixes each component. Therefore, $\Lambda_G$ is the 
boundary of each component of the ordinary set. This is the main point and follows from the fact that the boundary of each component is closed, contained in $\Lambda_G$, and $G$-invariant, since each component is $G$-invariant.

{As the limit set is infinite,} the group $G$ contains a loxodromic element $g$ with fixed points $a$ and $b$ in $\Lambda_G$. 

Consider a component $\Omega$ of the ordinary set and a point $x\in \Omega$. We may find a path $c_0$  in $\Omega$ that joins $x$ to $g(x)$. The $g$-orbit 
$c_n=g^n(c_0)$, $n\in\Z$, defines a path which joins $a$ and $b$ in $\Omega$ by the convergence property. Its image contains an arc $c$ also  joining $a$ and $b$, i.e., a path without self-intersections.

Since, as remarked above, $\Lambda_G$ is the boundary of every component, we can proceed similarly with a second
component $\Omega'$ and denote by $c'$ an arc in $\Omega'$ which joins $a$ and $b$. Then $\{a,b\}\cup c \cup c'$ is a Jordan curve that separates $\Lambda_G$, for there are points of both $\Omega$ and $\Omega'$ on each side of the Jordan curve.
If there were a third component in the complement of $\Lambda_G$ it would sit on one side of this Jordan curve $\{a,b\}\cup c \cup c'$. Therefore, the boundary of this new component could not be $\Lambda_G$, which is absurd. 
Therefore, if there are more than two components, there must be infinitely many. 

To prove the last sentence, suppose that the ordinary set is non-empty and there is an open disc in $\Lambda_G$ around some point $x$ of $\Lambda_G $.  Take a point $y$ in $\Lambda_G$ which is in the closure of some component of the ordinary set.  By \cite[Theorem 2S]{tukia:convergence_groups}, $x$ is in the closure of the orbit of $y$ which is a contradiction. 
\end{proof} 

\begin{corollary} \label{cor:1or2} Let $(G,\PP)$ be a geometrically finite convergence group acting on $S^2$. If $\Omega_G$ is non empty and connected, then $\Lambda_G $ is totally disconnected. If furthermore $G$ is finitely generated, then $G$ is covered by a Kleinian group.  If $\Omega_G$ has exactly
two components, then $\Lambda_G$ is a circle and the action of $G$ is either isomorphic to a Fuchsian group of finite coarea, or to a degree 2 extension of such a group. \end{corollary}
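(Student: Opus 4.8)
The plan is to treat the two cases separately, leaning on Proposition \ref{prop:ordcomp} and the dichotomy it establishes. First suppose $\Omega_G$ is non-empty and connected. If $\Lambda_G$ had a non-trivial (non-singleton) connected component, then by Proposition \ref{prop:infends} its stabilizer would be a non-trivial relatively hyperbolic subgroup, and one could run the argument from Proposition \ref{prop:ordcomp}: a non-trivial continuum component forces infinitely many components of the ordinary set, contradicting connectedness of $\Omega_G$. Hence every component of $\Lambda_G$ is a point, i.e.\ $\Lambda_G$ is totally disconnected; being compact, perfect (it is the limit set of a non-elementary convergence group), and metrizable, it is a Cantor set. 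Then, if $G$ is finitely generated, I would invoke Corollary \ref{cor:bow} to express $G$ as a finite graph of finite/peripheral groups with finite edge groups, note that Proposition \ref{prop:planarparabolics} forces each peripheral $P\in\PP$ to be a virtual finite-type surface group, hence residually finite (virtually a surface group), so Theorem \ref{thm:Cantor}/\ref{thm:GOG} applies and $G$ is virtually a free product of a free group with finite-index subgroups of peripheral surface groups. Each such free factor is a Kleinian group (free groups uniformize Schottky-type handlebodies, finite-type surface groups are Fuchsian, hence Kleinian), and a free product of convex-cocompact or geometrically finite Kleinian groups is again Kleinian by Klein–Maskit combination; passing to the finite-index subgroup and then observing this realizes the convergence action of $G$ up to the finite normal kernel (using \cite[Theorem 1.3]{peter:cyril} as in Section \ref{sec:S2}) shows $G$ is covered by a Kleinian group.

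For the second case, suppose $\Omega_G$ has exactly two components. Passing to the index-one-or-two subgroup $G'$ that fixes each of the two components, Proposition \ref{prop:ordcomp} (and the argument inside its proof) shows $\Lambda_G$ is the common boundary $\partial\Omega$ of both components, $\Lambda_G$ is an infinite connected compact set, and it is locally connected by Corollary \ref{cor:lc}. I would then show $\Lambda_G$ is a circle: each component $\Omega$ is simply connected (a Jordan-curve-in-$\Omega$ separation argument as in Lemma \ref{lma:omegafuchs}), and the stabilizer $H=\Stab(\Omega)=G'$ acts on $\overline{\Omega}$ conjugate, via the Carathéodory extension $h:\overline{\mathbb D}\to\overline\Omega$, to a Fuchsian-type action; if $h$ were non-injective we would produce a cut point of $\Lambda_G$, forcing (as in the remark after Lemma \ref{lma:parabolicvsfuchs}) a parabolic point, but having $\Lambda_G$ separate the sphere into exactly two Jordan domains with a shared cut point would let a third ordinary component appear on one side, contradicting Proposition \ref{prop:ordcomp}; so $h$ is a homeomorphism $\overline{\mathbb D}\to\overline\Omega$ and $\Lambda_G=\partial\Omega$ is a Jordan curve. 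Finally, the action of $G'$ on $\Lambda_G\cong S^1$ is a geometrically finite convergence action on the circle, hence conjugate to a Fuchsian group, and the complement being two disks each carrying the same group means this Fuchsian group has finite coarea (the quotient surface is closed or has finitely many cusps, by Proposition \ref{prop:planarparabolics} applied to the peripheral subgroups). If $G'\neq G$, then $G$ is a degree-2 extension of this Fuchsian group, swapping the two components.

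The main obstacle I anticipate is the step where $h$ is shown to be injective in the two-component case — i.e.\ ruling out that $\Lambda_G$ is a non-circular locally connected continuum (a "theta-curve"–like limit set). The key leverage is that a non-injectivity of $h$ produces a crosscut whose $h$-image is a separating Jordan curve meeting $\Lambda_G$ in a single cut point; combined with the fact that $\Lambda_G$ must be the boundary of *both* ordinary components, any such separation would strand one of the two components away from a piece of $\Lambda_G$, contradicting that each component has all of $\Lambda_G$ as its boundary. Making this topological argument airtight — handling the possibility of infinitely many cut points and the precise structure of the accessible points — is the delicate part; everything else is assembling results already established in Sections \ref{sec:general} and \ref{sec:S2}.
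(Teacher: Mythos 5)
Your overall architecture matches the paper's: in the connected case, rule out a non-trivial component of $\Lambda_G$ by showing it would have to be a dendrite (the paper does this directly via Lemma \ref{cor:nodendrite} — such a component neither separates the plane nor contains an open disk, hence is simply connected and contains no simple closed curve — rather than by re-running Proposition \ref{prop:ordcomp}, but the content is the same); in the two-component case, pass to the index-$\le 2$ subgroup fixing both components, identify $\Lambda_G$ as their common boundary, and use the Carath\'eodory--Torhorst extensions plus the crosscut argument of Lemma \ref{lma:omegafuchs} to force injectivity on the circle. Your "stranding one component away from a piece of $\Lambda_G$" formulation is exactly the paper's mechanism; the alternative phrasing about "a third ordinary component appearing" is not what happens and should be dropped. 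For the final identification the paper simply invokes Lemma \ref{lma:omegafuchs} (the stabilizer acts properly discontinuously on the open disk $\Omega_+$, hence is a $2$-orbifold group), whereas you invoke the convergence group theorem on the circle; both are legitimate, yours being the heavier tool.

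The one genuine gap is the claim that $G$ is \emph{covered} by a Kleinian group when $\Omega_G$ is connected and $G$ is finitely generated. "Covered" in this paper is a statement about the action, not the abstract group: it requires a Kleinian group $K$, an isomorphism $\rho\co K\to G$, and an equivariant degree-one map $\phi\co\hat{\mathbb{C}}\to S^2$ intertwining the two actions. Your route through Corollary \ref{cor:bow}, Theorem \ref{thm:Cantor} and Klein--Maskit combination produces a Kleinian group abstractly isomorphic to a finite-index subgroup of $G$, but never relates its action on $\hat{\mathbb{C}}$ to the given convergence action of $G$ on $S^2$; the semiconjugacy $\phi$ is not constructed, and the promotion from a finite-index subgroup to $G$ itself is also left open. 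The paper sidesteps all of this by citing \cite[Corollary 5.4]{martin:skora}, which is precisely the statement that a finitely generated convergence group on $S^2$ with totally disconnected limit set is covered by a Kleinian group. Either cite that result, or supply the missing construction of $\phi$ (which essentially amounts to reproving it).
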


\begin{proof} Let us assume that $\Omega_G$ is connected and let us assume for contradiction that $\Lambda$ is a component of the limit set with at least two points. Then $\Lambda$ is the limit set of 
its stabilizer $H$ which is also hyperbolic relative to virtual surface groups, cf. Proposition \ref{prop:infends}. Since $\Lambda$ 
does not separate the plane and does not contain an open disk, it is
simply connected. It follows that $\Lambda$ cannot contain a simple closed curve and, since it is locally connected by Corollary~\ref{cor:lc}, it is a dendrite, which is impossible by Lemma~\ref{cor:nodendrite}.
So $\Lambda_G$ is totally disconnected. Now, when $G$ is finitely generated, it follows from \cite[Corollary 5.4]{martin:skora} that $G$ is covered by a Kleinian group.

Assume $\Omega_G$ has two components $\Omega_\pm$.  Taking an index 2 subgroup if necessary, we may assume that both components
are invariant under $G$ so that $\overline{\Omega_+} \cap \overline{\Omega_-}=\Lambda_G$, the minimal $G$-invariant set.   This implies that $\Lambda_G$ is  their common boundary and is connected by \cite[Cor.\,VI.2.11]{whyburnantop}, 
hence locally connected
by Corollary \ref{cor:lc}, and that $\Omega_\pm$ are simply connected.
Thus, by the Carath\'eodory-Torhorst theorem, there are continuous onto maps $\varphi_\pm:\overline{\mathbb{D}}\to \overline{\Omega_\pm}$ from
the closed unit disk that restrict to homeomorphisms between their interiors. Since both images of the unit circle coincide, 
reasoning as in the proof of Lemma~\ref{lma:omegafuchs}, we may conclude that $\Lambda_G$ is a Jordan curve. Finally Lemma~\ref{lma:omegafuchs} enables us to conclude in this case.
For more general results, see \cite{martin:tukia:invpairs}.
\end{proof}

\section{Blowing-up rank-one parabolic points}\label{sec:blow}

\begin{definition} \label{def:parabolicsplit} Let $(G,\PP)$ be a geometrically finite convergence group on $S^2$. We write
$\PP=\PP_1\sqcup \PP_2$ where $\PP_1$ consists of all stabilizers of rank 1 parabolic points, i.e., whose stabilizers are $2$-ended, and $\PP_2=\PP\setminus\PP_1$. \end{definition} 

\begin{theorem}\label{thm:blowup} Let $(G,\PP)$ be a geometrically finite convergence group on $S^2$
and
$\PP=\PP_1\cup \PP_2$ as in Definition \ref{def:parabolicsplit}.  Then there exists a geometrically finite convergence group action of $(G,\PP_2)$ on $S^2$
 and an equivariant degree-$1$ continuous map
$\phi:S^2\to S^2$ mapping the Bowditch boundary of $(G,\PP_2)$ onto that of $(G,\PP)$.
\end{theorem}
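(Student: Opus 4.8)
The plan is to construct the new boundary $\partial(G,\PP_2)$ by ``blowing up'' each rank-one parabolic fixed point of $\PP_1$ into a circle, and then to recognize the resulting action as a geometrically finite convergence action on $S^2$. Concretely, for a rank-one parabolic point $p$ with stabilizer $P\in\PP_1$, Lemma \ref{lma:translation} tells us (after passing to the orientation-preserving, infinite cyclic part and then accounting for the finite/dihedral extension as in Proposition \ref{prop:elementary}) that $P$ is virtually $\Z$, so $P$ is two-ended. By Proposition \ref{prop:parabsubgp}, $P$ acts cocompactly on $S^2\setminus(\{p\}\cup\Omega_p)$, where $\Omega_p$ is the union of ordinary components with $p$ on their boundary, and these components fall into finitely many $P$-orbits; moreover by Corollary \ref{cor:rank1} each such component accesses $p$ either uniquely or doubly. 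The idea is that near $p$ the set $\partial(G,\PP)$ looks like finitely many ``strands'' accumulating on $p$, together with finitely many ordinary components, all permuted by a virtually cyclic group acting ``by translation''. First I would make this picture precise: I'd show that a punctured neighborhood of $p$ in $S^2$, cut along $\partial(G,\PP)$, is a union of finitely many ``$P$-annular'' pieces, so that removing $p$ and inserting a circle $C_p$ (the boundary of a small disk around $p$) produces a new sphere on which the strands accumulating on $p$ now land on $C_p$.

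The cleanest route I would take is \emph{not} to surger $S^2$ directly but to pass through a hyperbolic space model. Realize $(G,\PP)$ geometrically finitely on a proper hyperbolic space $X$ with $\partial X=\partial(G,\PP)$ (Definition \ref{def:original rel hyp}). For each rank-one parabolic point $p$ with stabilizer $P\in\PP_1$, the horoball at $p$ has a $P$-cocompact horosphere which is a line-like (quasi-)$\R$; replacing each such horoball by a suitable ``cusped'' metric space whose boundary at infinity adds a circle rather than a point — equivalently, attaching to the cusp a half-open annulus $\times$ something so that the ideal boundary of the modified cusp is $S^1$ instead of a point — yields a new proper hyperbolic space $X'$ on which $G$ still acts geometrically finitely, now with peripheral structure $\PP_2$ (the old rank-one parabolics become conical, having acquired a genuine limit circle, while $\PP_2$ parabolics are untouched). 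This is the construction underlying combination theorems (Dahmani, Bowditch, Yaman); the existence of such $X'$ with $\partial X'=\partial(G,\PP_2)$ and with the old $\PP_1$-fixed points now conical is exactly the assertion that $(G,\PP_2)$ is relatively hyperbolic, and it gives for free a $G$-equivariant continuous map $\partial X'\to\partial X$ collapsing each inserted circle to the corresponding parabolic point. So the two tasks reduce to: (i) $\partial(G,\PP_2)$ embeds in $S^2$, and (ii) the collapsing map $\phi\co S^2\to S^2$ has degree $1$.

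For (i): $\partial(G,\PP_2)$ is obtained from $\partial(G,\PP)\subset S^2$ by replacing a null sequence of parabolic points (the $G$-orbit of finitely many $p$'s; null by Proposition \ref{prop:nullseq}(1) applied to $P$ as a relatively quasiconvex — indeed parabolic — subgroup, or directly by the $E$-set property) with small circles. Equivalently, remove from $S^2$ the union $G\cdot D_p$ of a null sequence of pairwise disjoint closed round disks $D_p$ centered at the parabolic points, each small enough that $D_p\cap\partial(G,\PP)$ lies in a controlled ``strand'' configuration, and take the resulting planar set; since the disks are a null sequence, the quotient is again homeomorphic to $S^2$ by a Moore-type collapse in the other direction. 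The action of $G$ on this modified sphere is a convergence action: this is checked exactly as in Theorem \ref{thm:moore}-type arguments, using that $G\cdot\{p\}$ is a null sequence and that $G$ already acts as a convergence group on $S^2$. One then verifies that every point of $\partial(G,\PP_2)$ is conical or bounded parabolic: $\PP_2$-parabolics and conical points of the old action are unchanged, and a point $q$ on an inserted circle $C_{p}$ is conical because, $P$ being two-ended and acting ``by translation'' on the strands at $p$ (Lemma \ref{lma:translation}, Corollary \ref{cor:rank1}), one finds a sequence in $P$ pushing $q$ off and collapsing the rest of $C_p$ — the analogue of the computation in the example after Corollary \ref{cor:rank1}.

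For (ii): the map $\phi$ collapses each inserted round circle $C_p$ (bounding the disk $D_p$) to the point $p$, and is a homeomorphism elsewhere; such a map is the uniform limit of homeomorphisms of $S^2$ (shrink each $D_p$ to $p$ through embeddings, legitimate since the $D_p$ form a null sequence), hence has degree $1$, and it is $G$-equivariant by construction.

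\medskip

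The main obstacle I anticipate is step (i)'s local model at a rank-one parabolic point: one must show that after deleting $p$ the set $\partial(G,\PP)$ meets a small punctured neighborhood of $p$ in precisely the ``boundary strands'' predicted by Proposition \ref{prop:parabsubgp} and Corollary \ref{cor:rank1} — finitely many $P$-orbits of arcs/components limiting onto $p$ from the two sides of the cyclic direction — so that inserting a circle glues these strands up consistently and $P$ acts on $C_p$ with the correct (conical) dynamics rather than, say, creating a new parabolic or breaking planarity. Making the ``$p$ looks locally like finitely many $P$-annuli'' statement rigorous, uniformly in the $G$-orbit, is the technical heart; everything else (null-sequence collapses preserving $S^2$ and the convergence property, degree-$1$ of $\phi$, geometric finiteness of $(G,\PP_2)$) is by now standard given the earlier results in this section.
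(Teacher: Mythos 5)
Your overall strategy (surger the sphere along a $G$-invariant null family attached to the rank-one parabolic points, check that the modified action is still a geometrically finite convergence action, then collapse back to get $\phi$) is the same as the paper's, but the central topological model is wrong. When a two-ended subgroup $P\in\PP_1$ is removed from the peripheral structure, its fixed point $p$ does not blow up into a circle: it blows up into exactly \emph{two} points, namely the two-point limit set that the (now non-parabolic) two-ended group $P$ acquires in $\partial(G,\PP_2)$. This is what the paper's construction produces: it removes a $G$-invariant null family of \emph{pairs} of horoballs $H_P$ (the complement of a strip containing $\Lambda_G\setminus\{p\}$ in a chart where the generator acts as $z\mapsto z+1$), takes the completion $\hat Y$ of the complement in an intrinsic diameter metric, and shows each rank-one parabolic point has precisely two preimages, one for each end of the strip. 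Your claim that every point $q$ of an inserted circle $C_p$ is conical cannot be repaired: the only elements of $G$ carrying $C_p$ to itself form the two-ended group $P$, which acts on $C_p$ with at most a two-point limit set, while any sequence $g_n$ leaving $P$ sends $C_p$ into a null sequence of disjoint circles and hence cannot separate $q$ from the other points of $C_p$. So a generic point of $C_p$ is neither conical nor bounded parabolic; if the circles were in the limit set the action would fail to be geometrically finite (and in the basic test case of a once-punctured-torus Fuchsian group, $\partial(G,\emptyset)=\partial F_2$ is a Cantor set, which contains no circle). The inserted circles must lie in the ordinary set except for two points each.

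A second, independent gap is your assertion that ``the quotient is again homeomorphic to $S^2$ by a Moore-type collapse in the other direction.'' Moore's theorem only runs in the collapsing direction; showing that the blown-up object embeds in $S^2$ is precisely the hard part of the proof, and you flag the local model as the main obstacle without resolving it. The paper handles this by proving that $\hat Y$ is a Peano continuum without local cut points containing no $K_5$ or $K_{3,3}$ (Claytor's theorem), via a perturbation argument pushing any embedded finite graph into the injectivity locus of $\pi:\hat Y\to\overline Y$, and then invokes the extension theorem of \cite{ph:unifplanar} to extend the convergence action from the planar set $\hat Y$ to all of $S^2$. Your alternative route through a modified cuspidal hyperbolic space would indeed give relative hyperbolicity of $(G,\PP_2)$ abstractly, but it does not by itself produce the action on $S^2$, which is the content of the theorem; and as written it again inserts a circle at infinity of each modified cusp, repeating the first error. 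The degree-one and equivariance parts of your argument are fine, but they rest on the incorrect model.
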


In other words, the geometrically finite action of $(G,\PP)$ is covered by the geometrically finite action of $(G,\PP_2)$.

Before giving the proof, we pause for some topological facts, starting with the following particular case  of Moore's theorem \cite{moore}.  

\begin{theorem}[Moore]\label{thm:moore} Let $\CCC$ be a pairwise disjoint collection of  compact and connected subsets of the sphere $S^2$ such that each $K \in \CCC$ is not a point.  Assume that each element has a connected complement
and the set $\CCC$ forms a null-sequence. Let $\sim$ be the equivalence relation generated by $x\sim y$ if there is some $K\in\CCC$ that contains $\{x,y\}$. Then
$Z=S^2/\sim$ is a topological sphere when endowed with the quotient topology. \end{theorem}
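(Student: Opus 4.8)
\textbf{Proof plan for Moore's theorem (Theorem~\ref{thm:moore}).}

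The plan is to verify the hypotheses of the classical Moore decomposition theorem, which states that if $\sim$ is an upper semicontinuous decomposition of $S^2$ into continua none of which separate $S^2$, then $S^2/\sim$ is homeomorphic to $S^2$. So the work splits into two parts: (i) check that $\sim$ is a genuine decomposition of $S^2$ into non-separating continua, and (ii) check upper semicontinuity of the decomposition, which is where the null-sequence hypothesis does its job.

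For part (i), I would first argue that the equivalence classes of $\sim$ are exactly the elements of $\CCC$ together with the singletons $\{x\}$ for $x$ not lying in any $K\in\CCC$. Since the elements of $\CCC$ are pairwise disjoint, the relation $x\sim y \iff x=y$ or $\{x,y\}\subset K$ for some $K\in\CCC$ is already transitive, so no further generation is needed and the classes are as claimed. Each such class is a compact connected set (a point or some $K\in\CCC$) with connected complement: for singletons this is clear, and for the $K\in\CCC$ it is a hypothesis. Thus every element of the decomposition is a non-separating subcontinuum of $S^2$.

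For part (ii), upper semicontinuity, I would show: given an equivalence class $C$ and an open set $U\supseteq C$, there is an open set $V$ with $C\subseteq V\subseteq U$ that is saturated (a union of equivalence classes). Equivalently, one shows that if $x_n\to x$ and $C_n$ is the class of $x_n$, then $\limsup C_n$ is contained in the class of $x$. The only way this can fail is if infinitely many of the $C_n$ are distinct elements of $\CCC$ whose diameters do not shrink to $0$; but the null-sequence condition forbids exactly this. Concretely: fix $\varepsilon$ with $C$ in the $\varepsilon$-neighborhood lying inside $U$; only finitely many $K\in\CCC$ have diameter $\geq \varepsilon/2$, and each of those that is disjoint from $C$ has positive distance from $C$, so we may choose a small enough ball around $C$ (minus those finitely many bad $K$'s, after enlarging $U$ slightly to still contain them if they meet $C$) so that every class meeting it has diameter $<\varepsilon/2$ and hence lies in $U$. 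This produces the required saturated neighborhood. The one subtlety to handle carefully is the case where $C$ itself is some large element of $\CCC$: then one uses that $C$ is compact and the finitely many other large elements of $\CCC$ are at positive distance from it, while all remaining elements are small.

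The main obstacle is the bookkeeping in the upper semicontinuity argument: one must simultaneously control the finitely many ``large'' members of $\CCC$ (using positive distance / compactness) and the infinitely many ``small'' ones (using the null-sequence bound to keep them inside a prescribed neighborhood). Once upper semicontinuity and the non-separating-continua condition are in hand, the conclusion that $S^2/\sim$ is a sphere is precisely the statement of Moore's theorem from \cite{moore}, and no further argument is needed; in particular the quotient is metrizable and simply connected by standard decomposition-space facts, and a simply connected closed surface is $S^2$.
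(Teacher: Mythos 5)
Your proposal is correct and is essentially the argument the paper intends: the paper states this result without proof, presenting it as ``the following particular case of Moore's theorem,'' so the only content to supply is exactly what you supply, namely that the classes of $\sim$ are the elements of $\CCC$ together with singletons (immediate from pairwise disjointness), that each class is a non-separating continuum (by hypothesis), and that the null-sequence condition is what makes the decomposition upper semicontinuous. Your bookkeeping for upper semicontinuity --- separating the finitely many large elements of $\CCC$, which are at positive distance from the given class, from the infinitely many small ones controlled by the null-sequence bound --- is the standard and correct way to verify this, and the final appeal to Moore's 1925 theorem closes the argument; the closing remark about simple connectedness is superfluous but harmless.
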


We note that the quotient map in Moore's theorem is a degree-$1$ map, as proven in cf. \cite[Theorems 13.4, 25.1]{daverman:decompositions}.

\medskip

We add some further properties that will be used in the proof of Theorem \ref{thm:blowup}.

\begin{proposition}\label{prop:Y}    Under the assumptions of Theorem \ref{thm:moore}, set $Y=S^2\setminus \cup_{K\in\CCC} K$.
For any connected open subset $U$ of $S^2$ such that $\partial U\subset Y$, the set $Y\cap U$ is arcwise connected. In particular
$Y$ 
 is arcwise connected, and every point of $Y$ admits a  basis of neighborhoods such that the boundaries of these neighborhoods are disjoint
from $S^2 \setminus Y = \cup_{K\in\CCC} K $.  \end{proposition}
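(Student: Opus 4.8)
The plan is to exploit Moore's theorem (Theorem \ref{thm:moore}) together with the standard dictionary between arcwise connectedness and the structure of continua. Let $\pi\colon S^2\to Z=S^2/\sim$ be the quotient map, which by Moore's theorem is a continuous surjection onto a topological sphere, with each nondegenerate fiber equal to some $K\in\CCC$ and each point of $Y$ a singleton fiber. The key observation is that $\pi$ restricts to a bijection $Y\to \pi(Y)$, and more importantly that $\pi$ maps $Y$ onto the complement in $Z$ of the (countable, by the null-sequence hypothesis) set $\pi(\cup_{K\in\CCC}K)$.

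First I would handle the ``in particular'' clause as the model case: take $U=S^2$. Here $\partial U=\emptyset\subset Y$ trivially. Given $x,y\in Y$, I want an arc in $Y$ from $x$ to $y$. Since $Z=S^2/\sim$ is a sphere, hence arcwise connected and locally arcwise connected, pick any arc $\alpha$ in $Z$ from $\pi(x)$ to $\pi(y)$. The problem is that $\pi^{-1}(\alpha)$ need not be an arc — it is $\alpha$ with the nondegenerate fibers over it ``blown back up''. The fix is to perturb $\alpha$ off the bad countable set $B=\pi(\cup K)$: since $Z$ is a $2$-manifold and $B$ is countable, a generic arc between two points of $Z\setminus B$ can be chosen to miss $B$ entirely (any two points in an open connected subset of a surface are joined by an arc, and one may push the arc off a countable set by a small homotopy, or invoke that $Z\setminus B$ is still arcwise connected because removing countably many points from a surface keeps it arcwise connected). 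Then $\pi$ restricted to $\pi^{-1}(Z\setminus B)=Y$ is a homeomorphism onto $Z\setminus B$ (it is a continuous bijection, and it is proper/closed since $\pi$ is a closed map and the fibers outside $B$ are points), so $\pi^{-1}(\alpha)$ is the desired arc in $Y$.

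Next I would do the general statement. Given a connected open $U\subset S^2$ with $\partial U\subset Y$, note that the hypothesis $\partial U\cap K=\emptyset$ for every $K\in\CCC$ forces each $K$ to lie entirely inside $U$ or entirely in its complement (as $K$ is connected and disjoint from $\partial U$). Hence $U$ is saturated with respect to $\sim$, so $\pi(U)$ is open and connected in $Z$ and $\pi^{-1}(\pi(U))=U$; moreover $\pi(U)$ is an open connected subset of the sphere $Z$, so it is a surface and arcwise connected. Now repeat the argument of the previous paragraph inside $\pi(U)$: two points of $Y\cap U=\pi^{-1}(\pi(U)\setminus B)$ are joined by an arc in $\pi(U)$, which can be pushed off the countable set $B\cap\pi(U)$ while staying inside the open connected surface $\pi(U)$, and then lifted via the homeomorphism $Y\to Z\setminus B$ to an arc in $Y\cap U$. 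Finally, for the neighborhood-basis claim: a point $y\in Y$ has $\pi(y)\in Z\setminus B$, and in the sphere $Z$ we may take a basis of open disks $V$ around $\pi(y)$ whose boundary circles $\partial V$ avoid the countable set $B$; pulling back, $U=\pi^{-1}(V)$ is open, contains $y$, and $\partial U=\pi^{-1}(\partial V)$ is disjoint from $\cup_{K\in\CCC}K$ (since $\partial V\cap B=\emptyset$ means no nondegenerate fiber meets $\partial U$), and these $U$ shrink down to $y$ because the $V$'s do and $\pi$ is a homeomorphism near $y$.

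The main obstacle is making rigorous the passage from ``an arc in $Z$'' to ``an arc in $Z$ avoiding the countable bad set $B$'': one needs that an open connected subset of a $2$-sphere remains arcwise connected after deleting a countable set, and that $\pi$ restricted to $Y$ is genuinely a homeomorphism onto its image (properness of $\pi$, which follows from compactness of $S^2$ and $\pi$ being a closed map with the relevant fibers being points). Both are standard, but they are the points that require care; everything else is bookkeeping about saturation of $U$ under $\sim$.
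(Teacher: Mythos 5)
Your proposal is correct and follows essentially the same route as the paper: both pass to the Moore quotient $\pi\co S^2\to Z$, observe that $\pi(Y)$ is the complement of a countable set in the sphere $Z$ (and that $U$ is saturated, so $\pi(U)$ is open and connected), join points by arcs in $Z$ avoiding that countable set, and lift them back using the injectivity of $\pi$ on $Y$, with the same pull-back construction for the neighborhood basis. The only cosmetic difference is that the paper lifts each compact arc individually (continuous bijection from a compact set) rather than asserting the global homeomorphism $Y\cong Z\setminus B$, but the two are equivalent here.
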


\begin{proof} Denote by $\pi:S^2\to Z$ the canonical projection and note that 
for all $y\in Y$, $\pi^{-1}(\pi(\{y\}))=\{y\}$, in particular the restriction of $\pi$ to $Y$ is injective. 
We first justify that if $A$ is a compact arc or a Jordan curve in $\pi(Y)$, then so is $B=\pi^{-1}(A)$. To see this, note that $B$ is compact and that
$\pi:B\to A$ is bijective and continuous since $B\subset Y$. 

Since the projection $\pi:S^2\to Z$ maps $Y$ to the complement of a countable set,
we may find arcs joining
any two points in $\pi(Y)$ and then lift them back to $Y$. This proves that $Y$ is arcwise connected,  as well as $U\cap Y$ for any connected open 
set with $\partial U\subset Y$, for in this case $U$ is saturated and $\pi(U)$ is open.
Similarly, if $x\in Y$, then we may construct a basis of disk-neighborhoods of $\pi(x)$ in $Z$ with their boundaries contained in $\pi(Y)$. 
They lift as disk neighborhoods of $x$ in $S^2$. Since $\CCC$ is a null sequence and $x$ is disjoint from the collection $\CCC$, these disk-neighborhoods  form a basis. \end{proof}

{\noindent\bf Scheme of the proof of Theorem \ref{thm:blowup}.---}
The statement only refers to the action of $G$ on $S^2$ and not to the group itself. Therefore,  to prove the theorem it is sufficient to assume that $G$ acts faithfully, which we shall do from now on.

Our proof has two key parts. In the first part we blow-up the sphere $S^2$ over each parabolic fixed point of the subgroups $P\in\PP_1$. More precisely, we blow-up the complement of horoballs for the original action. This blown-up space admits a natural quotient map $\pi$ onto the complement of the horoballs. We then show that the blown-up space is planar and lift the action of $G$ by $\pi$ and we finally extend both $\pi$ and the action to the whole sphere. This is accomplished in several steps.

\medskip

 In {\bf Step 1}, we define and show properties of horoballs in the original action on $S^2$. We associate $G$-equivariantly to each parabolic point coming from $P\in \PP_1$ a pair of 
disjoint $P$-invariant horoballs. The set of these forms a null-sequence. Our next step is to blow-up the complement $Y$
of all these horoballs. For this, in {\bf Step 2}, we define our candidate blown-up space $\hat{Y}$ that comes from
a different completion of the complement $Y$ of the horoballs.  It comes with a continuous projection
$\pi:\hat{Y}\to \overline{Y}$, where $\overline{Y}$ is the closure of $Y$ in $S^2$. The topology of $\hat{Y}$ requires understanding. For this, in {\bf Step 3}, we give a basis of  
neighborhoods for points in $\overline{Y}$ that are well-behaved under $\pi$. In {\bf Step 4} we establish that
$\widehat{Y}$ is a planar, locally connected and arcwise connected compact
set. Furthermore, $\hat{Y}$ admits an embedding in the sphere such that its complementary components are open disks (Proposition \ref{prop:tophatY}). Moreover, the map $\pi:\widehat{Y} \to \overline{Y}$ is onto and $1:1$ except over the parabolic points where it is $2:1$ (Lemma \ref{lma:pifibers}).  We then embed $\hat{Y}$ in $S^2$. {\bf Step 5} extends the action of $G$ equivariantly. 
We show that the action of $(G,\PP)$ on $\overline{Y}$ lifts to a geometrically finite action of $(G,\PP_2)$ on $\hat{Y}$ (Proposition \ref{prop:liftedactiop}), using properties of  
Step 3.
This action is in turn extended to $S^2$ with the same properties. Then we extend the projection $\pi$ to $S^2$ equivariantly (Proposition \ref{prop:piext}). Theorem  \ref{thm:blowup} follows immediately from these propositions and the fact that for a geometrically finite action on $S^2$, the Bowditch boundary is 
homeomorphic to the limit set of the action. The formal proof of Theorem \ref{thm:blowup} from Propositions \ref{prop:liftedactiop} and \ref{prop:piext} is given at the end of the section. 

\medskip

{\noindent\bf Step 1. Definition and properties of horoballs.---} 
Let $\mathbb{P}_1$ denote a set of representatives of each conjugacy class in $\PP_1$. 

\medskip

Fix $P\in\mathbb{P}_1$  with parabolic point $p$.  We will  
define two disjoint horoballs in $\Omega_G$ attached to $p$. By a horoball, we mean the intersection with $\Omega_G$ of a closed Jordan domain in $\Omega_G\cup\{p\}$ that is invariant under  $g$ (cf. Section \ref{sec:general}). 
These will be two closed topological disks in $S^2$ which meet at the point $p \in \Lambda_G$.  
Let $P^+$ be the subgroup of index at most $2$ of $P$ of orientation-preserving elements.
Since it is two-ended, $P$ contains a subgroup $P_e$ of index at most $2$ which fixes both ends of $P$. 

According to Proposition~\ref{prop:elementary} 
the normal subgroup $T=P^+\cap P_e$ is generated by an element $g$ that acts as a translation cf. Lemma \ref{lma:translation}.

Let us consider a chart that identifies $S^2\setminus\{p\}$ with $\mathbb{C}$ and $g$ with the translation by $1$. Note that the action of $g$ on $\Lambda_G\setminus\{p\}$ is cocompact since
$g$ generates a finite index subgroup of $P$ and $p$ is a bounded parabolic point. Therefore, we may enclose $\Lambda_G\setminus\{p\}$ into a horizontal open strip of bounded width. 
The complement of the strip in $\mathbb{C}$ is the union of two half-planes contained in $\Omega_G$, each of which defines a closed horoball attached to $p$. 
Let $H_P$ denote their union. Since $P$ is the stabilizer of $p$ and $T$ is of finite index in $P$,  the two half planes can be chosen so that the stabilizer of $H_P$ is exactly $P$.
Choose the horoballs small enough so that the collection is pairwise disjoint in $\Omega_G$. This is possible since the action of $G$ is properly discontinuous on $\Omega_G$ and $\mathbb{P}_1$ is finite.

\medskip

\begin{lemma}\label{lma:Cnull} Let $\CCC$ denote the collection of all translates by $G$ of this finite collection, i.e., $\CCC= \{g\overline{H_P}, g \in G, P \in \mathbb{P} \} $, and $\overline{H_P}=H_P\cup\{p\}$.  The collection $\CCC$ forms a null sequence that is locally finite in $\Omega_G$.\end{lemma}

\begin{proof} We proceed by contradiction: we consider a sequence $(g_n)_{n \in \mathbb{N}}$ of $G$ such that $\hbox{\rm diam}\,g_n(H_P)\ge \delta$ for some $\delta>0$ and some fixed $P\in\mathbb{P}_1$ and associated point $p$. 
Up to taking a subsequence, by the convergence property, we may assume that $(g_n)$ tends uniformly towards 
the constant map with image $b\in\Lambda_G$
on the compact subsets of $S^2\setminus \{b'\}$, 
where $b,b'\in\Lambda_G$. 
We now remark that we must have $b'=p$. Indeed, if that was not the case, the closure of $H_P$ would be a compact set in the complement of $b'$, intersecting $\Lambda_G$ only in $p$. By the convergence property its images by the elements of the sequence should shrink to $\{b\}$, against the hypothesis that their diameter is bounded from below. Thus $\CCC$ is a null sequence.

Since additionally each element in $\CCC$ intersects $\Lambda_G$, it follows that the collection is locally finite in $\Omega_G$.
\end{proof}

{\noindent\bf Step 2. Definition of $\widehat{Y}$.---} Set $Y=S^2\setminus \cup_{K\in\CCC} K$, and  observe that that we are under the assumptions of Proposition \ref{prop:Y} by Lemma \ref{lma:Cnull}.  In particular, $Y$ is arcwise connected. 

 Endow $S^2$ with a distance $d_S$ compatible with its topology.
  We define, on the subset  $Y$ of $S^2$,  $d_Y(x,y)=\inf \hbox{\rm diam}_S L$ where
$L$ runs over all continua of $Y$ which contain $\{x,y\}$. This defines a metric on $Y$. Let us denote
by $\hat{Y}$ its metric completion. 

\medskip 

Let us observe that $d_S\le d_Y$ so that every Cauchy sequence for $d_Y$ is a Cauchy sequence for $d_S$. Therefore, 
this ensures the existence of a canonical continuous map $\pi:(\hat{Y},d_Y)\to (\overline{Y},d_S)$. 

\medskip

{\noindent\bf Step 3. Regular neighborhoods in $\overline{Y}$.---}
We define a notion of  regular neighborhoods for points in $\overline{Y} \subset S^2$. It will help us
understand the topology of $\hat{Y}$ from that of $\overline{Y}$ in $S^2$ under the projection $\pi$. 
\begin{itemize}
\item  By Proposition \ref{prop:Y},  every point in $y\in Y$ admits a basis of neighborhoods in $S^2$ 
whose boundaries are disjoint from the elements of $\CCC$. We call such neighborhoods {\it regular  of type $(Y)$} for $y$. 
\end{itemize}
Now let $K\in\CCC$ be the element associated to a rank 1 parabolic point $p$. Note that  $K$ is a union of two closed  disks attached at $p$
and that $K\setminus\{p\}$ is contained in $\Omega_G$, so isolated from the other components, cf. Lemma \ref{lma:Cnull}. 

\begin{itemize}

\item Let $x\in \partial K\setminus\{p\}$ and observe that $x\in \Omega_G$. It follows from the fact that $\CCC$ is a locally finite family in $\Omega_G$ that $x$ admits a basis of neighborhoods which are disks in $\overline{Y}$ bounded by the union of an arc in $\partial K$ and an arc in $\Omega_G\setminus K$.  We call such neighborhoods {\it regular of type $(K)$}  for $x$.

\item  For the point $p$,  we may consider a basis of Jordan disks whose boundaries  intersect $K$ in exactly two arcs, one in each horoball.
 We choose them to also be disjoint from elements of $\CCC\setminus\{K\}$ with the help of Proposition \ref{prop:Y} applied to $\CCC\setminus \{K\}$.
We call the intersection of such disks with $\overline{Y}$  {\it regular  of type $(P)$} for $p$.  \end{itemize}

By construction, if $V$ is a regular neighborhood of type $(Y)$ or $(K)$, then $Y\cap V$ is arcwise connected, whereas if $V$ is of type $(P)$, then $Y\cap V$  has exactly two arcwise
connected components of arbitrarily small diameter.

\medskip

{\noindent\bf Step 4. Properties of the set $\hat{Y}$.---} 
We now establish some properties of the set $\hat{Y}$. In particular we will see that $\hat{Y}$ embeds in $S^2$.

\begin{proposition}\label{prop:tophatY} The set $\widehat{Y}$ is a planar, locally connected and arcwise connected compact
set that admits an embedding in the sphere such that its complementary components are open disks.
\end{proposition}

We start with the following lemma.

\begin{lemma}\label{lma:pifibers} The map $\pi:\widehat{Y} \to \overline{Y}$ is onto and $1:1$ except over the parabolic points where it is $2:1$.
\end{lemma}

\begin{proof}
Let $(x_n)_n$ be a Cauchy sequence in $(Y,d_S)$ with limit $x\in\overline{Y}$. If $x$ is not a rank-one parabolic point, then it admits a basis of regular neighborhoods in $\overline{Y}$
of types $(Y)$ or $(K)$ that intersect $Y$ in an arcwise connected set, so that $(x_n)_n$ is also a Cauchy sequence in $(Y,d_Y)$  that defines a unique 
limit point in $\widehat{Y}$.
If $x$ is a rank-one parabolic point with stabilizer $P$, then $\overline{Y}\setminus\{x\}$ 
has two ends associated to the arcwise connected components of its regular neighborhoods of type $(P)$.
It follows that $\pi^{-1}(\{x\})$ has exactly two preimages,  one for each end. Thus, $\pi$ is also surjective and a point has two preimages if it is a parabolic point of rank one and one preimage otherwise. 
\end{proof}

Corollary \ref{cor:tophatY} will follow  from Lemma \ref{lma:pifibers} with the description of the regular neighborhoods in Step 3. 

\begin{corollary} \label{cor:tophatY}
The set $\hat{Y}$ is arcwise connected, locally connected, compact, 
with no local cut points 
and such that each component of $\hat{Y}\setminus Y$ is
 a non-separating simple closed curve  in $\hat{Y}$. 
\end{corollary}
\begin{proof} 
The metric space $\hat{Y}$ is connected as the completion of the connected space $Y$ by Proposition \ref{prop:Y}. 
Let us prove that $Y$ is compact. Let $(y_n)_n$ be a sequence in $\hat{Y}$. Since $\overline{Y}$ is compact, we may assume that 
$(\pi(y_n))_n$ is convergent to a point $x$. 
If $x$ is not a rank-one parabolic point, then $(y_n)_n$ tends to $\pi^{-1}(\{x\})$. If $x$
is a rank-one parabolic point, then we may consider a subsequence that remains in a single end of $\overline{Y}\setminus\{x\}$. This subsequence will provide a convergent subsequence in $\hat{Y}$. Compactness of $\hat{Y}$ follows.

We now prove local connectedness and absence of local cut points. Let $y\in \hat{Y}$ and  $x=\pi(y)\in \overline{Y}$. If $x$ is not a rank-one parabolic point, then, for any regular neighborhood $V$ of $x$, Lemma \ref{lma:pifibers} and Proposition \ref{prop:Y} imply that $\pi^{-1}(V)$ is a connected neighborhood of $y$ so that $\hat{Y}$ is locally connected at $y$ and regular neighborhoods of $x$ define a basis of connected neighborhoods of $y$. Moreover, $V\setminus\{x\}$ remains connected so that $y$ is not a local cut point.

If $x$ is a rank-one parabolic point, then its regular neighborhoods $V$ punctured at $x$ have two connected components $V_1\sqcup V_2\,(=V\setminus\{x\})$. 
However the point $y$ is defined by a class of $d_Y$-Cauchy sequences that are contained in a single component $V_j$. 
Thus, $\pi^{-1}(V_j)\cup\{y\}$ defines a connected neighborhood of $y$. 
Therefore, $\hat{Y}$ is also locally connected at $y$  and the connected components of punctured regular neighborhoods of $x$ contained in $V_j$
define a basis of connected neighborhoods for $y$. As $V_j$ is connected, it follows that $y$ is not a local cut point either.

Since $\hat{Y}$ is a connected locally connected compact metric space, it follows that $\hat{Y}$ is arcwise connected.

Let us now consider $y\in\hat{Y}\setminus Y$. Then there exists $K\in\CCC$ such that $\pi(y)\in \partial K$. Let $p\in K$
be its parabolic point. The set $\partial K\setminus \{p\}$ is a union of two arcs. By Lemma \ref{lma:pifibers}, they lift
as two arcs in $\hat{Y}$ that accumulate on both preimages of $p$. Thus $\pi^{-1}(\partial K)$ is a simple closed curve.

With the same notation, note that, by Proposition \ref{prop:Y}, $Y$ is a dense connected subset of $\hat{Y}\setminus \pi^{-1}(\partial K)$, so that the latter is connected as well. This implies that the components of $\hat{Y}\setminus Y$ are non-separating simple closed curves.
\end{proof} 

\begin{proof}[Proof of Proposition \ref{prop:tophatY}] If we prove that $\hat{Y}$ is planar then it will follow from the previous corollary that the complementary components are disks since the boundary curves are non-separating. Hence, 
it only remains to check that $\hat{Y}$ is planar. 
Claytor's theorem \cite{claytor} asserts that a continuum without local cut points is embeddable in the sphere  if and only if it contains neither a copy of the complete graph on five vertices $K_5$ nor of the complete bipartite graph  with six vertices $K_{3,3}$. 

Let us consider a finite connected 
graph $L$ and an embedding $j:L\hookrightarrow \hat{Y}$. We will modify the embedding $j$ so that $\pi\circ j$ is also injective, implying that $L$ cannot be one of the forbidden graphs.

Let $T\subset L$ denote the closure of the set of points $z\in L$ for which we may find $w\ne z$ in $L$ such that $(\pi\circ j) (z)= (\pi\circ j) (w)$. Note that $T$ is a compact subset of $L$.
If $T$ is empty, then there is nothing to do.
Let us assume it is not empty.
 Let $z\in T$. If $z$ belongs to an edge, we consider an open interval neighborhood $J_z\subset L$ contained in the same
edge; if $z$ is a vertex, then we consider a star-shaped open neighborhood $J_z$ contained in the union of the edges incident to $z$. Since $L\setminus J_z$
is compact, we have $d_Y(j(z), j(L\setminus J_z))>0$ so that we may find a regular neighborhood $\hat{V}_z\subset \hat{Y}$ of $j(z)$ such that $\overline{j^{-1}(\hat{V}_z) }\subset J_z$; we let
$V_z\subset \overline{Y}$ be the corresponding regular neighborhood of $(\pi\circ j)(z)$. 

We now extract a finite subcover of $(\pi\circ j)(T)$ given by the above regular neighborhoods that we order $V_1,\ldots, V_n$. Each $V_k$ comes with a point $z_k\in L$, a neighborhood $J_k\subset L$ and a regular
neighborhood $\hat{V}_k$ of $j(z_k)$. We modify the embedding $j$ inductively on the neighborhoods $V_k$. Let us fix $1\le k\le n$, and let us assume that $\pi\circ j$ is injective  on $L\setminus ( \cup_{k\le i\le n} J_{z_i} )$.
We note that $j(L)\cap \overline{\hat{V}_k}\subset j(J_k)$ and $\hat{V_k}\cap Y$ is homeomorphic to the complement of a countable subset of a Jordan domain. Therefore, we may modify $j|_{J_k}$ 
so that its image in $\hat{V}_k$ is contained in $Y$. As $\pi|_Y$ is injective, the map  $\pi\circ j$ is now injective  on $L\setminus ( \cup_{k< i\le n} J_{z_i} )$. 

In conclusion, given any embedding of a finite graph $L$ in $\hat{Y}$, there is an embedding of $L$ in $\overline{Y}$, hence in $S^2$. As the latter space is planar, we may conclude that $L$ is not isomorphic to
$K_5$ nor $K_{3,3}$, and so $\hat{Y}$ is planar. 
\end{proof}

\medskip

{\noindent\bf Step 5. Extension of the action of $G$.---} Let $\hat{y}\in \hat{Y}$, $y=\pi(\hat{y})$ and $g\in G$. The point $y$ is defined by a class of $d_Y$-Cauchy sequences in $(Y,d_Y)$ that enter an arcwise component of any regular neighborhood of $y$. These can be taken to be contained in a single component of a regular neighborhood of $y$ with $y$ removed, as in the proof of Corollary \ref{cor:tophatY} above.   Their images 
under $g$ also enter an arcwise component of regular neighborhoods of $g(y)$. Therefore, they also form equivalent $d_Y$-Cauchy sequences, and this defines $g(\hat{y})$
by continuity.
Therefore,
 the $G$-action on $(Y,d_Y)$ extends continuously to $\hat{Y}$.

\begin{proposition}\label{prop:liftedactiop} The group $G$ acts on $\hat{Y}$ as a geometrically finite convergence group with limit set $\pi^{-1}(\Lambda_G)$ and maximal parabolic subgroups $\PP_2$. 
\end{proposition}

\begin{proof}
Let us first check that we obtain a convergence action. We pick a sequence of distinct elements $(g_n)$. 
We may as well assume that there are two points
$a$ and $b$  in $\overline{Y}$ such that the sequence $(g_n)$ of homeomorphisms of the sphere tends uniformly to the constant map $a$ on the compact subsets of $S^2\setminus\{b\}$. 
When both $a$ and $b$ are distinct from the rank
1 parabolic points, then this property lifts to $\hat{Y}$. 

Let us assume that $a$ is a rank 1 parabolic point and write $\{x,x'\} =\pi^{-1}(a)$. Let us consider a regular neighborhood of type $(P)$. 
It defines two disjoint connected neighborhoods $W$ and $W'$ in $\hat{Y}$ of $x$ and $x'$ respectively. We may pick a point $z\in \hat{Y}$ and assume 
that $(g_n(z))$ tends to $x$ for instance, implying that $g_n(z)\in W$ for all $n$ large enough. 
Note that we may exhaust $\hat{Y}\setminus \pi^{-1}(\{b\})$ by connected compact subsets. Since $\pi^{-1}(a)$ is discrete, for any connected
compact subset $K\subset \overline{Y}\setminus \{b\}$ containing $\pi(z)$, the convergence property
implies that $g_n(K)$ has to 
be contained in $\pi(W)$ for $n$ large enough. This implies that $(g_n)$  tends to the constant $x$ in $\hat{Y}\setminus \pi^{-1}(b)$. If $b$ is not parabolic, then we are done. 

On the other hand, if $\pi^{-1}(b)=\{y,y'\}$, then the same reasoning for $(g_n^{-1})_n$ shows
that we may also assume that all compact subsets disjoint from $\{x,x'\}$ tend to $y$ under $(g_n^{-1})$. 
Let $V\subset\hat{Y}$ be a disk-neighborhood of $y$ disjoint from $W\cup W'$ 
and $K=\hat{Y}\setminus (W\cup W')$. Note that, for any $n$ large enough, $g_n^{-1}(K)$ is contained in $V$ so that the connected set $\hat{Y}\setminus V$ 
is covered by the two disjoint open sets $g_n^{-1}(W)$ and  $g_n^{-1}(W')$. The connectedness of $\hat{Y}\setminus V$ implies that
$g_n^{-1}(W')\subset V$ since $(g_n)$ pushes points into $W\subset (\hat{Y}\setminus V)$. Therefore, we have uniform convergence of $(g_n^{-1})$ on $W'$ to the constant map $y$. By symmetry, we get uniform convergence of $(g_n)$ to the constant map $x$ on compact subsets
disjoint from $y$. This shows that $G$ has also a convergence action on $\hat{Y}$.

Let us note that since the action of $G$ on  $\Lambda_G\cap Y$ is invariant and minimal, its closure $\hat{\Lambda}$ in $\hat{Y}$ will be a minimal invariant subset, hence
the limit set of this new action. 

We may check that the action on it is geometrically finite with maximal parabolic subgroups in $\PP_2$. 
\end{proof}

Since $\hat{Y}$ is planar, we may consider it as a subset of $S^2$ and  extend the action to the
whole sphere using \cite[Thm. 5.8]{ph:unifplanar}. There is only one such extention up to homeomorphism of the sphere since $\hat Y$ has no local cut points, see Corollary \ref{cor:tophatY}.  Since $\pi^{-1}(\Lambda_G)$ is a minimal invariant compact subset of the sphere, it remains the limit of this
extended action.

We will use the action of $G$ on $S^2$ to define the extension of $\pi$ to $S^2$. This will provide us with the cover of the action of $(G,\PP_2)$ over
$(G,\PP)$ in the sense of Martin and Skora.

\medskip

To conclude the proof of Theorem \ref{thm:blowup}, we establish the next proposition.

\begin{proposition}\label{prop:piext}
 
The projection $\pi:\hat{Y}\to \overline{Y}$ extends continuously to a degree-$1$ surjective $G$-equivariant map from the $2$-sphere to itself.    
\end{proposition}

\begin{proof} Let us consider a rank-one parabolic point $p$ in $S^2$ with parabolic subgroup $P\in\mathbb{P}_1$.

Let $H_P$ be the union of the two disjoint horoballs $H^+$ and $H^-$ attached to the parabolic point $p$ of $P$, as defined in Step 1.
There is an infinite order element $g\in G$ such that $T=\langle g\rangle$ has index at most $4$ in $P$. 

\medskip

Let us now consider the boundary Jordan curve $\gamma=\pi^{-1}(\partial H_P)\subset \hat{Y}$. It follows from Proposition~\ref{prop:tophatY} that $\gamma$ bounds a disk $D$ in the complement of $\hat{Y}$. The group $P$ acts on $\overline{D}$ as an elementary Fuchsian group with limit set $\{x,x'\}=\pi^{-1}(\{p\})$. The orbit space $(\overline{D}\setminus\{x,x'\})/P$ is a finite quotient of the compact annulus $(\overline{D}\setminus\{x,x'\})/T$. It follows that $P$ is a frieze group so that one may find a $P$-invariant line $\alpha$ in $D\cup\{x,x'\}$, joining $x$ and $x'$.

The curve $\alpha$ splits $\overline{D}$ into two closed half-disks $D^+$ and $D^{-}$ that we label so that $\pi(D^+\cap \partial D)= \partial H^+$ and $\pi(D^-\cap \partial D)= \partial H^-$.

The map $\pi$ defines a homeomorphism between the boundary of the quotient $\overline{D}\setminus(\alpha \cup \{x,x'\})/P$ and that of $H_P/P$. We now see that $\pi$ admits an equivariant surjective extension from $\overline{D}$ to $H_P\cup\{p\}$ that maps $\alpha$ to $p$ and
restricts to a homeomorphisms on the complement of $\alpha$. 

\medskip

We proceed in the same way for every parabolic subgroup in $\mathbb{P}_1$ and then extend the result $G$-equivariantly. As $\CCC$ is a null-sequence, one obtains in this way a continuous equivariant surjective extension $\pi:(S^2,\hat{Y})\to (S^2,\overline{Y})$. As the fibers satisfy the assumptions of Moore's theorem, it follows that $\pi$ is a degree-one map.
\end{proof}

\begin{proof}[Proof of Theorem \ref{thm:blowup}.] By Proposition \ref{prop:liftedactiop}, $G$ acts on $\hat{Y} \subset S^2$ as a geometrically finite convergence group, with limit set $\pi^{-1}(\Lambda_G)$, the pre-image of the limit set of the original action restricted to $Y$. In Proposition \ref{prop:piext} this action is extended to $S^2$ in a way that covers the original action of $G$ on $S^2$.  Since the action is properly discontinuous on the extended part, the action remains geometrically finite and and limit set remains the same. Proposition \ref{prop:piext} shows that the geometrically finite action of $(G, \mathcal{P})$ is covered by the action of $(G, \mathcal{P}_2)$ that we have constructed.   
\end{proof}
 
\section{Topological Schottky sets} 
\label{sec:topschott} 

\begin{definition} \label{def:schottky} 
  A {\it topological Schottky set} $\mathcal{S}$ is a proper compact subspace of $S^2$ defined by the following topological properties enjoyed by Schottky sets. 
\begin{itemize}
\item[(S1)] the set of components of $S^2\setminus \mathcal{S}$ is a countable non-empty collection of disks $\{D_i\}_{i\in I}$; 
\item[(S2)] for each $i$, $\bar D_i=D_i\cup\partial D_i$ is a closed disc; that is $D_i$ is a Jordan domain. 
\item[(S3)] for each pair $i\neq j\in I$ $\bar D_i$ and $\bar D_j$ meet in at most 
one point.
\item[(S4)] for each triple of distinct indices $i,j,k\in I$, $\bar D_i\cap \bar D_j \cap \bar D_k=\emptyset$, 
\item[(S5)] for every open cover ${\mathcal{U}}$ of $S^2$ and for all but finitely many $i\in I$ there is a $U_i\in{\mathcal{U}}$ such that $D_i\subset U_i$.
\end{itemize} 
\end{definition}

\begin{remark}
If the 2-sphere is endowed with a metric, the purely topological condition (S5) is equivalent to asking that 
$\mathcal{S}$ is an $E$-set (Def. \ref{def:nses}). This is an easy consequence of the Lebesgue number lemma. 
\end{remark}

The most well-known topological Schottky sets are the Sierpi\'nski carpet and the Apollonian Gasket.  These both occur as the limit sets of geometrically finite Kleinian groups, \cite{KK}, \cite{HPW}. Hence they are also the (Bowditch) boundaries of relatively hyperbolic groups. 
Observe that in contrast to the definition of a Schottky set, the cardinality of $I$ is not required to be at least $3$. However, if $|I|\le2$ then $\mathcal{S}$ {either violates (S3) or has non-empty interior.  Boundaries of relatively hyperbolic groups which are Schottky sets have empty interior by \ref{prop:ordcomp} }

\begin{proposition}\label{prop:schotoprop}
A topological Schottky set is connected, locally connected, hence arcwise connected, with no cut points and no cut pairs.

\end{proposition}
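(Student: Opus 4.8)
The plan is to establish each property of a topological Schottky set $\mathcal{S}$ in turn, using the defining axioms (S1)--(S5) together with standard plane topology. The key structural fact to exploit is that (S5) makes $\mathcal{S}$ behave like an $E$-set: the Jordan domains $D_i$ form a null sequence.

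First I would prove connectedness. Suppose $\mathcal{S} = A \sqcup B$ with $A, B$ nonempty, closed, disjoint. Since $S^2$ is connected, $S^2 \setminus \mathcal{S}$ is nonempty, and because $A$ and $B$ are compact and disjoint we can separate them by a Jordan curve $\gamma$ in the open set $S^2 \setminus \mathcal{S}$ (using that $S^2$ minus a point is $\R^2$ and applying a separation argument). But $\gamma$ lies in $\bigcup_i D_i$; since $\gamma$ is connected and the $D_i$ are the connected components of $S^2\setminus\mathcal{S}$, the curve $\gamma$ lies in a single $D_i$. Then $\gamma$ is null-homotopic in the disk $D_i$, hence bounds a disk $\Delta \subset D_i \subset S^2\setminus\mathcal{S}$ on one side; so $\gamma$ cannot separate the two nonempty subsets $A,B$ of $\mathcal{S}$, a contradiction. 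Hence $\mathcal{S}$ is connected.

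Next, local connectedness: I would show the components of $S^2 \setminus \mathcal{S}$ form a null sequence (immediate from (S5) via the Lebesgue number lemma, as in the Remark) and each has Jordan-curve boundary (S2). By a standard criterion --- e.g. \cite[Theorem VI.4.4]{whyburnantop}, the same one invoked in the proof of Lemma~\ref{lma:omegafuchs} --- a compact set in $S^2$ whose complementary components form a null sequence with locally connected boundaries is itself locally connected. Local connectedness plus compactness plus connectedness then gives arcwise connectedness by the Hahn--Mazurkiewicz / \cite[II.5.1]{whyburnantop} argument already used in the proof of Lemma~\ref{cor:nodendrite}.

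Finally, no cut points and no cut pairs. Suppose $x \in \mathcal{S}$ (or a pair $\{x,y\}$) disconnects $\mathcal{S}$, writing $\mathcal{S} \setminus \{x\} = A \sqcup B$ (resp.\ $\mathcal{S}\setminus\{x,y\}$) as a separation into relatively open pieces. I would again produce a Jordan curve in $S^2\setminus(\mathcal{S}\setminus\{x\})$ separating $\overline{A}$ from $\overline{B}$ in $S^2$; such a curve meets $\mathcal{S}$ only in $\{x\}$ (resp.\ $\{x,y\}$). The curve, minus the point(s) of $\mathcal{S}$ it contains, lies in $\bigcup_i D_i$ and consists of one arc (resp.\ at most two arcs). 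An arc of the curve is connected and avoids $\mathcal{S}$ except possibly at its endpoints, so lies in the closure of a single $D_i$, meeting $\partial D_i$ only in the shared point(s). Since $\overline{D_i}\cap \overline{D_j}$ is at most one point (S3) and no three closures meet (S4), the configuration forces at least one arc to be a crosscut of a single $D_i$ with both endpoints equal, i.e.\ a loop in $\overline{D_i}$ based at a single point of $\partial D_i$; pushing this arc slightly into $D_i$ shows it bounds a disk inside $\overline{D_i}$, which cannot separate any two points of $\mathcal{S}$ lying outside $\overline{D_i}$. In the cut-pair case one also uses that $x,y$ cannot both lie on the same $\partial D_i$ in a way that disconnects, again by (S3)--(S4) limiting incidences to single points. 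This yields the contradiction. I expect the cut-pair case to be the main obstacle: one must carefully enumerate how the (at most two) arcs of the separating Jordan curve can be distributed among the $\overline{D_i}$'s and rule out each configuration using the one-point-incidence axioms, whereas connectedness and local connectedness follow quickly from the $E$-set structure.
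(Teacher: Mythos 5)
Your arguments for connectedness and for local (hence arcwise) connectedness are essentially sound: separating two putative clopen pieces of $\mathcal{S}$ by a polygonal Jordan curve in the open complement, noting that the curve lies in a single component $D_i$ and therefore bounds a disk inside $D_i$, is a correct route to connectedness (and a different one from the paper's, which instead proves the coloring Lemma~\ref{lma:semilocalcnd}: a two-coloring of $\mathcal{S}\cap\Omega$ extends to a two-coloring of $\Omega$ by absorbing each $D_i$ into the color of $\partial D_i\cap\Omega$, which is assumed connected). Your appeal to \cite[VI.4.4]{whyburnantop} and \cite[II.5.1]{whyburnantop} for local and arcwise connectedness is exactly what the paper does.

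The gap is in the cut-point and cut-pair cases. You need a Jordan curve that separates $A$ from $B$ while meeting $\mathcal{S}$ exactly in $\{x\}$ (resp.\ in $\{x,y\}$). Unlike the curve used for connectedness, this one cannot be produced by the standard grid/brick separation argument, because $\overline{A}\cup\overline{B}=\mathcal{S}$ and $x\in\overline{A}\cap\overline{B}$: the curve is forced to pass through the cut point itself. The existence of such a curve is not an off-the-shelf fact; it amounts to knowing that $x$ is accessible from $S^2\setminus\mathcal{S}$ from ``both sides'' of the separation, and a priori $x$ need not be accessible from $\bigcup_i D_i$ at all (recall that a topological Schottky set may have nonempty interior, and nothing in (S1)--(S5) says that $x$ lies on some $\partial D_i$). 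Proving this accessibility statement is essentially as hard as the proposition itself, and it is precisely the difficulty that Lemma~\ref{lma:semilocalcnd} is designed to bypass by never leaving $\mathcal{S}$: one applies the coloring argument with $\Omega=S^2\setminus\{x\}$, $\Omega=S^2\setminus\{x,y\}$, or $\Omega=S^2\setminus\gamma$ for a crosscut $\gamma$ of $\overline{D_i}$ joining $x$ to $y$ when both lie on a single $C_i$. A secondary remark: even granting the curve, your case analysis for cut pairs is more tangled than needed. The two open arcs of the curve either lie in the same $D_i$ (then one complementary side of the curve is contained in $D_i\cup\{x,y\}$, so the curve separates nothing in $\mathcal{S}\setminus\{x,y\}$) or in distinct $D_i\neq D_j$, forcing $\{x,y\}\subset\overline{D_i}\cap\overline{D_j}$ and contradicting (S3); no ``degenerate loop'' configuration arises. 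But tightening that analysis does not repair the main gap.
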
 

\begin{lemma}\label{lma:semilocalcnd}
Let $\mathcal{S}$ be a topological Schottky set and $\Omega$ a non-empty open connected subset of $S^2$ such that, for each $i\in I$, $\partial D_i\cap \Omega$ is empty or connected.
The set $X = \mathcal{S}\cap \Omega$ is connected.
\end{lemma}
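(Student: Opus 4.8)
\textbf{Proof strategy for Lemma~\ref{lma:semilocalcnd}.}

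The plan is to argue by contradiction. Suppose $X = \mathcal{S}\cap\Omega$ is disconnected; write $X = A\sqcup B$ with $A,B$ non-empty, relatively closed (equivalently relatively open) in $X$, and disjoint. Since $\mathcal{S}$ is compact and $\Omega$ is open, $A$ and $B$ are both locally compact; I would first try to find disjoint open subsets $V_A,V_B$ of $S^2$, contained in $\Omega$, with $A\subset V_A$ and $B\subset V_B$. This separation inside $\Omega$ is possible because $\Omega$ is a surface (hence metrizable and normal) and $A,B$ are closed in $\Omega$; a small technical point is that I only get this cleanly when $A,B$ have disjoint closures \emph{inside} $\Omega$, which holds since they are closed in $\Omega$ and disjoint. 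The set $W = \Omega\setminus(V_A\cup V_B)$ is then an open subset of $\Omega$ disjoint from $\mathcal{S}$, so $W\subset\bigcup_{i\in I}D_i$.

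Next I would exploit connectedness of $\Omega$: since $\Omega$ is connected and $\Omega = V_A\cup V_B\cup W$ with $V_A,V_B$ disjoint open sets both meeting $\Omega$, the ``interface'' between them cannot be empty, and every point of that interface lies in $W$, hence in some disk $D_i$. More precisely, pick a path $\gamma$ in $\Omega$ from a point of $A$ to a point of $B$ (using that $\Omega$ is path-connected, being an open connected subset of a surface). Along $\gamma$, consider the last parameter $t_0$ with $\gamma(t_0)\in\overline{V_A}$ and, just after it, the first return to $\overline{V_B}$; the intervening open sub-arc lies in $W$, so in the disks. The key geometric input is that $W$, being squeezed between $V_A$ and $V_B$, must ``cross'' $\mathcal{S}$: the portion of $\gamma$ lying in $W$ starts and ends on $\partial V_A\cup\partial V_B\subset\Omega$, but its endpoints-limits lie in $\overline{A}$ and $\overline{B}$ respectively, which are in $\mathcal{S}$. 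So some disk $D_i$ used by $\gamma$ must have $\partial D_i$ meeting both the ``$A$-side'' and the ``$B$-side'' of $\Omega\cap\mathcal{S}$.

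The crux is then a local argument at that single disk $D_i$: the boundary circle $\partial D_i$ has $\partial D_i\cap\Omega$ connected by hypothesis, it is a subarc (or all) of the Jordan curve $\partial D_i$, and it lies entirely in $\mathcal{S}$ (indeed in $X$, since it is in $\Omega$). Therefore $\partial D_i\cap\Omega$ is a connected subset of $X$, so it lies entirely in $A$ or entirely in $B$ — contradicting that the crossing arc of $\gamma$ through $D_i$ has its two endpoints accumulating on $A$ on one side and $B$ on the other (those accumulation points lie on $\partial D_i\cap\Omega$). To make this last step rigorous I would set it up as: reduce $\gamma$ to a finite sequence of ``excursions'' into disks $D_{i_1},\dots,D_{i_m}$ separated by arcs in $\mathcal{S}\cap\Omega$; each such separating arc lies in a single connected piece $A$ or $B$; and consecutive pieces sharing the boundary of a common disk $D_{i_k}$ must agree because $\partial D_{i_k}\cap\Omega$ is connected and contained in $X$. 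Hence all the $\mathcal{S}$-pieces of $\gamma$ lie in the same side, forcing the endpoints of $\gamma$ into the same side — a contradiction.

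\textbf{Main obstacle.} The delicate part is not the final local argument but the bookkeeping in the middle: turning the path $\gamma$ into a well-behaved finite concatenation of sub-arcs, each either inside a single complementary disk or inside $\mathcal{S}\cap\Omega$, and arguing that only finitely many disks are involved. Property (S5) (equivalently, $\mathcal{S}$ being an $E$-set) is what controls this: only finitely many $D_i$ have diameter bounded below, and $\gamma([0,1])$ is compact, so one can restrict attention to finitely many ``large'' disks and absorb the small ones. I would use a Lebesgue-number argument on the compact path to make the reduction clean, and then the connectedness of each $\partial D_i\cap\Omega$ closes the loop.
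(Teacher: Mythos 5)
Your overall strategy is sound and genuinely different from the paper's. The paper does not use a path at all: it takes the two open sets $R,B$ witnessing a putative disconnection of $X$, observes that for each $i$ the connected set $\partial D_i\cap\Omega\subset X$ lies entirely in one of them, enlarges $R$ and $B$ by adjoining the corresponding sets $D_i\cap\Omega$, and thereby obtains a disconnection of the connected set $\Omega$ itself -- a contradiction. (The paper also remarks that (S5) is not used in this lemma, which is already a hint that your appeal to it signals a detour.) Your path-based dual of this argument can be made to work, and the key local step -- each excursion of $\gamma$ into a disk $D_i$ enters and exits through $\partial D_i\cap\Omega$, which is connected, contained in $X$, and hence monochromatic -- is exactly right.

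However, the step you yourself flag as the ``main obstacle'' is a genuine gap as proposed, and the proposed fix does not close it. There is in general no decomposition of $\gamma$ into a \emph{finite} concatenation of sub-arcs each lying in a single disk or in $\mathcal{S}\cap\Omega$: the set $T=\gamma^{-1}(\mathcal{S}\cap\Omega)$ is merely a closed subset of $[0,1]$, its complement is a countable (possibly infinite) union of open intervals, and $T$ itself can be totally disconnected, so the ``separating arcs in $\mathcal{S}\cap\Omega$'' need not exist and there is no finite chain of consecutive pieces to induct along. Property (S5) bounds the number of \emph{large} disks but not the number of excursions, and ``absorbing the small ones'' has no content -- a small disk still contributes a piece of $\partial D_i\cap\Omega$ that must be assigned a colour and cannot be skipped. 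The repair is to drop finiteness entirely: colour each $t\in T$ by the piece ($A$ or $B$) containing $\gamma(t)$, extend the colouring to each complementary interval by the common colour of its two endpoints (common precisely because both lie on the connected set $\partial D_i\cap\Omega\subset X$), and verify that the resulting map $[0,1]\to\{A,B\}$ is continuous, hence constant. Two smaller points: $W=\Omega\setminus(V_A\cup V_B)$ is closed in $\Omega$, not open; and the whole $V_A,V_B,W$ construction plays no role in your final argument and can be deleted.
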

Note that the proof of this lemma does not use the $E$-set condition (S5). Moreover the conclusion of the lemma holds for any closed subset $S$ of the 2-sphere such that the intersection of the boundary of each complementary component with $\Omega$ is empty or connected. 
\begin{proof} 
 Let us consider two open subsets of $\Omega$, $R$ and $B$, for red and blue, such that $X\subset R\cup B$, $X\cap R$ 
 and $X\cap B$ are not empty, but $X\cap R \cap B=\emptyset$.
 We may assume that each component of $R$ and $B$ intersects $X$, by removing any components that do not intersect $X$ (note that the sphere is locally connected so every component of an open subset is itself open). 

We will increase these sets (by adding in disks associated to the two components) into two open and disjoint subsets that cover $\Omega$: this will prove that one of them has to be empty,
hence that $X$ is connected. With this in view, we split the set of components $\{D_i\}_{i\in I}$ into three sets  $I=I_0\sqcup I_R \sqcup I_B$.

Let $i\in I$, and let us write  $C_i= \partial D_i$. If $C_i\cap X=\emptyset$, since $\Omega$ is a connected set intersecting $\mathcal{S}$ and $C_i$ is a Jordan curve, then $\Omega\cap D_i=\emptyset$ and we let $i$ belong to $I_0$.   If not, $(C_i\cap X)$ is connected
by assumption, and covered by $R$ and $B$. Hence, $R\cap (C_i\cap X)=\emptyset$ or $B\cap (C_i\cap X)=\emptyset$. In the former
case, $D_i\cap R=\emptyset$ as each component of $R$ intersects $X$, but not $C_i$, so we let $i$ belong to $I_B$; in the latter, we let $i$ belong to $I_R$. Thus $i\in I_R$ if and only if $(C_i\cap X)\subset R$
and  $i\in I_B$ if and only if $(C_i\cap X)\subset B$. 

We let $$R'= R \cup (\cup_{i\in I_R} D_i \cap \Omega)\quad \hbox{and} \quad B'= B \cup (\cup_{i\in I_B} D_i\cap\Omega)\,.$$
We obtain in this way a cover of $\Omega$ by two disjoint open sets, so that one of them has to be empty. Therefore, one
of $R$ or $B$ has to be empty as well, establishing the connectedness of $X$. 
\end{proof}

\begin{proof}[Proof of Proposition \ref{prop:schotoprop}] To show that $\mathcal{S}$ is connected and with no cut points, we apply Lemma~\ref{lma:semilocalcnd} twice: with $\Omega =S^2$ first and then
with $\Omega=S^2\setminus \{x\}$, for any $x\in \mathcal{S}$. As each
 $C_i\subset \mathcal{S}$  is a closed simple curve, it cannot be disconnected by removing at most one point and it follows that $\mathcal{S}$ and $\mathcal{S}\setminus \{x\}$ are both connected.

Now let $x,y\in \mathcal{S}$ be two points and consider $\Omega=S^2\setminus \{x,y\}$. If no 
$C_i \subset \mathcal{S}$ contains both $x$ and $y$, the previous argument applies and we see that $x,y$ cannot form a cut pair. We can thus assume that there is an $i\in I$ such that $x,y\in C_i$. Let $\gamma$ be a properly embedded arc in $\Bar{D_i}$ connecting $x$ to $y$. $D_i\setminus\gamma$ is the union of two open disks, $D$ and $D'$, each adjacent to precisely one connected component of $C_i\setminus\{x,y\}$.  According to the remark before the proof of Lemma~\ref{lma:semilocalcnd} we can now apply Lemma~\ref{lma:semilocalcnd} to the closed set $S=\mathcal{S}\cup \gamma$ with $\Omega=S^2\setminus\gamma$  to conclude that $S\setminus\gamma=\mathcal{S}\setminus \{x,y\}$ must be connected. 
This shows that $\mathcal{S}$ has no cut pairs. One can also see this by noting that one circle cannot disconnect a Schottky set. Indeed, if one filled in the remaining circles the result is a disk. 
As $\mathcal{S}$ is an $E$-set, we deduce from \cite[Theorem VI.4.4]{whyburnantop} that it is also locally connected, hence arcwise connected \cite[Theorem II.5.1]{whyburnantop}.
\end{proof}

Our first key result is that the boundaries of the $D_i$ are topologically distinguished, generalizing the case of a Sierpi\'nski carpet.

\begin{proposition}
\label{boundarychar} 
 Let $\mathcal{S}$ be a topological Schottky set with $\mathcal{S} \simeq 
S^2 \setminus \cup(D_i)$, where each $D_i$ is open. Then the non-separating 
embedded circles of $\mathcal{S}$ are exactly the 
$C_i=\bar D_i \cap \mathcal{S}$. 
\end{proposition}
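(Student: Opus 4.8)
The statement has two directions. First, each $C_i = \bar D_i \cap \mathcal{S}$ is a non-separating embedded circle: by (S2) it is the boundary of a Jordan domain, hence an embedded circle, and it does not separate $\mathcal{S}$ because $\mathcal{S} \setminus C_i$ is connected — it is $\mathcal{S} \cap \Omega$ for $\Omega = S^2 \setminus \bar D_i$, an open connected set (the complement of a closed disk) meeting each $\partial D_j$ with $j \neq i$ in a connected set (it is $\partial D_j$ minus at most the single point of $\bar D_i \cap \bar D_j$, by (S3), so an arc or a circle), and meeting $\partial D_i$ in the empty set; so Lemma~\ref{lma:semilocalcnd} applies. Hence removing $C_i$ leaves $\mathcal{S}$ connected, confirming it is non-separating.

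The harder direction is the converse: any non-separating embedded circle $J \subset \mathcal{S}$ equals some $C_i$. The plan is to use the Jordan curve theorem on $S^2$: $J$ separates $S^2$ into two open disks $U_1, U_2$. The key point will be to show that one of the $U_k$, say $U_2$, contains no point of $\mathcal{S}$, so that $U_2$ lies entirely inside one complementary disk $D_i$; since $J = \partial U_2 \subset \bar D_i$ and $J \subset \mathcal{S}$, we get $J \subset C_i$, and then equality follows because $C_i$ is itself a circle and a circle cannot properly contain another circle (or: $\partial D_i = \partial U_2$ forces $D_i = U_2$). So suppose for contradiction that both $U_1$ and $U_2$ meet $\mathcal{S}$. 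Then $\mathcal{S} \setminus J = (\mathcal{S} \cap U_1) \sqcup (\mathcal{S} \cap U_2)$ is a partition into two relatively open nonempty pieces, i.e. $J$ separates $\mathcal{S}$ — contradicting that $J$ is non-separating. Wait: this shows directly that if $J$ is non-separating then one side misses $\mathcal{S}$, so the argument is actually immediate once phrased correctly. The one subtlety I expect to be the main obstacle is ensuring that $\mathcal{S} \cap U_k$ is genuinely nonempty versus possibly empty for topological reasons, and handling the edge case where $U_k \cap \mathcal{S} = \emptyset$ but $U_k$ is not contained in a single $D_i$ — this cannot happen since $U_k$ is a connected open subset of $S^2 \setminus \mathcal{S} = \bigsqcup D_i$, hence lies in one $D_i$.

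Let me organize the steps. \emph{Step 1:} Show each $C_i$ is a non-separating embedded circle, via (S2) and Lemma~\ref{lma:semilocalcnd} applied to $\Omega = S^2 \setminus \bar D_i$ (using (S3), (S4) to check the boundary-intersection hypothesis). \emph{Step 2:} Let $J \subset \mathcal{S}$ be any embedded circle; apply the Jordan curve theorem to write $S^2 \setminus J = U_1 \sqcup U_2$ with each $U_k$ an open disk and $\partial U_k = J$. \emph{Step 3:} Observe $\mathcal{S} \setminus J = (U_1 \cap \mathcal{S}) \sqcup (U_2 \cap \mathcal{S})$ with each piece relatively open in $\mathcal{S} \setminus J$; since $J$ is non-separating, $\mathcal{S} \setminus J$ is connected (by \cite[Proposition~3.5]{...} — actually it is Proposition~\ref{prop:schotoprop} that gives $\mathcal{S}$ has no cut pairs, but here I just need connectedness of $\mathcal{S}\setminus J$, which is the definition of non-separating), so one piece, say $U_2 \cap \mathcal{S}$, is empty. \emph{Step 4:} Then $U_2 \subset S^2 \setminus \mathcal{S} = \bigsqcup_i D_i$; as $U_2$ is connected, $U_2 \subset D_i$ for a unique $i$. \emph{Step 5:} Conclude $J = \partial U_2 \subset \overline{U_2} \subset \bar D_i$, so $J \subset \bar D_i \cap \mathcal{S} = C_i$; since both $J$ and $C_i$ are embedded circles and $J \subset C_i$, we get $J = C_i$. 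The main obstacle is really just Step 4–5 bookkeeping: making sure $U_2$ cannot straddle two different $D_i$'s (connectedness settles it) and that $J \subsetneq C_i$ is impossible (an embedded circle has no proper subset homeomorphic to a circle, since $S^1$ is not homeomorphic to a proper subset of itself).

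\medskip

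\noindent\emph{(Note: the plan above assumes the ``non-separating'' in the statement means $\mathcal{S}\setminus J$ is connected; if instead it meant $S^2\setminus J$ does not separate points of $\mathcal{S}$ the argument is essentially the same with $S^2$ in place of $\mathcal{S}$ in Step 3, but one must additionally invoke local connectivity of $\mathcal{S}$ from Proposition~\ref{prop:schotoprop} to pass between the two notions.)}
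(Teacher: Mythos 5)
Your proof is correct and takes essentially the same route as the paper: the forward direction is Lemma~\ref{lma:semilocalcnd} applied to $\Omega=S^2\setminus\bar D_i$ (with (S3) supplying the connectedness hypothesis on the $\partial D_j\cap\Omega$), and the converse is the Jordan curve theorem together with the observation that a circle in $\mathcal{S}$ which does not bound a complementary component $D_i$ has points of $\mathcal{S}$ on both sides and hence separates. Your Steps~3--5 are just the contrapositive phrasing of the paper's argument, with the bookkeeping (connectedness of $U_2$ forcing $U_2\subset D_i$, and a circle having no proper subcircle) spelled out slightly more explicitly.
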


\begin{proof}
Let $C$ be an embedded circle in $\mathcal{S}\subset S^2$. By the Jordan curve
theorem, the complement of $C$ consists of two open discs $O$ and $O'$. 
Note that $C$ is a $C_i$ if and
only if either $O$ or $O'$ coincides with $D_i$. If this is not the case, both 
$O$ and $O'$ contain points of $\mathcal{S}$ and $C$ separates $\mathcal{S}$.

We want to show that if $C=C_i$ then $C$ does not separate $\mathcal{S}$.
Let us consider $\Omega = S^2\setminus \bar D_i$. Condition (S3) ensures that Lemma
\ref{lma:semilocalcnd} applies to prove the connectedness of $\mathcal{S}\setminus C_i$.
\end{proof}

\begin{corollary}\label{c:extension}
Any homeomorphism $h:\mathcal{S}_1\to \mathcal{S}_2$  between two topological  Schottky sets is the restriction of a
self-homeomorphism $H:S^2\to S^2$ of the sphere. \end{corollary}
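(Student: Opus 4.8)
The plan is to extend the homeomorphism $h : \mathcal{S}_1 \to \mathcal{S}_2$ over each complementary disk of $\mathcal{S}_1$ and then check that the resulting map of $S^2$ is a homeomorphism. The key structural input is Proposition~\ref{boundarychar}: the non-separating embedded circles of a topological Schottky set are exactly the boundary circles $C_i = \bar D_i \cap \mathcal{S}$ of its complementary Jordan domains. Since $h$ is a homeomorphism, it carries non-separating embedded circles to non-separating embedded circles, so it induces a bijection between the families $\{C_i\}_{i\in I_1}$ of boundary circles of $\mathcal{S}_1$ and $\{C'_j\}_{j\in I_2}$ of $\mathcal{S}_2$; moreover $h|_{C_i} : C_i \to C'_{\sigma(i)}$ is a homeomorphism of circles for the induced bijection $\sigma : I_1 \to I_2$.

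First I would fix, for each $i$, the Jordan domain $D_i \subset S^2$ with $\partial D_i = C_i$, and likewise $D'_{\sigma(i)}$ with $\partial D'_{\sigma(i)} = C'_{\sigma(i)}$. By the Schoenflies theorem the homeomorphism $h|_{C_i} : \partial D_i \to \partial D'_{\sigma(i)}$ extends to a homeomorphism $h_i : \bar D_i \to \bar D'_{\sigma(i)}$ of the closed disks. Define $H : S^2 \to S^2$ by $H|_{\mathcal{S}_1} = h$ and $H|_{\bar D_i} = h_i$; these agree on the overlaps $C_i = \mathcal{S}_1 \cap \bar D_i$, so $H$ is well defined, and it is a bijection because $S^2 = \mathcal{S}_1 \sqcup \bigsqcup_i D_i$, $S^2 = \mathcal{S}_2 \sqcup \bigsqcup_j D'_j$, $\sigma$ is a bijection, and each $h_i$ restricts to a bijection $D_i \to D'_{\sigma(i)}$ (using conditions (S2)--(S4), which guarantee the $D_i$ are disjoint Jordan domains and the only overlaps between the pieces of the decomposition are the single-point incidences, which lie on the circles where the maps already match).

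The remaining point is continuity of $H$ and of $H^{-1}$; by symmetry and compactness of $S^2$ it suffices to prove $H$ is continuous, and this is where the $E$-set condition (S5) is essential. Continuity at a point of a disk $D_i$ or on the interior of $\mathcal{S}_1$-adjacent pieces is clear from continuity of $h$ and of the $h_i$ together with local finiteness, so the only issue is continuity at a point $x \in \mathcal{S}_1$: one must show that if $U \ni H(x)$ is open then $H$ maps a neighborhood of $x$ into $U$. The subtlety is that a neighborhood of $x$ meets infinitely many disks $D_i$, and one needs $H(\bar D_i) = \bar D'_{\sigma(i)}$ to be small whenever $\bar D_i$ is close to $x$. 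This follows because $\mathcal{S}_2$ is an $E$-set: given the open cover of $S^2$ by $U$ together with $S^2 \setminus \overline{\{H(x)\}}$-type sets, condition (S5) for $\mathcal{S}_2$ forces all but finitely many $D'_j$ — hence all but finitely many $\bar D'_{\sigma(i)}$ — to lie inside a single member of any chosen cover; combined with continuity of $h$ on the compact set $\mathcal{S}_1$ one controls the finitely many exceptional disks directly. Spelling out the $\varepsilon$--$\delta$ bookkeeping of this last argument — choosing a small ball around $H(x)$ and arguing that only finitely many $\bar D'_{\sigma(i)}$ fail to be contained in it, then pulling back — is the main technical obstacle, but it is routine once Proposition~\ref{boundarychar} and (S5) are in hand. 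Finally, since $H : S^2 \to S^2$ is a continuous bijection of a compact Hausdorff space, it is a homeomorphism, and by construction $H|_{\mathcal{S}_1} = h$.
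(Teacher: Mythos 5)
Your proposal is correct and follows essentially the same route as the paper: Proposition~\ref{boundarychar} identifies the boundary circles intrinsically so that $h$ permutes them, Schoenflies extends $h$ over each complementary Jordan domain, and the $E$-set condition (S5) is what makes the glued map continuous. The paper's own proof is just a terser version of exactly this argument, leaving the continuity bookkeeping implicit.
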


This implies that we may define a topological Schottky set as an abstract compact subset homeomorphic to that of an embedded topological
Schottky set as above.

\begin{proof} By Proposition \ref{boundarychar}, $h$ maps boundary components $\{C_i^1\}$  to boundary components $\{C_i^2\}$. 
As these components are  Jordan curves, one may extend $h:C_i^1\to C_i^2$ as a homeomorphism $H_i:D_i^1\to D_i^2$ for 
each $i\in I$. Since topological Schottky sets are $E$-sets, these local homeomorphisms 
induce a global homeomorphism $H:S^2\to S^2$.
\end{proof}

\begin{proposition}\label{prop:schotplanar} Let $(G,\PP)$ be a relatively hyperbolic pair. If its Bowditch boundary is homeomorphic
to a topological Schottky set, then $(G,\PP)$ is a geometrically finite convergence group on $S^2$.
\end{proposition}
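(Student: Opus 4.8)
The plan is to show that the abstract relatively hyperbolic action on the Bowditch boundary can be promoted to an action on $S^2$, using that a topological Schottky set $\mathcal{S}$ is an $E$-set (Proposition~\ref{prop:schotoprop}) whose complementary components are Jordan domains (S2) that are pairwise almost disjoint (S3)--(S4). First I would fix a relatively hyperbolic action of $(G,\PP)$ on a proper geodesic hyperbolic space $X$ realizing $\partial(G,\PP)\simeq \mathcal{S}$, and regard $\mathcal{S}$ as embedded in $S^2$ via Definition~\ref{def:schottky}. The group $G$ acts on $\mathcal{S}$ by homeomorphisms, so by the Corollary following Proposition~\ref{boundarychar} each $g\in G$ extends to a self-homeomorphism of $S^2$; the key point, which uses the topological characterization in Proposition~\ref{boundarychar}, is that $g$ permutes the non-separating embedded circles $C_i=\bar D_i\cap\mathcal{S}$, hence permutes the complementary Jordan domains $\{D_i\}$, and the extension can be chosen so as to respect this permutation.

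The heart of the argument is to make this extension \emph{equivariant}, i.e.\ to produce a homomorphism $G\to \mathrm{Homeo}(S^2)$ restricting to the given action on $\mathcal{S}$. The natural route is to quotient: let $Z = S^2/\!\sim$ where each $\bar D_i$ is collapsed to a point. Since $\mathcal{S}$ is an $E$-set, the collection $\{\bar D_i\}$ is a null-sequence of disjoint continua with connected complements, so Moore's theorem (Theorem~\ref{thm:moore}) applies and $Z$ is a $2$-sphere. The action of $G$ on $\mathcal{S}$ induces an action on the set of collapsed disks, hence a genuine topological action of $G$ on $Z\cong S^2$, and one checks (again using that $\{\bar D_i\}$ is a null-sequence together with the convergence property of $G$ acting on $\mathcal{S}\subset Z$) that this action is a convergence action with limit set the image of $\mathcal{S}$. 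Alternatively, and perhaps more directly, one invokes \cite[Thm.~5.8]{ph:unifplanar} as in the proof of Theorem~\ref{thm:blowup}: $\mathcal{S}$ is a planar continuum with a convergence group action and no local cut points at the relevant circles, so the action extends to a convergence action on $S^2$. I would then verify geometric finiteness: every point of $\mathcal{S}=\partial(G,\PP)$ is a conical limit point or a bounded parabolic point for the action on $X$, and this property is preserved when passing to the action on $S^2$ because $\mathcal{S}$ sits in $S^2$ with the subspace topology and the complementary disks are wandering domains for $G$ (they lie in the ordinary set of the new action); the maximal parabolic subgroups remain exactly $\PP$.

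The step I expect to be the main obstacle is the equivariance and the careful identification of the extended action as a \emph{convergence} action: extending a single homeomorphism of $\mathcal{S}$ over $S^2$ is elementary, but doing it coherently for all of $G$ and checking that a sequence $(g_n)$ of distinct elements collapsing uniformly to a point on $\mathcal{S}\setminus\{b\}$ also collapses uniformly on $S^2\setminus\{b\}$ requires controlling the complementary disks. This is exactly where the $E$-set condition (S5) is essential: since $\{\bar D_i\}$ is a null-sequence and $g_n$ permutes these disks, the disks near $b$ are the only ones that can fail to shrink, so the uniform convergence on compacta of $\mathcal{S}\setminus\{b\}$ propagates to $S^2\setminus\{b\}$. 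I would organize this exactly as in the proof of Theorem~\ref{thm:blowup}, where the analogous bookkeeping with regular neighborhoods and null-sequences is carried out, so that the present proof can largely cite that machinery. Finally, Proposition~\ref{prop:schotoprop} guarantees $\mathcal{S}$ is connected with no cut points or cut pairs, which rules out degenerate behavior and confirms the boundary is genuinely a planar continuum of the expected type, so that the resulting $(G,\PP)$ is a geometrically finite convergence group on $S^2$ in the sense of Section~\ref{sec:S2}.
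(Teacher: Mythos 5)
Your second route is exactly the paper's proof: Proposition~\ref{boundarychar} shows that any homeomorphism of $\mathcal{S}$ permutes the boundary circles $C_i$, hence the complementary Jordan domains, and then \cite[Thm.~5.8]{ph:unifplanar} extends the convergence action on $\mathcal{S}\subset S^2$ to a convergence action on the whole sphere; geometric finiteness is automatic because the restriction of the extended action to the limit set $\mathcal{S}$ is the original action on the Bowditch boundary, so every point is already conical or bounded parabolic with maximal parabolics in $\PP$. Your first route, however, does not work as written: collapsing each closed disk $\bar D_i$ to a point via Moore's theorem identifies the entire circle $\partial D_i\subset\mathcal{S}$ to a single point, so the induced action on the quotient sphere $Z$ restricts to an action on a proper quotient of $\mathcal{S}$, not on $\mathcal{S}$ itself --- you would change the very boundary you are trying to realize, and the resulting pair could not have Bowditch boundary $\mathcal{S}$. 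Since you present the Moore construction only as an alternative and immediately fall back on the extension theorem, the proposal as a whole reaches the correct argument, but the quotient paragraph should be deleted (what is needed is an equivariant extension \emph{over} the disks fixing $\mathcal{S}$ pointwise in the limit, which is precisely what \cite[Thm.~5.8]{ph:unifplanar} provides, rather than a collapse of them).
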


\begin{proof} We may assume that $G$ acts as a convergence group action on a topological Schottky $\mathcal{S}\subset S^2$. 
Proposition \ref{boundarychar} implies that $G$ preserves the collection of boundary circles. Therefore, we may apply
 \cite[Thm. 5.8]{ph:unifplanar} and extend in this way the action as a global convergence  group action on the sphere.
\end{proof}

\begin{corollary}\label{cor:parabschottky} Let $(G,\PP)$ be a relatively hyperbolic group pair with Bowditch boundary a topological Schottky set. 
The set $\cup_{i\neq j\in I}(\Bar{D_i}\cap \Bar{D_j})$ corresponds to the set of parabolic points whose stabilizers are 2-ended. In particular, if $(G, \PP)$ has a Schottky set boundary, then $G$ is finitely generated. 
\end{corollary}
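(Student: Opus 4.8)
The plan is to show a double inclusion between the set of ``incidence points'' $E := \bigcup_{i\neq j}(\overline{D_i}\cap\overline{D_j})$ and the set $T$ of parabolic points with 2-ended stabilizer. The main tool is the description of parabolic points already developed in Section~\ref{sec:S2}: by Proposition~\ref{prop:schotplanar}, $(G,\PP)$ is a geometrically finite convergence group on $S^2$, so the limit set is all of $\mathcal{S}$, each point of $\mathcal{S}$ is conical or bounded parabolic, and (by Proposition~\ref{boundarychar} and Corollary~\ref{cor:lc}) the components $C_i=\overline{D_i}\cap\mathcal{S}$ are precisely the non-separating embedded circles, permuted by $G$, and every cut point of $\mathcal{S}$ is parabolic. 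Note $\mathcal{S}$ is connected (Proposition~\ref{prop:schotoprop}), so there is a single component and all of $\PP$ consists of relatively quasiconvex point-stabilizers with the full boundary as limit set only in degenerate cases; the relevant peripheral subgroups here are the $P=\mathrm{Stab}(p)$ for parabolic $p$.

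First I would establish $E\subseteq T$. Fix $p\in\overline{D_i}\cap\overline{D_j}$ with $i\neq j$. By (S3)--(S4) the unordered pair $\{i,j\}$ is determined by $p$, and since the $D_k$ form a null-sequence, the stabilizer of $p$ in $G$ must preserve the finite (here: two-element) set of disks whose closures contain $p$, hence has an index-$\le 2$ subgroup fixing $i$ and $j$ individually; such an element stabilizes both Jordan curves $C_i$, $C_j$. Now $p$ is a cut point of $\mathcal{S}$: removing $p$ disconnects $\mathcal{S}$ because the arc $C_i\setminus\{p\}$ together with $C_j\setminus\{p\}$ forms, after adding a crosscut through $D_i\cup D_j$, a Jordan curve through $p$ separating $S^2$ with points of $\mathcal{S}$ on both sides (this is the same crosscut argument used after Lemma~\ref{lma:parabolicvsfuchs} and in the proof of Proposition~\ref{prop:schotoprop}). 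By Corollary~\ref{cor:lc}, $p$ is therefore parabolic. To see its stabilizer $P$ is 2-ended, apply Proposition~\ref{prop:parabsubgp}: the disks $D_i,D_j$ are ordinary components of $S^2\setminus\mathcal{S}$ containing $p$ on their boundary, and $P$ acts cocompactly on $S^2\setminus(\{p\}\cup\Omega_p)$. Since the component of $\mathcal{S}$ containing $p$ is $\mathcal{S}$ itself (not a singleton), and the disks at $p$ are exactly $D_i,D_j$, the quotient $(\mathcal{S}\setminus\{p\})/P$ is compact and one-dimensional, in fact (using that $p$ is doubly accessible from each of $D_i,D_j$ by the structure of $C_i,C_j$) $P$ is virtually $\mathbb{Z}$; alternatively invoke Corollary~\ref{cor:rank1} in reverse — the point $p$ lies on a non-singleton component and admits more than one access (from $D_i$ and from $D_j$), so $P$ cannot be of higher rank without producing a non-null family of accesses, forcing $P$ 2-ended.

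For the reverse inclusion $T\subseteq E$, let $p$ be a parabolic point with $P=\mathrm{Stab}(p)$ 2-ended. By Proposition~\ref{prop:parabsubgp}, $P$ acts cocompactly on $S^2\setminus(\{p\}\cup\Omega_p)$ where $\Omega_p$ is the union of ordinary components touching $p$; since $P$ is 2-ended, this forces $\Omega_p$ to be non-empty and, in fact, to consist of exactly one or two $P$-orbits of disks, with $p$ doubly accessible from each (Corollary~\ref{cor:rank1}). If $p$ lay on the boundary of only a single disk $D_i$, then $p$ would be uniquely or doubly accessible from $D_i$; double accessibility together with the quotient being a circle's worth of data would make $\mathrm{Stab}(C_i)$ act on $C_i$ with $P$ of finite index acting as a loxodromic-type $\mathbb{Z}$, but then $\mathcal{S}\setminus\{p\}$ would be connected (one checks via Lemma~\ref{lma:semilocalcnd} applied to $\Omega=S^2\setminus\overline{D_i}$, exactly as in Proposition~\ref{boundarychar}), contradicting that a 2-ended parabolic fixes a cut point — here I would note that a bounded parabolic point of a virtually-$\mathbb{Z}$ group has $(\mathcal{S}\setminus\{p\})/P$ compact, and if $p$ is not a cut point this is a circle, forcing $\mathcal{S}$ to be a circle, which is excluded since $\mathcal{S}$ has nonempty complement with $\ge 1$ disk and no cut pairs would then fail, etc. Cleanest: a 2-ended $P$ fixing a non-cut, bounded-parabolic $p$ forces $\mathcal{S}$ homeomorphic to a circle by Corollary~\ref{cor:1or2}-type reasoning, absurd. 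Hence $p$ is a cut point, so $p$ lies on $\ge 2$ disk-boundaries, i.e. $p\in E$.

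The main obstacle I anticipate is the $T\subseteq E$ direction: ruling out a 2-ended parabolic fixing a point that lies on only one (or zero) disk boundaries. The zero-disk case is handled directly by Proposition~\ref{prop:parabsubgp} (then $P$ acts cocompactly on $S^2\setminus\{p\}$, so $P$ is a surface group of a closed surface or a 1-ended group — not 2-ended). The one-disk case is the delicate one and requires combining the accessibility dichotomy of Corollary~\ref{cor:rank1} with the connectivity analysis of $\mathcal{S}\setminus\{p\}$ via Lemma~\ref{lma:semilocalcnd}: I would argue that if $p$ is on exactly one $C_i$ then $p$ is not a cut point of $\mathcal{S}$ (Proposition~\ref{boundarychar}'s proof shows $C_i$ itself is non-separating, and the same Lemma~\ref{lma:semilocalcnd} argument with a crosscut shows no single point of $C_i$ separates $\mathcal{S}$), whence by the structure of bounded parabolic points $(\mathcal{S}\setminus\{p\})/P$ is a compact surface-with-the-right-properties forcing $P$ to have one end — contradiction. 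Making this last compactness/end-count rigorous, rather than hand-wavy, is where the real work lies.
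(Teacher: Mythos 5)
Your argument for the inclusion $E\subseteq T$, where $E=\cup_{i\neq j}(\Bar{D_i}\cap\Bar{D_j})$, rests on the claim that a tangency point $p\in\Bar{D_i}\cap\Bar{D_j}$ is a cut point of $\mathcal{S}$, and this claim is false: Proposition~\ref{prop:schotoprop} states precisely that a topological Schottky set has \emph{no} cut points (think of a tangency point of the Apollonian gasket). Concretely, a Jordan curve witnessing that $p$ is a cut point would have to meet $\mathcal{S}$ only at $p$; away from $p$ it would then be an arc in the disjoint union $\cup_k D_k$, hence contained in a single $D_k$, so the curve lies in $\Bar{D_k}$ and bounds a disk inside $D_k$ --- it cannot have points of $\mathcal{S}$ on both sides. (The union $C_i\cup C_j$ is a wedge of circles, not a Jordan curve, and a single crosscut of $D_i\cup D_j$ would have to pass through $p$ twice.) Since you then deduce that $p$ is parabolic by applying Corollary~\ref{cor:lc} to this non-existent cut point, the forward inclusion has no support. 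The paper obtains parabolicity quite differently: the circles $C_i$, $C_j$ are limit sets of relatively quasiconvex subgroups $H_i$, $H_j$ by Proposition~\ref{prop:nullseq}(2), and Proposition~\ref{suswarup} applied to $H_i\cap H_j$ forces the single point $\Lambda_{H_i}\cap\Lambda_{H_j}=\{p\}$ to be parabolic; then, up to index $2$, $\Stab(p)$ preserves each Jordan curve $C_i$ while fixing $p$, whence it is $2$-ended. Your alternative appeal to ``Corollary~\ref{cor:rank1} in reverse'' is not valid either, as that implication only goes from $2$-endedness to accessibility.

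In the reverse inclusion $T\subseteq E$ you handle $\Omega_p=\emptyset$ correctly, but the key case --- $p$ on the boundary of exactly one disk --- is, by your own admission, left hand-wavy, and it again leans on the cut-point dichotomy. The paper's argument here is short: if $\Omega_p$ has a single component $D_i$, then $\Stab(p)$ preserves $\partial D_i$ and fixes $p$, hence is $2$-ended and generated up to finite index by a translation of $S^2\setminus\{p\}$ (Lemma~\ref{lma:translation}); such a group cannot act cocompactly on $S^2\setminus(\{p\}\cup D_i)$, contradicting Proposition~\ref{prop:parabsubgp}. By (S4) a nonempty $\Omega_p$ therefore has exactly two components, so every $2$-ended parabolic point lies in $E$. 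As written, your proposal has a genuine gap in both directions.
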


\begin{proof} Let $p$ be a parabolic point. Let $\Omega_p$ denote the union of components of the ordinary set that contain $p$ on their
boundaries: according to the definition of a topological Schottky set, $\Omega_p$ is either empty, or has one or two components. By  Proposition \ref{prop:parabsubgp}, the action
on  $S^2\setminus (\{p\}\cup\Omega_p)$ is cocompact. 

If $\Omega_p=\emptyset$, then  $(S^2\setminus\{p\})/\Stab (p)$ is a compact surface orbifold.  If $\Omega_p$ has a single component,
then $\Stab (p)$ is cyclic since it preserves $\partial\Omega_p$, but this prevents the quotient to be compact on its complement as the action of 
the cyclic group is generated by a translation by Lemma \ref{lma:translation}. Therefore, if $\Omega_p$ is non-empty, then it is the
union of two disks. Conversely, if two boundary components intersect, then Proposition \ref{suswarup} implies that the intersection
of their stabilisers  is a parabolic point $p$. Up to index $2$, $\Stab (p)$ fixes each component, hence is a rank 1 parabolic point.

Let us check the group is finitely generated. It is enough to check that the peripheral subgroups are finitely generated, by \cite[Corollary p 371]{GPnonfg}. Stabilizers of parabolic points in the closure of the ordinary set are virtually cyclic. If there are any parabolic points that are not in the closure of the ordinary set, then by Proposition \ref{prop:parabsubgp}, the stabilizer is virtually a closed surface group and hence finitely generated.  
\end{proof}

\section{Incidence graphs for topological Schottky sets} 
\label{sec:incidence} 
We recall Definition \ref{def:schottky}.  A {\it topological Schottky set} $\mathcal{S}$  is a  connected, locally connected, 1-dimensional subset of the sphere such that
the complement is a union of pairwise disjoint Jordan domains. The closure of each component of the complement is homeomorphic to a disc $\Bar{D_i}$.  The intersection $\Bar{D_i} \cap \Bar D_j$,  $i\neq j$, is at most one point and a point of $\mathcal{S}$ belongs to at most two $(\Bar D_i)$. A topological Schottky set has no cut points nor cut pairs.

In this situation, we can draw more conclusions from the above construction in Section \ref{sec:blow}. 

\begin{definition} We define the \emph{incidence graph $\Gamma(\mathcal{S})$ of the topological Schottky set $\mathcal{S}$}.  
Let $\Gamma$ be the bipartite graph with vertex set the union of vertices $\{v_i\}_{i\in I}$, associated to the components 
$\{D_i\}_{i\in I}$ or, equivalently by Proposition~\ref{boundarychar}, to the embedded non-separating circles in $\mathcal{S}$, and vertices $v_p$, associated to intersections $\Bar{D_i}\cap \Bar{D_j}$, such that there is a non oriented edge between $v_i$ and $v_p$ if and only if $p\in \partial D_i$. 
Since we are working with a topological Schottky set, 
 $\Gamma$ injects continuously into $S^2$. 
To see this we pick for each component $D_i$ a base-point $v_i\in D_i$ and join $v_i$ to each $p\in \partial D_i \cap \partial D_j$ with an arc in $\Bar{D_i}$.

If $(G, \mathcal{P})$ is a relatively hyperbolic group pair whose boundary is a topological Schottky set, we will often denote this graph by $\Gamma(G)$ or $\Gamma(G, \mathcal{P})$. As observed in Lemma~\ref{lem:parabschottky}, each edge corresponds to a rank-1 parabolic point.
Also, we may ignore the vertices corresponding to the rank-1 parabolic points since this will not change the topology of the graph. 

\end{definition}

The following is a consequence of Proposition~\ref{boundarychar}.

\begin{lemma} 
If $\partial(G, \mathcal{P})$ has Bowditch boundary a topological Schottky set, then $G$ acts on $\Gamma(G)$. \end{lemma}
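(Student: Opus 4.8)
The plan is to show that the action of $G$ on $\partial(G,\mathcal{P})$ induces a simplicial action on $\Gamma(G)$ by checking that $G$ permutes both vertex sets and respects adjacency. The key input is Proposition~\ref{boundarychar}: the non-separating embedded circles of the topological Schottky set $\mathcal{S} = \partial(G,\mathcal{P})$ are exactly the $C_i = \bar D_i \cap \mathcal{S}$. This is a purely topological characterization, so it is preserved by any self-homeomorphism of $\mathcal{S}$, and in particular by the homeomorphisms given by the action of $G$.

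First I would note that each $g \in G$ acts as a homeomorphism of $\mathcal{S}$ (by Proposition~\ref{prop:schotplanar} this even extends to a homeomorphism of $S^2$, though we only need the action on $\mathcal{S}$). Since $g$ carries non-separating embedded circles to non-separating embedded circles, Proposition~\ref{boundarychar} gives that $g$ permutes the collection $\{C_i\}_{i \in I}$; equivalently, $g$ induces a permutation $i \mapsto g(i)$ of the index set $I$ with $g(C_i) = C_{g(i)}$, and correspondingly $g(D_i) = D_{g(i)}$ for the complementary Jordan domains. This gives the action of $G$ on the vertices $\{v_i\}$. Next, for the vertices $v_p$ associated to intersection points $p \in \bar D_i \cap \bar D_j$: since $g$ is a homeomorphism carrying $\bar D_i \cap \bar D_j$ to $\bar D_{g(i)} \cap \bar D_{g(j)}$, it sends such an intersection point $p$ to an intersection point $g(p) \in \bar D_{g(i)} \cap \bar D_{g(j)}$, so $G$ permutes the vertices $v_p$ as well (this is of course just the restriction of the action on $\mathcal{S}$, since each $v_p$ is literally the point $p$).

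Finally I would check that adjacency is preserved: there is an edge between $v_i$ and $v_p$ precisely when $p \in \partial D_i$, and since $g$ is a homeomorphism, $p \in \partial D_i$ if and only if $g(p) \in g(\partial D_i) = \partial D_{g(i)}$. Hence $g$ sends the edge $\{v_i, v_p\}$ to the edge $\{v_{g(i)}, v_{g(p)}\}$, and the action is simplicial. Composing, one gets a homomorphism $G \to \operatorname{Aut}(\Gamma(G))$, which is the asserted action. I do not anticipate a genuine obstacle here: the only subtlety is making sure that the combinatorial data defining $\Gamma(G)$ (the $C_i$, the intersection points, the incidences) is intrinsic to the topology of $\mathcal{S}$ and hence $G$-equivariant, and this is exactly what Proposition~\ref{boundarychar} and conditions (S2)--(S4) guarantee; the remainder is a routine verification that a homeomorphism respects the bipartite incidence structure.
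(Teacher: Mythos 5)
Your proof is correct and follows exactly the paper's line: the paper states this lemma as an immediate consequence of Proposition~\ref{boundarychar}, and your write-up simply makes explicit the (routine) verification that the topological characterization of the circles $C_i$ forces $G$ to permute the vertices and preserve incidence. No gaps; the only point worth keeping in mind is that to speak of $g(D_i)$ rather than $g(C_i)$ one should invoke the extension of the action to $S^2$ (Proposition~\ref{prop:schotplanar}), which you do.
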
 

The following was established in the proof of Corollary~\ref{cor:parabschottky}
    
\begin{lemma}\label{lem:parabschottky} 
Let $(G,\PP)$ be a relatively hyperbolic group pair with Bowditch boundary a topological Schottky
set.  The intersection of two $\partial D_i$, which corresponds to an edge in the incidence graph, is a parabolic point with a 2-ended stabiliser. All other parabolic points have stabilisers  
which are virtually
 compact surface orbifold groups. \end{lemma}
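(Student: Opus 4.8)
\textbf{Proof plan for Lemma~\ref{lem:parabschottky}.}
The statement has two halves, and the excerpt already tells us that both ``were established in the proof of Corollary~\ref{cor:parabschottky}''; so the plan is essentially to extract and re-package that argument. First I would recall that, by Proposition~\ref{prop:schotplanar}, $(G,\PP)$ acts as a geometrically finite convergence group on $S^2$ with limit set $\mathcal{S}=\partial(G,\PP)$, and that by Proposition~\ref{boundarychar} the boundary circles $C_i=\Bar{D_i}\cap\mathcal{S}$ are exactly the non-separating embedded circles, so $G$ permutes them. Let $p$ be any parabolic point with stabilizer $P$, and let $\Omega_p$ denote the union of the components of the ordinary set $\Omega_G$ whose closures contain $p$; by Definition~\ref{def:schottky}(S3)--(S4) (each point of $\mathcal{S}$ lies in at most two $\Bar{D_i}$), $\Omega_p$ has $0$, $1$ or $2$ components.

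The core dichotomy is then the one run in Corollary~\ref{cor:parabschottky}. By Proposition~\ref{prop:parabsubgp}, $P$ acts cocompactly on $S^2\setminus(\{p\}\cup\Omega_p)$. If $\Omega_p=\emptyset$, then $P$ acts cocompactly on $S^2\setminus\{p\}$, and since $p$ is a parabolic point of a convergence action this forces $(S^2\setminus\{p\})/P$ to be a compact orbifold, so $P$ is a compact surface orbifold group; by Proposition~\ref{prop:planarparabolics} this is consistent (virtually a closed surface group). If $\Omega_p$ had exactly one component $\Omega$, then $P$ would have to preserve $\partial\Omega$, which is a boundary circle $C_i$, and hence $P$ would be (virtually) cyclic, generated by a translation by Lemma~\ref{lma:translation}; but a translation cannot act cocompactly on the complement $S^2\setminus(\{p\}\cup\Omega)$, a contradiction. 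Hence if $\Omega_p\neq\emptyset$ it has exactly two components, i.e. $p$ is a rank-$1$ (two-ended) parabolic point, and $p\in\Bar{D_i}\cap\Bar{D_j}$ for the two corresponding indices, i.e. $p$ is the vertex of an edge of $\Gamma(\mathcal{S})$.

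For the converse direction, suppose $p\in\Bar{D_i}\cap\Bar{D_j}$ with $i\neq j$, i.e. $p$ is an edge of the incidence graph. The circles $C_i,C_j$ are limit sets of the stabilizers $\Stab(C_i)$, $\Stab(C_j)$ (these are relatively quasiconvex by Proposition~\ref{prop:nullseq}(2) applied to the $G$-orbit of boundary circles, which is a null-sequence since $\mathcal{S}$ is an $E$-set). By Proposition~\ref{suswarup} (Susskind--Swarup), $\Lambda_{\Stab(C_i)}\cap\Lambda_{\Stab(C_j)}=C_i\cap C_j=\{p\}$ is a discrete set of common parabolic points, so $p$ is parabolic; and passing to the index-at-most-$2$ subgroup of $P=\Stab(p)$ fixing each of the two adjacent disks, $P$ is cyclic, hence $p$ is a rank-$1$ parabolic point with two-ended stabilizer. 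This establishes the first sentence; combined with the first paragraph's dichotomy (a parabolic point not of this form has $\Omega_p=\emptyset$, hence compact orbifold stabilizer) it gives the second sentence.

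The plan is thus mostly bookkeeping: the only genuine point requiring care is the case $|\Omega_p|=1$, where one must argue that cocompactness of $P$ on $S^2\setminus(\{p\}\cup\Omega_p)$ is incompatible with $P$ being virtually generated by a translation — this is where I expect the one non-formal step to be, and it is handled exactly as in Corollary~\ref{cor:parabschottky} by noting that a translation's action on the closed strip complement is not cocompact. Everything else is a direct citation of Propositions~\ref{prop:schotplanar}, \ref{boundarychar}, \ref{prop:parabsubgp}, \ref{prop:planarparabolics}, \ref{suswarup} and Lemma~\ref{lma:translation}.
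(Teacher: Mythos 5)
Your proposal is correct and takes essentially the same route as the paper, which proves this lemma inside Corollary~\ref{cor:parabschottky}: the same trichotomy on the number of components of $\Omega_p$ (cocompactness from Proposition~\ref{prop:parabsubgp}, the exclusion of the one-component case via Lemma~\ref{lma:translation}), and the same appeal to Proposition~\ref{suswarup} for the converse. The extra justifications you supply (e.g.\ relative quasiconvexity of the circle stabilizers via Proposition~\ref{prop:nullseq}) only make explicit what the paper uses implicitly.
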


\begin{corollary}\label{cor:comp}
Let $(G,\PP)$ be a relatively hyperbolic group pair with Bowditch boundary a topological Schottky
set. Let $\mathcal{P} = \PP_1 \cup \PP_2$ be as in Definition \ref{def:parabolicsplit} and consider the geometrically finite action of $(G,\PP_2)$ provided by
Theorem \ref{thm:blowup}.   The components of the ordinary set of $\partial(G,\PP_2)$ are in bijection with the components
of the incidence graph $\Gamma$. There is a continuous injective map of $\Gamma$ into $S^2$ such that the image of each cycle in $\Gamma$ separates the Bowditch boundary of $(G,\PP_2)$. \end{corollary}

\begin{proof} We will use the same notation introduced in Section~\ref{sec:blow} for the proof of Theorem~\ref{thm:blowup}: $Y$ is the complement of the union of pairs $K$ of closed horoballs attached to each rank-1 parabolic point $p$ and $\hat{Y}$ its completion after blowing-up, so that there is a natural quotient map $\pi:\hat{Y}\longrightarrow \Bar{Y}$.
Let us consider $\Gamma$ as a subset of $S^2$, cf. Def. 6.1, and let $\Gamma_T=\Gamma \cap Y$ and let $\Gamma_T'$ be its closure in $\hat{Y}$.
This graph is 
disjoint from the limit set, and each edge is cut into two pieces by  a Jordan domain  $D$ bounded by the circle $\pi^{-1}(K)$, $K\in\CCC$. We may then
connect both sides of the edge of $\Gamma'_T$ in $D$ to reconstruct a graph $\Gamma'$ isomorphic to $\Gamma$ which will now be disjoint from the limit set. 

Let us observe that this edge separates in $\overline{D}$ the preimages of the parabolic point, so that any cycle in $\Gamma'$ separates the limit set. 
By construction each connected component of the new ordinary set contains a component of $\Gamma'$ (which might be reduced to a single point). 
To see there is at most one,  we may proceed by contradiction as follows: if two components of $\Gamma'$ belonged to the same component of $\hat{\Omega}_G$,
we could consider a curve joining them in $Y$: a contradiction.
\end{proof}

\begin{thmincidence} Let $\mathcal{S}$ be a topological Schottky set with $\mathcal{S} = \partial(G, \mathcal{P})$.    Then the incidence graph $\Gamma(\mathcal{S})$ has 1, 2 or infinitely many components. The stabilizer of each component is virtually the deck transformation group of a regular planar covering of a closed surface.
\end{thmincidence}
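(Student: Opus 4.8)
The plan is to transfer the combinatorial statement about $\Gamma(\mathcal{S})$ to a topological statement about the ordinary set of the blown-up group $(G,\PP_2)$, where $\PP_2$ is the collection of parabolic subgroups of rank $\ge 2$ as in Definition~\ref{def:parabolicsplit}, and then invoke the results already assembled. By Proposition~\ref{prop:schotplanar}, $(G,\PP)$ is a geometrically finite convergence group on $S^2$, so Theorem~\ref{thm:blowup} applies: $(G,\PP_2)$ is again a geometrically finite convergence group on $S^2$. By Corollary~\ref{cor:comp}, the components of $\Gamma(\mathcal{S})=\Gamma$ are in bijection with the components of the ordinary set $\Omega$ of $(G,\PP_2)$, and this bijection is $G$-equivariant (the $G$-action on $\Gamma$ from Lemma above matches the $G$-action permuting ordinary components under the equivariant map $\phi$ of Theorem~\ref{thm:blowup}). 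Then Proposition~\ref{prop:ordcomp}, applied to the convergence action of $G$ on $S^2$ with limit set $\hat\Lambda=\partial(G,\PP_2)$, says that $\Omega$ has zero, one, two, or infinitely many components. The case of zero components is excluded because $\Gamma$ is non-empty (the index set $I$ is non-empty by (S1), so $\Gamma$ has at least one vertex; equivalently, since $\mathcal{S}=\partial(G,\PP)$ is a proper subset of $S^2$, after blowing up there is genuinely a non-empty ordinary set). This gives the count $1$, $2$, or $\infty$.

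For the stabilizers: fix a component $C$ of $\Gamma$ and let $\Omega_C$ be the corresponding component of $\Omega$, with $\Stab(C)=\Stab(\Omega_C)=:H$. Since $(G,\PP_2)$ is geometrically finite on $S^2$, Lemma~\ref{lma:omegafuchs}'s setup applies only once we know $\hat\Lambda$ is connected; in general $\hat\Lambda$ may be disconnected, but by Proposition~\ref{prop:infends} each non-trivial component of $\hat\Lambda$ has stabilizer hyperbolic relative to conjugates of the peripheral subgroups, and $\overline{\Omega_C}\setminus\Omega_C$ is contained in one such component (or a point). The point is that $\Omega_C$ is an open connected subsurface of $S^2$ on which $H$ acts properly discontinuously, so by \cite[Cor.~3.2]{hruska:walsh} (the same tool used in Lemma~\ref{lma:omegafuchs} and Proposition~\ref{prop:planarparabolics}) $H$ is virtually a surface-(orbifold)-group. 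One must check $\Omega_C$ is a surface with the right properties: it is an open subset of $S^2$, hence a surface, and $H$ acts on it properly discontinuously because the $G$-action on $\Omega$ is properly discontinuous and $H$ preserves the component $\Omega_C$. This yields that each $\Stab(C)$ is a virtual surface group, which is the second assertion.

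The main obstacle I expect is the bookkeeping in the stabilizer claim: making precise that $\Stab(C)$, as a subgroup of $G$, acts properly discontinuously on the single ordinary component $\Omega_C$ and that the resulting quotient orbifold is of the type covered by \cite[Cor.~3.2]{hruska:walsh}, including handling the possible finite normal subgroup acting trivially on $S^2$ (as flagged in Section~\ref{sec:S2}, via \cite[Theorem~1.3]{peter:cyril}), so that the conclusion is genuinely ``virtually a surface group''. A secondary subtlety is confirming that the bijection of Corollary~\ref{cor:comp} is equivariant and that ``virtual surface group'' is meant in the broad sense (virtually free of positive rank, virtually a closed surface group, or virtually a finite-type surface-orbifold group), which is exactly what Proposition~\ref{prop:planarparabolics} and Lemma~\ref{lma:omegafuchs} already deliver for such stabilizers. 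Everything else is assembling Proposition~\ref{prop:schotplanar}, Theorem~\ref{thm:blowup}, Corollary~\ref{cor:comp}, and Proposition~\ref{prop:ordcomp} in sequence.
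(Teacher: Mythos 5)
Your argument for the component count is essentially the paper's: Proposition~\ref{prop:schotplanar} (i.e.\ Proposition~\ref{boundarychar} together with \cite{ph:unifplanar}) to realize the action on $S^2$, Theorem~\ref{thm:blowup} to unpinch the rank-one parabolics, Corollary~\ref{cor:comp} for the ($G$-equivariant) bijection with the components of the blown-up ordinary set, and Proposition~\ref{prop:ordcomp} for the trichotomy, with the zero-component case excluded by (S1). That part is correct and follows the paper step for step.

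For the stabilizers you take a different route, and it has a genuine gap. You pass directly from ``$H=\Stab(\Omega_C)$ acts properly discontinuously on the planar surface $\Omega_C$'' to \cite[Cor.~3.2]{hruska:walsh}. But as that corollary is used in Proposition~\ref{prop:planarparabolics} and Lemma~\ref{lma:omegafuchs}, one needs the acting group to be finitely generated to land on a finite-type surface/orbifold group, and the stabilizer of a single ordinary component is a subgroup of $G$ that you have not shown to be finitely generated (an infinitely generated free group acts properly discontinuously on a planar surface, so proper discontinuity alone cannot deliver ``virtual surface group''). The paper instead applies Proposition~\ref{prop:nullseq}(2) to the null-sequence formed by the (boundaries of the) blown-up ordinary components: each stabilizer is then a relatively quasiconvex subgroup whose limit set is the boundary of the disk it stabilizes; since these subgroups contain no parabolics, they are quasiconvex, hence finitely generated, and being convergence groups on a disk with circle limit set they are in fact virtually closed surface groups --- strictly more than the ``virtually a 2-orbifold group'' your argument would give even after fixing finite generation. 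So the stabilizer claim should be routed through Proposition~\ref{prop:nullseq} rather than through proper discontinuity alone.
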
 

\begin{proof}  Let $\mathcal{S} \simeq 
S^2 \setminus \cup(D_i)$ and $C_i=\partial\Bar{D_i}\subset \mathcal{S}$. 
According to Proposition~\ref{boundarychar}, 
the circles $C_i$ are precisely those that do not separate $\mathcal{S}$ so the group $G$ must map their collection to itself (see also Corollary~\ref{c:extension}).
Therefore,
\cite[Thm. 5.8]{ph:unifplanar} enables us to extend the action onto the whole sphere. The parabolic points are either surface groups that are not accessible from any  component
or rank 1 parabolic points, which correspond to two intersecting disks, as observed in Lemma~\ref{lem:parabschottky}. 
By Lemma \ref{lma:omegafuchs}, there are finitely many orbits of components of $\Omega_G \,(= S^2\setminus \mathcal{S})$ under $G$, and the stabilizer of each one of them is isomorphic
to a geometrically finite Fuchsian group. This enables us to equip $\Omega_G$ with the Poincar\'e metric on each of its components, so that the action of $G$
is distance preserving, and to consider
a fundamental domain $\cD\subset \Omega_G$ given by a finite union of finite sided polygons.

According to Corollary~\ref{cor:comp}, the components of the graph are thus in bijection with those of the 
ordinary set $\Omega_2$ of $(G,\PP_2$) obtained by blowing up the rank 1 elements of $\PP$. Since the action is geometrically finite, there
are 1, 2 or infinitely many components of $\Omega_2$ as seen in Proposition~\ref{prop:ordcomp}.  Note that the components of the limit set of $(G,\PP_2)$ 
form an $E$-set according to Proposition \ref{prop:infends}. Therefore this is also the case for the components of $\Omega_2$.
By Proposition \ref{prop:nullseq} applied to the collection of boundaries of components of $\Omega_2$, 
we deduce that the stabilizers of  components of $\Omega_2$ are relatively quasiconvex subgroups.  

Let $\cD_2=\phi^{-1}(\cD)\subset \Omega_2$,  where $\phi:S^2\to S^2$ denotes  the degree one quotient map provided by Theorem \ref{thm:blowup}. The fundamental domain $\cD$ has only finitely many
ideal vertices, which correspond to rank 1 parabolic points. 
The two edges incident to a parabolic point are mapped to each other by a parabolic element.
It follows that $\overline{\cD_2}$ is a compact subset of $\Omega_2$, and a fundamental domain
for the action of $G$ on $\Omega_2$. Therefore $\Omega_2/G$ is a finite union of closed $2$-orbifolds.

We observe now that the stabilizer of each component $O$ of $\Omega_2$ is precisely the group of deck transformations of the regular planar covering $O\to \Sigma\subset\Omega_2/G$ 
which is the quotient of a closed $2$-orbifold group $\pi_1(\Sigma)$ by its normal subgroup $\pi_1(O)$. Whenever $O$ is simply connected this is in fact a genuine closed 
$2$-orbifold group. As the stabilizer of a component of the incident graph coincides with the stabilizer of the component $O$ in which it is contained, the conclusion follows.
\end{proof}

\section{One component in the incidence graph}
\label{sec:one} 

Here we prove Theorem \ref{thm:one}: 

\begin{thmone}
Let $\mathcal{S}$ be a topological Schottky set with $\mathcal{S} = \partial(G, \mathcal{P})$.
 
When the incidence graph $\Gamma(\mathcal{S})$ has one component, then $G$ is virtually a free product of a free group $F_n$ of rank $n\geq 0$ and some finite index subgroups of groups in $\mathcal{P}$. Moreover, its action is faithful and orientation preserving, and $G$ is covered by a geometrically finite Kleinian group $K$.
\end{thmone}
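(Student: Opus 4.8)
\noindent\emph{Proof plan.} The idea is to pass to the blown-up peripheral structure of Section~\ref{sec:blow} in order to disconnect the boundary and reduce to the Cantor-set case of Section~\ref{sec:general}. Split $\mathcal{P}=\mathcal{P}_1\cup\mathcal{P}_2$ as in Definition~\ref{def:parabolicsplit}; by Lemma~\ref{lem:parabschottky} the groups in $\mathcal{P}_1$ are exactly the two-ended stabilizers of the incidence points $\overline{D_i}\cap\overline{D_j}$, i.e.\ of the vertices (equivalently the edges) of $\Gamma(\mathcal{S})$, while every group in $\mathcal{P}_2$ is a compact $2$-orbifold group. Theorem~\ref{thm:blowup} then gives that $(G,\mathcal{P}_2)$ is a geometrically finite convergence group on $S^2$ together with a $G$-equivariant degree-one map $\phi\colon S^2\to S^2$ carrying $\partial(G,\mathcal{P}_2)$ onto $\mathcal{S}$. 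By Corollary~\ref{cor:comp} the components of the ordinary set of $(G,\mathcal{P}_2)$ are in bijection with the components of $\Gamma(\mathcal{S})$; since $\Gamma(\mathcal{S})$ is assumed connected and $\mathcal{S}\neq S^2$, the ordinary set of $(G,\mathcal{P}_2)$ is non-empty and connected.

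For the algebraic statement I would invoke Corollary~\ref{cor:1or2}: a geometrically finite convergence group on $S^2$ with non-empty connected ordinary set has totally disconnected limit set. As $\partial(G,\mathcal{P}_2)$ surjects onto $\mathcal{S}$, which has more than two points, the pair $(G,\mathcal{P}_2)$ is non-elementary, so $\partial(G,\mathcal{P}_2)$ is perfect; being also compact, metrizable and totally disconnected, it is a Cantor set. Each group in $\mathcal{P}_2$ is a compact $2$-orbifold group, hence residually finite, so Theorem~\ref{thm:Cantor} applies to $(G,\mathcal{P}_2)$ and yields that $G$ is virtually $F\ast(\ast_{i=1}^{m}P_i)$ with $F$ free and each $P_i$ of finite index in some element of $\mathcal{P}_2\subseteq\mathcal{P}$; absorbing the virtually free factors into $F$, one may take each $P_i$ virtually a closed surface group. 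This is the first assertion.

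Now suppose in addition that $G$ is finitely generated and that its action on $\mathcal{S}$ is faithful and orientation preserving. The extension of the action to $S^2$ produced by the blow-up (via \cite[Thm.~5.8]{ph:unifplanar}) remains faithful and orientation preserving up to the usual finite kernel, so $(G,\mathcal{P}_2)$ is a finitely generated, faithful, orientation-preserving geometrically finite convergence group on $S^2$ with non-empty connected ordinary set. The second half of Corollary~\ref{cor:1or2} (i.e.\ \cite[Corollary~5.4]{martin:skora}) then provides a Kleinian group $K$, an isomorphism $\rho\colon K\to G$ and a degree-one map $\psi\colon\hat{\mathbb{C}}\to S^2$ making the covering diagram for the $(G,\mathcal{P}_2)$-action commute. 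Composing with $\phi$, the map $\phi\circ\psi\colon\hat{\mathbb{C}}\to S^2$ is again degree one and intertwines, through $\rho$, the $K$-action with the $G$-action on $\mathcal{S}$; hence $G$, acting on $\mathcal{S}$, is covered by $K$.

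The remaining, and hardest, point is to arrange that $K$ can be taken geometrically finite; Corollary~\ref{cor:1or2} alone only yields a Kleinian group. Here I would use the explicit isomorphism type: through $\rho$, $K$ is isomorphic to a group that is virtually a free product of a free group and finitely many closed $2$-orbifold groups, which is the fundamental group of the compact $3$-manifold $M$ obtained by gluing a handlebody (carrying the free factor) to $I$-bundles over the orbifolds (carrying the $P_i$) along compression disks in their boundaries. Since $M$ is compact, irreducible, atoroidal and has non-empty, non-spherical boundary, Thurston's uniformization theorem for Haken manifolds gives a geometrically finite hyperbolic structure on its interior, hence a geometrically finite Kleinian group $K_0\cong\pi_1(M)$ isomorphic to a finite-index subgroup of $G$; passing to a torsion-free finite-index subgroup recovers precisely the $3$-manifold description in the statement. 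The main obstacle is then to match this $K_0$ with the covering group $K$: one must check that the convergence action of $(G,\mathcal{P}_2)$ coming from the blow-up coincides with the one obtained by collapsing the quasicircle limit sets $\Lambda_{\rho^{-1}(gP_ig^{-1})}$ inside $\Lambda_{K_0}$ and that no further collapsing takes place, so that, up to a finite-index subgroup, $K$ may be replaced by the geometrically finite $K_0$. Establishing this compatibility (and the attendant finite-index bookkeeping between the virtual statement of Theorem~\ref{thm:Cantor} and an honest free-product realization) is the technical heart of the argument.
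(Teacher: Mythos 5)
Your proposal follows essentially the same route as the paper: split off the two-ended peripherals and blow them up via Theorem~\ref{thm:blowup}, show that $\partial(G,\mathcal{P}_2)$ is a Cantor set, apply Theorem~\ref{thm:Cantor} for the virtual free-product structure, and invoke Corollary~\ref{cor:1or2} (Martin--Skora) for the Kleinian covering. The only divergence is minor: you reach the Cantor-set conclusion through Corollary~\ref{cor:comp} together with the first part of Corollary~\ref{cor:1or2}, whereas the paper argues directly (Theorem~\ref{thm:cantorset}) that a nontrivial component of the blown-up boundary would be a dendrite, contradicting Lemma~\ref{cor:nodendrite} --- both hinge on the same no-dendrite lemma --- and your closing concern about upgrading $K$ to a geometrically finite Kleinian group is a fair one, since the paper's own proof leaves that step implicit as well.
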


Recall from Theorem \ref{thm:blowup}  and Definition \ref{def:parabolicsplit} that if $(G, \mathcal{P})$ is a relatively hyperbolic group pair and  $\mathcal{P}_2$ is the set of non 2-ended subgroups of $\mathcal{P}$, then $(G, \mathcal{P}_2)$ is a relatively hyperbolic group pair and the Bowditch boundary $\partial(G, \mathcal{P}_2)$ is obtained from $\partial(G, \mathcal{P})$ by unpinching the 
parabolic points of $\partial(G, \mathcal{P})$
with two-ended stabilizers. Furthermore in our situation the unpinched boundary $\partial(G, \mathcal{P}_2)$ is also planar. By Corollary \ref{cor:parabschottky}, $G$ is finitely generated.

There are three cases to consider for relatively hyperbolic group pairs with Schottky set boundary.  The first is when the incidence graph has one component. 

\begin{theorem} \label{thm:cantorset} Let $(G, \mathcal{P})$ be a relatively hyperbolic group pair such that $ \partial(G, \mathcal{P})$ is a Schottky set with connected incidence graph.  Let  $\partial (G, \mathcal{P}_2)$ be the relatively hyperbolic group pair where  $\mathcal{P}_2$ consists of the subgroups in $\mathcal{P}$ that are not two-ended. 
Then  $\partial (G, \mathcal{P}_2)$
 is a Cantor set. \end{theorem}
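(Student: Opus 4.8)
The plan is to analyze the boundary $\partial(G,\mathcal{P}')$, which by Theorem~\ref{thm:blowup} is obtained from the Schottky set $\mathcal{S} = \partial(G,\mathcal{P})$ by blowing up (unpinching) all the rank-1 parabolic points, i.e.\ all the points of the form $\bar D_i \cap \bar D_j$ corresponding to edges of $\Gamma(\mathcal{S})$. First I would recall the description of this blown-up boundary from the proof of Theorem~\ref{thm:blowup}: one takes $Y = S^2 \setminus \bigcup_{K \in \mathcal{C}} K$ where $\mathcal{C}$ is the $G$-orbit of a finite collection of pairs of horoballs attached to the rank-1 parabolic points, completes $Y$ with respect to the metric $d_Y$, and the limit set $\hat\Lambda$ of the new action is the closure in $\hat Y$ of $\Lambda_G \cap Y$. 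I want to show $\hat\Lambda$ is a Cantor set: it is compact and metrizable, and since $(G,\mathcal{P}')$ is non-elementary it has no isolated points (being a minimal infinite invariant set under a convergence action, it is perfect). So the entire content is to show $\hat\Lambda$ is \emph{totally disconnected}.

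The cleanest route to total disconnectedness is via Corollary~\ref{cor:1or2}: if the ordinary set $\Omega$ of a geometrically finite convergence group on $S^2$ is non-empty and connected, then the limit set is totally disconnected. So the key step is to show that the ordinary set of $(G,\mathcal{P}')$, realized inside $S^2 \supset \hat Y$, is \emph{connected}. This is exactly where the hypothesis ``$\Gamma(\mathcal{S})$ has one component'' enters: by Corollary~\ref{cor:comp}, the components of the ordinary set of $(G,\mathcal{P}_2) = (G,\mathcal{P}')$ are in bijection with the components of $\Gamma(\mathcal{S})$, and there is exactly one of them. Hence $\Omega$ is connected (and non-empty, since $\Gamma(\mathcal{S})$ is non-empty by condition (S1), so there is at least one disk $D_i$ contributing an ordinary component). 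Applying Corollary~\ref{cor:1or2} then yields that $\partial(G,\mathcal{P}') = \hat\Lambda$ is totally disconnected, hence a Cantor set.

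I should be slightly careful about one point: Corollary~\ref{cor:comp} as stated gives the bijection between components of $\Gamma$ and components of the blown-up ordinary set, and its proof shows each ordinary component contains exactly one graph component; the non-emptiness of $\Omega$ also needs the observation that a Schottky set, having more than one point and satisfying (S1), genuinely has ordinary components surviving the blow-up (the disks $D_i$ are not all collapsed — only the horoballs in $\mathcal{C}$, which sit inside $\Omega_G$ away from the $D_i$, are affected, and indeed a disk $D_i$ gives rise to an ordinary component of $\partial(G,\mathcal{P}')$). The main (minor) obstacle is therefore just assembling these citations correctly and confirming that the blown-up boundary really is the Bowditch boundary of a geometrically finite convergence group on $S^2$ to which Corollary~\ref{cor:1or2} applies — but this is precisely the conclusion of Theorem~\ref{thm:blowup} together with Proposition~\ref{prop:schotplanar}, so no genuinely new argument is needed.

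\begin{proof}
By Theorem~\ref{thm:blowup}, $(G,\mathcal{P}')$ is a geometrically finite convergence group on $S^2$, and by Corollary~\ref{cor:comp} the components of its ordinary set are in bijection with the components of the incidence graph $\Gamma(\mathcal{S})$. Since $\mathcal{S}$ is a topological Schottky set, condition (S1) guarantees that $\Gamma(\mathcal{S})$ is non-empty, so the ordinary set of $(G,\mathcal{P}')$ is non-empty; by hypothesis $\Gamma(\mathcal{S})$ has exactly one component, so this ordinary set is connected. By Corollary~\ref{cor:1or2}, the limit set $\partial(G,\mathcal{P}')$ is therefore totally disconnected. As $(G,\mathcal{P}')$ is non-elementary, $\partial(G,\mathcal{P}')$ is a non-empty perfect compact metrizable space; being also totally disconnected, it is a Cantor set.
\end{proof}
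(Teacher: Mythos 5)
Your proof is correct, and it reaches the conclusion by a different (more modular) route than the paper. The paper's own proof works directly with a hypothetical non-trivial component $L$ of $\partial(G,\mathcal{P}')$: it checks that $L$ is compact, connected and locally connected, and then uses the connectedness of the incidence graph in an ad hoc way to show $L$ contains no simple closed curve (any such curve would bound a disk whose complementary side would produce a second component of $\Gamma(\mathcal{S})$), so $L$ would be a dendrite, contradicting Lemma~\ref{cor:nodendrite}. You instead pass to the ordinary set: Corollary~\ref{cor:comp} gives the bijection between components of $\Gamma(\mathcal{S})$ and components of the blown-up ordinary set, so connectedness of the graph yields a connected non-empty ordinary set, and Corollary~\ref{cor:1or2} then gives total disconnectedness of the limit set. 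Since the proof of Corollary~\ref{cor:1or2} is itself the same dendrite argument, the two proofs rest on the same topological core; what your version buys is that all the topology is outsourced to results already established in the paper (indeed your chain of citations is exactly the one the paper uses in its proof of Theorem~\ref{thm:incidence}), at the cost of invoking the full blow-up machinery of Section~\ref{sec:blow}, which the paper's proof of Theorem~\ref{thm:cantorset} only needs implicitly. Your attention to the minor points — non-emptiness of the ordinary set via (S1), the identification $\mathcal{P}'=\mathcal{P}_2$ via Lemma~\ref{lem:parabschottky}, non-elementarity of $(G,\mathcal{P}')$ giving perfectness, and Brouwer's characterization of the Cantor set — is all sound.
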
 

\begin{proof} 
We will prove that if  $\partial (G, \mathcal{P}_2)$ has a non-trivial component, it is a dendrite. However, this is impossible according to Lemma~\ref{cor:nodendrite}. The theorem will then follow.

Take a component $L$ of  $\partial (G, \mathcal{P}_2)$.  Suppose that $L$ contains at least two points $x$ and $y$.

\begin{itemize} 
\item $L$ is a connected, locally connected compact metrizable space. The component is connected by definition.  
A component $L$ is itself the boundary of a relatively hyperbolic subgroup pair: the subgroup stabilizing $L$ along with the peripheral subgroups whose fixed points belong to $L$ \cite{bowrelhyp}.   Thus $L$ is compact and a metric space.  Furthermore, the set of peripheral subgroups is a subset of $\mathcal{P}_2$, each of whose elements is virtually a closed surface group, by Proposition \ref{prop:parabsubgp}.  Therefore by Bowditch \cite{bowconnect} the boundary $L$ is locally connected. 

\item The component $L$ contains no simple closed curve. Any simple closed curve  bounds two discs in $S^2$ which are either contained in $L$ or not.  At least one must be contained in $L$ as the complementary region would correspond to an additional component of the incidence graph, 
 by Corollary \ref{cor:comp}. This contradicts our assumption that the incidence graph is connected.
If the simple closed curve bounds a disk in $L$, then the boundary has non-empty interior
{which contradicts the last part of Proposition \ref{prop:ordcomp}. }
\end{itemize}

Thus, $L$ should be a dendrite, so we may now conclude that there are no non-trivial components. 
\end{proof} 

\begin{proof}[Proof of Theorem \ref{thm:one}] 
 $\partial (G, \mathcal{P}_2)$ be the relatively hyperbolic group pair where  $\mathcal{P}_2$ consists of the subgroups in $\mathcal{P}$ that are not two-ended.  According to Theorem \ref{thm:cantorset},  the Bowditch boundary of the group pair $(G, \mathcal{P}_2)$ is a Cantor set, hence its ordinary set  on $S^2$ is connected.
 We are now in a position to apply Theorem~\ref{thm:Cantor} and conclude that, in this case, the group $G$ is virtually a free product of infinite cyclic groups and finite index subgroups of peripheral groups, which are virtual surface groups, recall Proposition \ref{prop:parabsubgp} and Corollary \ref{cor:parabschottky}. It follows from Corollary \ref{cor:1or2}  that $G$ is covered by a Kleinian group if it is finitely generated. By Corollary \ref{cor:parabschottky}, $G$ is finitely generated, and the conclusion follows.
\end{proof} 

\begin{remark}
In the same circle of ideas, Otal proves that if $(\mathbb{F},\PP)$ is a free relatively hyperbolic group pair such that its Bowditch boundary is a topological Schottky set, then there exists a handlebody with fundamental group $\mathbb{F}$ and disjoint homotopy classes of simple curves on its boundary that represent the peripheral structure $\PP$ \cite{Otal}.
\end{remark}
\section{More components in the incidence graph} \label{sec:more}

In the previous section, under the hypothesis that $\partial(G, \mathcal{P})$ is a topological Schottky set with connected incidence graph, we determined the structure of the group $G$.
Since the incidence graph has 1, 2, or infinitely many components, we now analyze what happens in the latter two cases. 

\begin{thmtwo}  Let $\mathcal{S}$ be a topological Schottky set with $\mathcal{S} = \partial(G, \mathcal{P})$.
When the incidence graph $\Gamma(\mathcal{S})$  has exactly  2 components $G$ is virtually a closed surface group. \end{thmtwo}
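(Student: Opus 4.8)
The plan is to combine Theorem~\ref{thm:incidence} with the classification of geometrically finite convergence groups whose ordinary set has exactly two components. First I would pass to the blow-up: let $(G,\mathcal{P}')$ be the relatively hyperbolic group pair obtained from $(G,\mathcal{P})$ by unpinching the rank-1 parabolic points, as in Theorem~\ref{thm:blowup}, so that $(G,\mathcal{P}')$ is again a geometrically finite convergence group on $S^2$. By Corollary~\ref{cor:comp} the components of the ordinary set $\Omega_{(G,\mathcal{P}')}$ are in bijection with the components of the incidence graph $\Gamma(\mathcal{S})$, so the hypothesis gives us that $\Omega_{(G,\mathcal{P}')}$ has exactly two components. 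I would then invoke Corollary~\ref{cor:1or2}: when the ordinary set has exactly two components, the limit set $\partial(G,\mathcal{P}') = \Lambda_G$ is a Jordan curve, and $G$ is (isomorphic to) either a Fuchsian group of finite coarea or a degree-2 extension of such a group.

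Next I would argue that in our situation no parabolics survive in $\mathcal{P}'$, so that this Fuchsian (or degree-2 extension) group is in fact cocompact, hence virtually a closed surface group. The point is that the parabolic points of $(G,\mathcal{P})$ are of two types by Lemma~\ref{lem:parabschottky}: the 2-ended ones (corresponding to incidences $\bar D_i \cap \bar D_j$), which were exactly the ones blown up to pass to $\mathcal{P}'$, and those whose stabilizers are closed surface orbifold groups, i.e. accessible from no ordinary component. But if $p$ were a parabolic point of $(G,\mathcal{P}')$ with stabilizer a closed surface orbifold group, then by Proposition~\ref{prop:parabsubgp} its stabilizer would act cocompactly on $S^2 \setminus \{p\}$, so its limit set would be all of $\partial(G,\mathcal{P}') = \Lambda_G$; since $\Lambda_G$ is a circle this forces the stabilizer to be a 2-orbifold group with circle boundary acting properly on the plane $S^2 \setminus \{p\}$, which is impossible for a \emph{closed} surface orbifold group whose boundary should be $S^1$ rather than a point. (Alternatively: a bounded parabolic point on a circle boundary must have a 2-ended stabilizer, since $(\Lambda_G \setminus \{p\})/P$ compact with $\Lambda_G$ an arc forces $P$ to be virtually cyclic.) Hence $\mathcal{P}' = \emptyset$, i.e. $(G,\mathcal{P}')$ is word hyperbolic with circle boundary, so $G$ is virtually a closed surface group by the convergence group theorem of Gabai and Casson--Jungreis, or directly from the last sentence of Corollary~\ref{cor:1or2}.

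The main obstacle I anticipate is the careful handling of the parabolic points that are \emph{not} blown up when passing to $\mathcal{P}'$: one must be sure that the hypothesis ``$\Gamma(\mathcal{S})$ has exactly two components'' genuinely rules out any such point, i.e. that a Schottky set whose incidence graph has two components cannot also carry a surface-orbifold parabolic point. This is where I would lean on Proposition~\ref{prop:parabsubgp} together with the fact that $\Lambda_G$ (after blow-up) is a circle, for a circle has no room for a bounded parabolic point with a large (non-virtually-cyclic) stabilizer; equivalently, a component of $\Gamma(\mathcal{S})$ that is a single vertex $v_i$ with $D_i$ touching no other disk would have to be matched by a surface-orbifold parabolic structure on the ``other side'' of the circle, but a circle has exactly two complementary disks, both already accounted for by the two graph components. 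Once the absence of parabolics is established, the identification of $G$ as virtually a closed surface group is immediate from the Tukia--Gabai--Casson--Jungreis characterization of convergence groups on $S^1$, already packaged in Corollary~\ref{cor:1or2}.
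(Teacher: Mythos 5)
Your proposal is correct and follows essentially the same route as the paper: unpinch the rank-1 parabolics via Theorem~\ref{thm:blowup}, use Corollary~\ref{cor:comp} to identify the two components of the blown-up ordinary set with the two components of $\Gamma(\mathcal{S})$, and then apply Corollary~\ref{cor:1or2} to conclude $G$ is virtually Fuchsian with circle limit set and, since no parabolics remain, virtually a closed surface group. Your explicit argument that a bounded parabolic point on a circle limit set must have two-ended stabilizer (so that $\mathcal{P}'=\emptyset$) is in fact a welcome elaboration of a step the paper leaves terse.
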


\begin{proof} 
We recall that the rank 1 parabolic points in $\mathcal{P}$ correspond to the edges of the incidence graph by Lemma~\ref{lem:parabschottky} and the very definition of the incidence graph.

Then, we unpinch the rank-1 parabolic points as in Theorem \ref{thm:blowup}.  This results in a different geometrically finite action of the group $G$. For every parabolic point removed, the two components of the domain of discontinuity that corresponded to the endpoints of the edge are contained in the same component by  Corollary \ref{cor:comp}.  So when there are no more rank-1 parabolic points, there are two components of the domain of discontinuity. Then by Corollary \ref{cor:1or2}, $G$ is virtually Fuchsian with limit set $S^1$. 

Since we already removed all of the rank-1 parabolic points, 
$G$ is virtually a closed surface group and $P_2 = \emptyset$. 
\end{proof} 

When the incidence graph has infinitely many components, the topology of the blown-up limit set can be extremely varied so there is no hope of getting a meaningful description of the underlying group. 
Indeed, the next theorem shows in particular that the limit set of any finitely generated Kleinian group with infinitely many components in its 
domain of discontinuity and no two-ended parabolic subgroups is isomorphic to the boundary of some $(G,\mathcal P_2)$ obtained by blowing up all the rank-one parabolics of a relatively hyperbolic group $(G,\mathcal P_1\cup \mathcal P_2)$ where $\partial(G,\mathcal P_1\cup \mathcal P_2)$ is a topological Schottky set. 

\begin{thminfty} 
Let $K$ be a geometrically finite Kleinian group with non-empty domain of discontinuity. Then there is a peripheral structure  $\mathcal{P}_{K'}$ on a finite index subgroup $K'$ of $K$, such that $(K', \mathcal{P}_{K'})$  is a relatively hyperbolic group pair and $\partial(K', \mathcal{P}_{K'})$ is a topological Schottky set. 
 Moreover, $\mathcal{P}_{K'}$ contains 
 the natural peripheral structure of the Kleinian group $K'\subset K$.
\end{thminfty}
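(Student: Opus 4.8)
The plan is to build $\mathcal{P}_{K'}$ by enlarging the natural (maximal-parabolic) peripheral structure of $K$ with cyclic loxodromic subgroups dual to a suitable curve system on the conformal boundary, and then to recognise the resulting Bowditch boundary as a topological Schottky set by tracking how the corresponding pinching operation acts on the domain of discontinuity. As a warm-up to fix intuition, note that applied to a Schottky (free, purely loxodromic) $K$ this should recover Otal's handlebody-with-disjoint-curves picture, and applied to a $K$ with totally geodesic boundary it should do nothing (the limit set is already a Schottky carpet).

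\emph{Step 1 (reduction).} By Selberg's lemma pass to a torsion-free finite-index subgroup; we may also assume $K$ is non-elementary (otherwise $\Lambda_K$ has at most two points and there is nothing sensible to prove). Then $\Omega_{K'}/K'$ is a finite disjoint union $S_1\sqcup\cdots\sqcup S_k$ of orientable finite-type hyperbolic surfaces, non-empty by hypothesis, and by Proposition~\ref{prop:planarparabolics} each maximal parabolic is virtually a finite-type surface group.

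\emph{Step 2 (curves and the enlarged structure).} On each $S_j$ fix a maximal system $\mathcal{C}_j$ of pairwise disjoint, pairwise non-isotopic essential simple closed curves that (i) includes every isotopy class which is parabolic in $K'$ (the finitely many ``accidental parabolics'' visible on $S_j$), and (ii) cuts $S_j$ into thrice-punctured spheres; in particular every complementary piece is planar with free fundamental group. Each non-parabolic $\gamma\in\bigcup_j\mathcal{C}_j$ determines a conjugacy class of primitive loxodromic elements of $K'$; let $\mathcal{Q}$ be the family of all $K'$-conjugates of the cyclic groups $\langle\gamma\rangle$, and set $\mathcal{P}_{K'}:=\mathcal{P}^{\mathrm{nat}}_{K'}\cup\mathcal{Q}$. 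Each member of $\mathcal{Q}$ is two-ended, generated by a loxodromic, hence strongly relatively quasiconvex; distinct members have disjoint two-point limit sets (disjointness of the $\mathcal{C}_j$, and curves on different $S_j$ lie in component stabilisers meeting in a discrete set of parabolics), so by Proposition~\ref{suswarup} they intersect trivially. After replacing $K'$ by a further finite-index (characteristic) subgroup we may arrange that no element of $K'$ conjugates one member of $\mathcal{Q}$ onto another except trivially, that $\mathcal{P}_{K'}$ is an almost-malnormal collection, and that no three $K'$-translates of the $\overline{\Omega_j}$ share a point. By the standard theory of relative hyperbolicity (e.g.\ \cite{Osin06}, \cite{Hruskasurvey}), $(K',\mathcal{P}_{K'})$ is again a relatively hyperbolic group pair whose maximal parabolic subgroups are exactly $\mathcal{P}_{K'}$ --- no $\langle\gamma\rangle$ absorbs into a natural parabolic, their limit sets being disjoint --- and its Bowditch boundary $\mathcal{S}:=\partial(K',\mathcal{P}_{K'})$ is the quotient of $\Lambda_{K'}\subset S^2$ obtained by collapsing each $\Lambda_Q$, $Q\in\mathcal{Q}$, to a point. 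Since $\mathcal{P}^{\mathrm{nat}}_{K'}\subseteq\mathcal{P}_{K'}$, the ``moreover'' clause is immediate, and $\mathcal{Q}$ is precisely the new set of rank-one peripherals.

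\emph{Step 3 (it is a topological Schottky set).} It remains to verify that $\mathcal{S}$ satisfies Definition~\ref{def:schottky}. The complement of $\mathcal{S}$ in $S^2$ should be identified (using \cite[Thm.~5.8]{ph:unifplanar}, or a Moore-type collapse, Theorem~\ref{thm:moore}) with a disjoint family of topological disks, one for each lift in $\Omega_{K'}$ of each thrice-punctured-sphere piece of each $S_j$, with the three pinched cuffs of the piece becoming the distinguished incidence points on its boundary circle. Then (S1) follows from finiteness of $k$ and of the $\mathcal{C}_j$; (S2), that each piece closes up to a genuine Jordan domain, is where the inclusion of the accidental-parabolic curves in $\mathcal{C}_j$ is used, so that no leftover cut points survive on the boundary of a piece; (S3)--(S4), that closures meet in at most one point and there are no triple incidences, follow from disjointness of the $\mathcal{C}_j$ together with the finite-index reduction of Step~2, since all coincidences come from curve endpoints, i.e.\ the pinched parabolic points; and (S5), the $E$-set/null-sequence condition, follows from discreteness of $K'$ via Proposition~\ref{prop:nullseq} applied to the orbit of each complementary disk (the limit set of the relatively quasiconvex subgroup stabilising that disk). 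This exhibits $\mathcal{S}$ as a topological Schottky set.

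The main obstacle is Step~3, and within it the honest verification that, after pinching all the translated loxodromic pairs, the complementary pieces are Jordan domains and that the pieces are glued in $S^2$ in the Schottky pattern rather than in some non-planar way. The delicate case is a component $\Omega$ of $\Omega_{K'}$ whose boundary already carries cut points --- $\mathrm{Stab}(\Omega)$ has accidental parabolics, or a rank-one cusp of $K'$ touches $\Omega$ --- so that $\overline\Omega$ is not a disk before any pinching; handling this uniformly is exactly what forces clause (i) of the curve choice and the simultaneous finite-index reduction, and it is where most of the work lies.
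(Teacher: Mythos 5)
There is a genuine gap in Steps 2--3: an arbitrary maximal curve system on the conformal boundary surfaces does not yield a topological Schottky set after pinching. The missing requirement is that the chosen curves, viewed on the boundary of $M_{K'}$ (the quotient of a neighborhood of the convex hull of $\Lambda_{K'}$), must meet every essential annulus and every essential compressing disk of $M_{K'}$; equivalently, the pared manifold obtained by declaring them parabolic must be acylindrical. Your assertion that distinct members of $\mathcal{Q}$ have disjoint two-point limit sets is false without this: two curves that are disjoint and non-isotopic on the surface(s) can be freely homotopic in $M_{K'}$, hence conjugate in $K'$ with a common axis. Concretely: (a) if $K'$ is a quasi-Fuchsian closed surface group and you choose the same pants decomposition on the two sides $S^+$ and $S^-$ of the limit circle, each curve of $\mathcal{C}^+$ shares its axis with the corresponding curve of $\mathcal{C}^-$, and the pinched point lies in the closures of four complementary pieces, violating (S4); (b) if $K'$ is a genus-two handlebody group, a pants decomposition containing the two boundary curves of an embedded essential annulus produces the same violation, while a complementary pair of pants containing a meridian in its interior is not $\pi_1$-injective and its associated piece does not close up to a Jordan domain, violating (S2). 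Passing to a further finite-index subgroup, as you propose, does not repair this: the coincidence of axes is produced by an annulus or disk in the manifold and persists (and changing the group would force you to re-choose the curves in any case).

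This is precisely what the paper's proof is organized around. It works inside $M_{K'}$, takes the JSJ/characteristic submanifold (and a minimal compressing-disk system in the compressible case), and chooses arcs and curves so that every essential annulus and disk is transverse to the system; in particular the two pants decompositions on the two sides of each $I$-bundle piece are required to be transverse to one another, which is exactly what rules out counterexample (a). Having made the pared manifold acylindrical, the paper invokes Thurston's hyperbolization of pared manifolds \cite{Morgan} to realize $(K',\mathcal{P}_{K'})$ by a geometrically finite Kleinian group whose convex core has totally geodesic boundary, so that its limit set is visibly a topological Schottky set and no direct analysis of the Moore quotient of $\Lambda_{K'}$ is needed. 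If you wish to keep your quotient-of-$\Lambda_{K'}$ route, you must (i) add the acylindricality condition on the curve system and prove that such a system exists --- this existence is the real content of the paper's characteristic-submanifold argument --- and (ii) actually verify (S2)--(S4) for the resulting quotient, which your Step 3 currently only sketches and which fails without (i).
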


\begin{figure}[!h]
\begin{center}
\includegraphics[scale=.8]{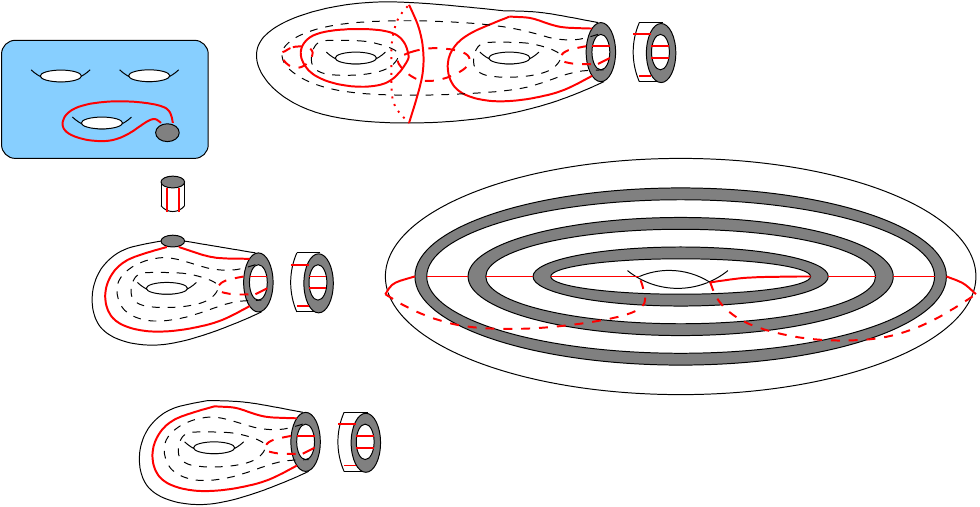}
\caption{An illustration of Theorem~\ref{thm:infty}. The picture represents a manifold $M_{K'}$ and the choice of a peripheral structure that makes it hyperbolic with totally geodesic boundary. A compressing disk separates the manifold into a hyperbolic piece with totally geodesic boundary of genus 3 (represented schematically by the light blue piece at the top left) and another part that coincides with its characteristic sub-manifold. This consists of three one-holed surfaces-times-interval (one of genus 2 and two of genus 1) that are attached to a solid torus along annuli. Thickenings of the compression disk and JSJ annuli are also shown. A disk-and-annuli busting paring is in red. Paring curves on ``interior surfaces" of the three products are dashed, as are the interior surfaces themselves. Different pieces are glued along shaded disks and annuli.}
\end{center}
\end{figure}

\begin{proof} 
We choose $K'$ to be a torsion-free finite-index subgroup of $K$ contained in $PSL(2, \mathbb{C})$.
Below we will define a peripheral structure $\mathcal{P}_{K'}$ of $K'$ that will contain all the parabolic subgroups of $K' < PSL(2, \mathbb{C})$ but will in general be larger. 

In this situation, there is 
an irreducible and orientable manifold with boundary $M_{K'}$ obtained as the quotient of the 1-neighborhood of the convex hull of $\Lambda_{K'}$, the limit set of $K'$, by the action of $K'$.  
There is at least one geometrically finite end, as the group is geometrically finite and its limit set is not all of $S^2$. This manifold comes equipped with a natural pared structure, given by the parabolic structure on $K'$. This realizes the boundary of $M_{K'}$ as a union of connected surfaces with boundary, which corresponds to the rank-1 cusps in the hyperbolic structure.  We will add curves to the peripheral structure so that the resulting pared manifold contains no essential annuli or disks, and thus admits a hyperbolic structure with totally geodesic boundary \cite[Theorem B' page 70]{Morgan}.

We will first consider the case when these surfaces are incompressible. Now, in this situation, $M_{K'}$ admits a JSJ-decomposition along a finite family of pairwise disjoint and non parallel incompressible annuli $A_i$ into ``geometric pieces'' (see \cite{characteristic} for a description): $I$-bundles over surfaces (Seifert fibered pieces) and anannular manifolds with boundary (hyperbolic pieces). By taking a further cover if necessary, that is by taking a further finite-index subgroup, we assume no twisted $I$-bundle appears in the decomposition. Note that a piece can have different structures. For instance, a solid torus can be seen as a circle bundle over a disk, an interval times an annulus, as well as a twisted $I$-bundle over a M\"{o}bius band. We only require each piece to admit some product structure.

The characteristic submanifold $C_{K'}$ in $M_{K'}$ consists of all the surface-times-interval components together with small neighborhoods of the JSJ annuli $A_i$, which are solid tori $T_i$. Note that if $C_{K'}$ is empty, $M_{K'}$ with its natural pared structure admits a hyperbolic metric with totally geodesic boundary perhaps with rank-one cusps so that $\partial(K', \mathcal{P}_{K'})$ is already a topological Schottky set and notably a Sierpi\'nski carpet if $\mathcal{P}_{K'}$ is empty.

Otherwise, we observe that the boundary of each solid torus $T_i$ is partitioned into four annuli: two of them contained in $\partial M_{K'}$ and two others properly embedded in $M_{K'}$ and parallel to $A_i$. For each $T_i$, we mark two points on each of the four circles that delimit the four annuli in its boundary. We then connect these pairs of points with two arcs in  each annulus of $\partial T_i\cap \partial M_{K'}$ running from one circle to the other.

Remark that $\partial C_{K'}\setminus \partial M_{K'}$ consists of properly embedded annuli contained in the boundary of some tori $T_i$. The rest of the boundary $\partial C_{K'}$ in $\partial M_{K'}$ is a union of subsurfaces, possibly with boundary or cusps.  

For each complementary piece of $\partial M_{K'} \setminus \partial C_{K'}$, we connect all the marked points on its boundary components with an embedded collection of essential (pairwise non-parallel) arcs. Next, if some component of $\partial M_{K'}\setminus \cup_i T_i$ is an annulus, (for instance, if a piece is a solid torus) we connect the pair of points on one boundary component directly with the pair of points on the other boundary component. 
Each remaining component of $C_{K'}\setminus \cup_i T_i$ is a surface times an interval $S \times I$. In this case again we first connect the marked points on the boundary circles along an embedded collection of essential arcs in $ S \times I \cap \partial M_{K'}$ (as was done in the complementary components). Then we take a pair of pants decomposition of each remaining component after cutting along these arcs. The pair of pants decomposition for the pieces of $S \times \lbrace 0 \rbrace$ should be different from the decomposition for $S \times \lbrace 1 \rbrace$, in particular, the curves of the pants decomposition for $S \times \lbrace 0 \rbrace$ should be transverse to curves going through $S \times \lbrace 1 \rbrace$. Since there are two arcs meeting at each marked point, the union of these arcs and curves is a collection of curves so that any essential annulus in $\partial C_{K'}$ is transverse to some curve in this collection.  Since these curves are essential and non-parallel, we can make this collection peripheral.  The resulting pared manifold with this peripheral structure will admit a Kleinian representation where the quotient of the $1$-neighborhood of the convex hull of the limit set is a hyperbolic manifold with totally geodesic boundary.  Therefore its limit set can be realized as a Schottky set.

Assume now that $\partial M_{K'}$ is compressible. In this case, the limit set of $K'$ is not connected. We can choose a finite family $\mathcal{D}$ of properly embedded pairwise disjoint essential disks such that (the closure of) each component of the complement of the disks, $M_{K'}\setminus\cup_{D\in{\mathcal{D}}}D$, has incompressible boundary and the family ${\mathcal{D}}$ is minimal with respect to this property. 
As we did with the JSJ-annuli in the previous case, for each disk $D$ we remove small cylindrical neighborhood $C_D$ and mark two points on each of the circles delimiting the two disks on the boundary of $C_D$. We then connect the two pairs of points by two arcs in the annulus contained in $\partial C_D$.
For each component $N$ of $M_{K'}\setminus\cup_{C_D\in{\mathcal{D}}}D$ let us denote $C_N$ the characteristic submanifold of $N$. Note that we can assume that the annuli of the JSJ-decomposition of $N$ are disjoint from the disks of the family ${\mathcal{D}}$. We can now repeat the previous argument keeping in mind that this time we need to connect also the marked points on the boundary of the disks.  
 \end{proof}

\end{document}